\documentclass{amsart}
\usepackage{math}
\usepackage{tikz,mathabx,mathrsfs,mathtools,amsrefs,wasysym,comment,lineno,fancyvrb}
\usetikzlibrary{cd,quotes,angles,decorations,shapes}
\usetikzlibrary{positioning,arrows,automata,calc}

\pgfdeclaredecoration{single line}{initial}{
  \state{initial}[width=\pgfdecoratedpathlength-1sp]{\pgfpathmoveto{\pgfpointorigin}}
  \state{final}{\pgfpathlineto{\pgfpointorigin}}
}


\newtheorem{mainthm}{Theorem}

\newtheorem{remark}{Remark}

\newcommand\Hom{\operatorname{Hom}}
\newcommand{\ZO}{\mathbb{E}} 

\newcommand\Graph{{\mathsf{Graph}}}
\newcommand\gfA{{\mathscr A}}
\newcommand\gfB{{\mathscr B}}
\newcommand\gfC{{\mathscr C}}
\newcommand\gfD{{\mathscr D}}
\newcommand\gfF{{\mathscr F}}
\newcommand\gfG{{\mathscr G}}
\newcommand\gfH{{\mathscr H}}
\newcommand\gfK{{\mathscr K}}
\newcommand\gfL{{\mathscr L}}
\newcommand\gfS{{\mathscr S}}
\newcommand\gfX{{\mathscr X}}
\newcommand\DL[1][p,q]{{\mathsf{DL}(#1)}}

\def\markin{\tikz[baseline=0]{\draw[line width=0.3pt,{Stealth[reversed,scale=0.7]}-] (0,-0.6ex) -- (0,0.6ex);}}
\def\markout{\tikz[baseline=0]{\draw[line width=0.3pt,-{Stealth[scale=0.7]}] (0,0.4ex) -- (0,1.6ex);}}
\def\markinout{\tikz[baseline=0]{\draw[line width=0.3pt,{Stealth[reversed,scale=0.7]}-{Stealth[scale=0.7]}] (0,-0.6ex) -- (0,1.6ex);}}

\newcommand\ellipsesplitnode[6]{\node[draw,ellipse split] (#1) at #2 {\rule{#3}{0pt}\rule{0pt}{#4}};
  \node at ($0.5*(#1.north)+0.5*(#1.center)$) {#5};
  \node at ($0.5*(#1.south)+0.5*(#1.center)$) {#6};}

\newcommand\shadepath[4]{
  \coordinate (s) at (#3);
  \coordinate (t) at (#4);
  \path[shade,bottom color=#1,top color=#2] ($(s)!0.6pt!90:(t)$) -- ($(t)!0.6pt!-90:(s)$) -- ($(t)!0.6pt!90:(s)$) -- ($(s)!0.6pt!-90:(t)$) -- cycle;
}

\begin{document}
\title{Simulations and the Lamplighter group}
\author{Laurent Bartholdi}
\address{Mathematisches Institut, Georg-August Universit\"at zu G\"ottingen\and \'Ecole Normale Sup\'erieure, Lyon} 
\email{laurent.bartholdi@gmail.com}

\author{Ville Salo}
\address{University of Turku, Turku}
\email{vosalo@utu.fi}

\date{January 10th, 2021}
\begin{abstract}
  We introduce a notion of ``simulation'' for labelled graphs, in
  which edges of the simulated graph are realized by regular
  expressions in the simulating graph, and prove that the tiling
  problem (aka ``domino problem'') for the simulating graph is at
  least as difficult as that for the simulated graph.

  We apply this to the Cayley graph of the ``lamplighter group''
  $L=\Z/2\wr\Z$, and more generally to ``Diestel-Leader graphs''. We
  prove that these graphs simulate the plane, and thus deduce that the
  seeded tiling problem is unsolvable on the group $L$.

  We note that $L$ does not contain any plane in its Cayley graph, so
  our undecidability criterion by simulation covers cases not covered
  by Jeandel's criterion based on translation-like action of a product
  of finitely generated infinite groups.

  Our approach to tiling problems is strongly based on categorical
  constructions in graph theory.
\end{abstract}
\maketitle

\section{Introduction}

Let $G$ be a finitely generated group, with finite generating set
$S=S^{-1}$. For a given finite set $A$ and a subset
$\Pi\subseteq A\times S\times A$ called \emph{tileset}, a
\emph{tiling} of $G$ is the choice, for every $g\in G$, of a colour
$\phi_g\in A$ such that neighbouring group elements have matching
colours: for every $g\in G,s\in S$ we have
$(\phi_g,s,\phi_{g s})\in\Pi$.

The \emph{tiling problem} for $G$ asks for an algorithm that, given
$\Pi$, determines whether there exists such a tiling. A useful
variant, the \emph{seeded tiling problem}, asks for an algorithm that,
given $\Pi$ and a seed colour $a_0\in A$, determines whether there
exists a tiling with colour $a_0$ at the origin.

These problems have attracted much attention since
Wang's~\cite{wang:ptpr2} and Berger's~\cite{berger:undecidability}
results that the seeded tiling problem, respectively tiling problem
are unsolvable for $G=\Z^2$.

More generally, groups that are geometrically covered by sufficiently
regular planes also have unsolvable tiling problems; see the next
section for more precise statements. It is also easy to see that
groups with unsolvable word problem have unsolvable tiling
problem. To the best of our knowledge, all groups known up to now to
have unsolvable tiling problem fall in one of these two classes.

On the other hand, straightforward considerations show that the both
tiling problems are solvable for free groups, and more generally
virtually free groups (namely groups with a free subgroup of finite
index).

The main contribution of this article is a proof of undecidability for
a group not covered by these classes, the \emph{lamplighter
  group}. This is the wreath product $L=\Z/2\wr\Z$, and also the group
generated by the affine transformations $a(f)=t f$ and $b(f)=t f+1$ of
the ring $\mathbb F_2[t,t^{-1}]$; it admits as presentation
\[L=\langle a,b\mid (a^n b^{-n})^2\text{ for all }n\ge1\rangle.
\]
This group is not virtually free, and it does not contain
geometrically any plane or hyperbolic plane. We prove:
\begin{mainthm}\label{thm:main}
  The seeded tiling problem for the lamplighter group $L$ is unsolvable.
\end{mainthm}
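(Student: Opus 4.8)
The plan is to reduce the seeded tiling problem for $\Z^2$—shown unsolvable by Wang—to the seeded tiling problem for $L$, by means of a general ``simulation'' mechanism between labelled graphs. There are four steps. First I would make precise what it means for a labelled graph $\gfG$ to \emph{simulate} a labelled graph $\gfH$: roughly, that each edge of $\gfH$ is realized by paths in $\gfG$ whose label words range over a prescribed regular language, together with faithfulness conditions asserting that such paths may only start and end at designated ``image'' vertices and that the image vertices exhaust $\gfG$. Second I would prove the reduction lemma: if $\gfG$ simulates $\gfH$, then the tiling problem for $\gfG$ is at least as hard as that for $\gfH$. Third I would identify the Cayley graph of $L$ with the Diestel--Leader graph $\DL[2,2]$. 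Fourth, and this is where the real work lies, I would construct an explicit simulation of the grid $\Z^2$ inside $\DL[2,2]$. Combining these gives Theorem~\ref{thm:main}.

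For the reduction lemma, the idea is the standard one of ``compiling'' automata into tilesets. Given a tileset $\Pi$ on $\gfH$, I would build a tileset $\Pi'$ on $\gfG$ whose colours record a candidate $\Pi$-colour together with the current states of the finite automata recognizing the regular languages attached to the $\gfH$-edges; the adjacency constraints of $\Pi'$ force each automaton to take a legal transition on the label it reads, and force an accepting state to arise precisely when a path has finished realizing an $\gfH$-edge, at which point the $\Pi$-colours at the two endpoints must match across that edge. Finiteness of the automata makes $\Pi'$ finite and effectively computable from $\Pi$; the distinguished ``image'' state lets a seed for $\gfH$ be converted into a seed for $\gfG$. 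It then remains to check that tilings of $\Pi'$ decode to tilings of $\Pi$ and conversely—the forward direction using that the simulation covers $\gfG$, the backward direction using faithfulness—which yields the desired computable reduction.

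Step three is essentially a dictionary entry: the Cayley graph of $L=\Z/2\wr\Z$ with respect to a suitable generating set is isomorphic to $\DL[2,2]$, the horocyclic product of two $3$-regular trees with Busemann functions adding to zero, a vertex corresponding to a finitely supported lamp configuration (stored in the two trees below the respective horospheres) together with a cursor height. Step four is where I expect the main obstacle. I would realize each of the two generators $e_1,e_2$ of $\Z^2$ by a regular language of $\DL[2,2]$-paths: a unit step in one grid direction by an excursion that climbs in the first tree, makes the branching choices that select which lamp to toggle, descends in the second tree while recording parities, and returns the cursor to its starting height; the other direction by the mirror construction swapping the two trees. Such excursions have unbounded length—this is exactly what forces the use of regular rather than bounded-length realizations, and reflects the absence of an isometrically embedded plane in $L$.

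The hard part will be arranging step four so that three things hold simultaneously: (a) the two path-languages commute \emph{exactly} as composed relations on $\DL[2,2]$, reproducing the defining relation of $\Z^2$ rather than a sheared or only partially defined grid—this demands careful bookkeeping of tree heights and of which branchings are consumed; (b) every vertex of $\DL[2,2]$ lies in the image of the simulation, so the reduction is correct in the forward direction; and (c) the faithfulness conditions hold, so no spurious tilings appear. These are all combinatorial assertions about binary trees and finite lamp configurations, and I expect them to constitute the technical core. Granting them, any decision procedure for the seeded tiling problem of $L\cong\DL[2,2]$ would, via the reduction lemma, decide the seeded tiling problem for $\Z^2$, contradicting Wang's theorem; hence the seeded tiling problem for $L$ is unsolvable. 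The same construction with trees of degrees $p+1$ and $q+1$ would moreover settle the general Diestel--Leader graphs $\DL$ mentioned in the abstract.
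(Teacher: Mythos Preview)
Your high-level strategy matches the paper's: define a simulation via regular-language paths, prove a transfer principle for tiling problems, and exhibit a simulation of $\Z^2$ inside $\gfL$. The reduction lemma you sketch is essentially the paper's Theorem~\ref{thm:simul}. However, your step~four has a genuine gap that is precisely where the paper does its real work.

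You propose to simulate $\Z^2$ using only the bare Cayley-graph structure of $L$---edge labels $a^{\pm1},b^{\pm1}$---and you describe excursions ``climbing in one tree, descending in the other''. But $\gfL$ with only its edge labels is vertex-transitive: a graph-walking automaton reading only $\{a^{\pm1},b^{\pm1}\}$ has no way to detect height, to recognise a ``starting height'' to return to, or to know which branch to take when descending. Your condition~(b), that every vertex of $\gfL$ be an image vertex, makes this worse: a translation-invariant family of regular languages over the generators would have to define a translation-invariant relation on $L$, and there is no such relation producing $\Z^2$. The paper explicitly notes that it has not been able to make such a direct simulation work.

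What the paper does instead is introduce an auxiliary SFT on $L$ (the ``comb'' $\Omega_c$ or the ``sea level'' $\Omega_\aquarius$) whose configurations, once a seed tile is fixed, deposit \emph{landmarks} on $\gfL$: a distinguished $\langle a\rangle$-ray, teeth, height-zero markers, etc. The simulator's automata then read these auxiliary labels, not just the edge labels, and it is these landmarks that let them navigate (e.g.\ ``follow the $s$-signal until you hit an antitooth'' in the comb, or ``climb along $\nwarrow$ until the carry clears'' in the sea level). Only a sparse subset of $L$---the spine and teeth, or the sea-level slice---serve as image vertices; your condition~(b) is neither needed nor true. The seed in the seeded tiling problem is used precisely to force a nontrivial configuration of this auxiliary SFT; the instance tileset is then taken as a product with the auxiliary one. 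This auxiliary-marking idea is the missing ingredient in your plan, and without it your step~four will not go through.
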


We view the tiling problem as a question about marked graphs; in our
setting, the Cayley graph of the group $L$, namely the graph with
vertex set $L$ and an edge between $g$ and $g s$ for all
$s\in\{a^{\pm1},b^{\pm1}\}$.

We prove Theorem~\ref{thm:main} by introducing a notion of
``simulation'': even though $\Z^2$ is not contained geometrically in
$L$, it is contained ``automatically'', in that there is a function
$\Z^2\to L$ which is not Lipschitz but maps edges of $\Z^2$'s Cayley
graph to regular expressions in edges of $L$'s Cayley graph; these
regular expressions are driven by an auxiliary labelling provided to
the Cayley graph of $L$ by a subshift of finite type. This is
sufficient to reduce the seeded tiling problem of $\Z^2$ to that of
$L$, and therefore conclude with the latter's undecidability.

Our construction is fundamentally graph-theoretical. It applies in
particular, with trivial modifications, to all Diestel-Leader graphs
$\DL$; see~\S\ref{ss:DL}; up to subtleties regarding edge labellings,
$\DL[2,2]$ is just the Cayley graph of $L$. We prove in fact:
\begin{mainthm}\label{thm:main2}
  The seeded tiling problem for the Diestel-Leader graph $DL$ is
  unsolvable.
\end{mainthm}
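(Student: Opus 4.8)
The strategy is to reduce the seeded tiling problem of $\Z^2$ to that of an arbitrary Diestel-Leader graph $\DL$, via the notion of simulation introduced earlier in the paper. Since Berger's theorem gives undecidability of the seeded tiling problem for $\Z^2$, and since (by the general simulation lemma we may assume) a simulation of a graph $\gfH$ by a graph $\gfG$ yields a reduction of the seeded tiling problem of $\gfH$ to that of $\gfG$, it suffices to exhibit, for every pair $(p,q)$, a simulation of the plane (the Cayley graph of $\Z^2$ with its four standard generators) inside $\DL[p,q]$.

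**Key steps.** First I would recall the concrete description of $\DL[p,q]$ as a ``horocyclic product'' of a $(p{+}1)$-regular tree $T_p$ and a $(q{+}1)$-regular tree $T_q$: a vertex is a pair of tree vertices whose Busemann heights sum to zero, and the two families of edges come from moving in $T_p$ (changing the $T_p$-coordinate by a parent/child step while doing the opposite in $T_q$) and symmetrically. Next I would set up an auxiliary subshift of finite type on $\DL[p,q]$ that selects, at each vertex, a ``preferred'' descent direction in each tree — i.e. a consistent choice, along each horocycle, of one among the $p$ (resp. $q$) children; concretely this is a $\Z/p$- and $\Z/q$-colouring that is locally constant in the right directions, and such colourings exist because each tree's boundary-minus-a-point is a union of copies of $\Z$-like structures. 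Using this labelling, the two generators of $\Z^2$ are realized as regular expressions: ``go up one step in $T_p$'' is a single edge type, while ``go down one step in $T_p$ following the preferred child'' is another edge type gated by the auxiliary label, and the crucial point is that going up then down-preferred (or down-preferred then up) returns you to a well-defined neighbour, giving a genuine $\Z\times\Z$ grid threaded through $\DL$. I would then verify the axioms of simulation: that the regular expressions for the four generators compose correctly (the defining relations $xy=yx$ of $\Z^2$ are respected because the $T_p$-move and the $T_q$-move are performed on independent coordinates), that distinct plane-vertices map to distinct $\DL$-vertices along any fixed ``sheet'' picked out by the auxiliary SFT, and that the auxiliary labelling itself is describable by a finite tileset so that it can be folded into the reduction.

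**Main obstacle.** The hard part will be constructing the auxiliary subshift of finite type that coherently picks out a plane: one must choose, uniformly across the whole Diestel-Leader graph, a bi-infinite geodesic ``spine'' in each tree factor (equivalently, a point on each tree's boundary) in a way that is both locally checkable and actually realizable by a configuration — and then show that the resulting map $\Z^2\to\DL[p,q]$ is injective on the selected sheet and sends edges to the prescribed regular expressions. Handling $p\ne q$ (so that the two trees have different branching, and the horocyclic balance is asymmetric) without breaking the grid structure, and making sure the ``preferred child'' choices on the two sides do not interfere, is where the bookkeeping is most delicate. Once the simulation is in place, Theorem~\ref{thm:main} follows as the special case $\DL[2,2]$, modulo the already-noted minor discrepancy between $\DL[2,2]$ and the Cayley graph of $L$ coming from edge labellings, which is absorbed by the flexibility of the simulation framework.
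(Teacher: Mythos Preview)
Your outline has a genuine geometric gap. In $\DL$ the two tree coordinates are \emph{not} independent: every edge of the horocyclic product moves one tree coordinate to a child and the other to its parent simultaneously, so there is no ``$T_p$-move'' that leaves the $T_q$-coordinate fixed. In particular, choosing a bi-infinite geodesic in each tree factor and intersecting with the horocycle gives only a copy of $\Z$, not a grid; and the composite ``up, then down-preferred'' move you describe is height-preserving but sends $(v_1,v_2)$ to $(v_1,\text{preferred-sibling}(v_2))$, which is idempotent (or at best periodic of order $q$) rather than a free $\Z$-action. More fundamentally, the paper already records that $\gfL=\DL[2,2]$ contains no Lipschitz-embedded plane, so no scheme that realizes the $\Z^2$-generators by bounded-length paths can succeed; the regular expressions \emph{must} have unbounded length.

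The paper's route is quite different from yours. It marks, via an SFT, a ``sea level'' consisting of all vertices at a fixed height; these are naturally indexed by pairs $(u,v)$ of finite strings over $\{0,\dots,q-1\}$ and $\{0,\dots,p-1\}$, hence by $\N\times\N$ via base-$q$ and base-$p$ expansions. A second SFT (the ``ray'' subshift, generalised in~\eqref{eq:DLRay}) singles out a distinguished bi-infinite path playing the role of $\langle a\rangle$, which synchronises auxiliary binary trees sitting above and below the sea level. The $\Z^2$-generators $\rightarrow$ and $\uparrow$ are then implemented not as local moves but as graph-walking automata that climb into these trees, perform ``add one with carry'' on the string $v$ (respectively $u$), and return to sea level; the carries are what force the paths to be of unbounded length. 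The passage from $\DL[2,2]$ to general $\DL$ is then the easy part: either restrict to the $\{0,1\}\times\{0,1\}$-labelled subgraph, or run the sea-level construction directly with the $p,q$-ary versions of the rules.
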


\noindent We leave as an interesting open problem
\begin{conj}
  The tiling problem of the lamplighter group and Diestel-Leader
  graphs are undecidable.
\end{conj}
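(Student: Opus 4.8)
The plan is to upgrade Theorem~\ref{thm:main2}, and hence Theorem~\ref{thm:main}, from the seeded to the unseeded tiling problem by eliminating the seed in the customary way: replace the \emph{ad hoc} activation of the plane-simulation at the origin by an auxiliary subshift of finite type on $\DL$ (resp.\ on the Cayley graph of $L$) which \emph{forces} the plane-simulating structure to be present in \emph{every} configuration. Concretely, I would try to construct a nonempty SFT $\gfX$ that is ``strongly plane-forcing'': every configuration of $\gfX$ organizes the graph into a nested hierarchy of simulated planar regions --- in the sense of the paper's simulation formalism, so that each region is cut out by regular expressions in the graph's edges --- with each level re-creating the structure at the next scale, so that every configuration contains a full simulated copy of $\Z^2$ onto which an arbitrary $\Z^2$-tileset can be layered. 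This is the lamplighter-group analogue of passing from Wang's seeded construction to the aperiodic tilesets of Berger and Robinson on $\Z^2$.

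Granting such a $\gfX$, the reduction becomes formal. Given a $\Z^2$-tileset $\Pi$, form the $\DL$-tileset $\Pi'$ whose configurations are a configuration of $\gfX$ together with a $\Pi$-tiling of the simulated plane(s) it contains, transporting $\Pi$'s adjacency constraints along the regular-expression edges by the simulation machinery already developed; one has to arrange coherence between scales so that a single global colouring $\Z^2\to A$ pulls back consistently to all levels of the hierarchy, but this should be routine once the hierarchy inside $\gfX$ is coherently defined. Then $\Pi'$ tiles $\DL$ if and only if $\Pi$ tiles $\Z^2$: for the forward direction choose a $\gfX$-configuration and decorate every simulated plane by a fixed valid $\Pi$-tiling; for the backward direction, a tiling of $\Pi'$ restricts to a $\gfX$-configuration, which by the plane-forcing property contains a full simulated $\Z^2$, and the transported constraints there exhibit a valid $\Z^2$-tiling. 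Since the unseeded tiling problem of $\Z^2$ is undecidable by Berger's theorem, so is that of $\DL$, and the same argument with the Cayley graph of $L$ gives the lamplighter case.

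The main obstacle is, of course, the construction of $\gfX$. The Cayley graph of $L$ is ``thin'' --- it contains no geometric plane, which is exactly why the seed is needed in the first place --- so one cannot simply import Robinson's planar tiles; the hierarchy must live on the simulation's regular-expression edges and the SFT-driven labelling, and, crucially, it must be \emph{forced}: there must be no degenerate configuration on which the labelling is trivial and the $\Pi$-constraints become vacuous, as such a configuration would always be tileable and break the reduction. A natural line of attack is the Durand--Romashchenko--Shen fixed-point method: realize $\gfX$ as a self-simulating system of ``macro-tiles'' whose self-reference is routed through the very simulation $\Z^2\to\DL$ constructed in the paper, so that the hierarchy is automatically present and a universal computation can be embedded along the way; this would in fact yield the stronger statement that $\Pi_1$-hard problems, such as non-halting of Turing machines, reduce to the tiling problem of $L$ and of $\DL$. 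Making this self-reference compatible with the regular-expression-edge simulation, and ruling out escape configurations, is where the genuine difficulty lies, and is why we record this only as a conjecture.
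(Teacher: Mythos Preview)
The statement you are asked to prove is recorded in the paper as an open \emph{conjecture}; the paper contains no proof of it. Your proposal is therefore not to be compared against a proof in the paper --- there is none --- and indeed you yourself close by saying ``this is why we record this only as a conjecture.'' What you have written is a research plan, not a proof, and you correctly identify the gap: the entire argument is conditional on the existence of a nonempty SFT $\gfX$ on $\DL$ (or on $\gfL$) that forces a full simulated plane in \emph{every} configuration, with no degenerate ``escape'' configurations. You do not construct $\gfX$; you only sketch that a Durand--Romashchenko--Shen fixed-point scheme, routed through the paper's simulation, might produce it.

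For context, the paper's own discussion (in the subsection on the general tiling problem of the lamplighter group) makes exactly the same assessment: it notes that Robinson-like constructions and fixed-point methods ``seem plausible'' but that ``the group is too disconnected to make a direct implementation possible,'' and it suspects the key missing ingredient is a \emph{strongly aperiodic} tileset for $L$. That is essentially the same obstacle you name under a different label: a plane-forcing SFT with no degenerate configurations would in particular be strongly aperiodic, and conversely a strongly aperiodic tileset is typically the backbone of any self-similar hierarchy of the kind you describe. So your plan and the authors' remarks converge on the same open subproblem; neither resolves it.
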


\subsection{Groups with unsolvable (seeded) tiling problem}
As we mentioned in the introduction, the tiling problem on $G$ is
solvable if $G$ is virtually free~\cite{muller-s:context-free-2}. In fact,
if a tileset $\Pi$ tiles at all, then it admits a ``rational'' tiling,
namely there are regular languages $(R_a\subseteq S^*)_{a\in A}$ such
that $g\in G$ is coloured $a$ if $g$ is in the image of $R_a$ under
the natural evaluation map $S^*\twoheadrightarrow G$. A tiling
algorithm therefore in parallel lists tilings of larger and larger
balls in $G$, and tests $A$-tuples of regular languages in $S^*$ for
valid tilings, and we are guaranteed that one of the processes will
stop. See~\cite{aubrun-barbieri-jeandel:domino}*{Theorem~9.3.37}.
No other groups are known to have solvable tiling problem:
\begin{conj}[Ballier \&\ Stein~\cite{ballier-stein:pgdomino}]\label{conj:bs}
  A finitely generated group has solvable domino problem if and only
  if it is virtually free.
\end{conj}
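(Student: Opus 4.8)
Strictly speaking this is a conjecture, so what follows is a line of attack rather than a proof; note also that the ``if'' direction is already a theorem (Muller--Schupp, together with the rational-tiling argument recalled above), so the plan concerns the contrapositive of the converse: \emph{every finitely generated group that is not virtually free has unsolvable domino problem}.

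First I would reduce to the one-ended case. By Stallings' structure theorem a finitely generated group with more than one end splits nontrivially over a finite subgroup; iterating and invoking accessibility (Dunwoody's theorem, in the finitely presented case) expresses such a group as the fundamental group of a finite graph of groups with finite edge groups and vertex groups that are finite or one-ended. Gluing virtually free groups along finite subgroups yields a virtually free group, so a non-virtually-free group must have a one-ended vertex group in its canonical decomposition. One then needs a monotonicity lemma — ideally phrased inside the simulation formalism of this paper — asserting that the (seeded) domino problem of the ambient group is at least as hard as that of any such vertex group; a restriction-of-tilings argument along the Bass--Serre tree should suffice. The finitely generated, non-finitely-presented inaccessible groups of Dunwoody fall outside this reduction and would have to be treated separately.

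The core of the conjecture is then the assertion that \emph{a one-ended finitely generated group has unsolvable domino problem}, and the natural route opened by the present paper is to prove that every such group \emph{simulates} $\Z^2$: one would equip the Cayley graph with a subshift of finite type whose labels drive regular expressions realizing a planar grid, precisely as is done here for the lamplighter group and the Diestel--Leader graphs — neither of which contains a plane geometrically, yet both of which are one-ended. Concretely, the hoped-for input is that one-endedness forces enough ``thickness'' to produce two transverse families of bi-infinite regular paths meeting in a combinatorially planar pattern; Theorems~\ref{thm:main} and~\ref{thm:main2} are the proof of concept that such a simulation suffices.

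The main obstacle is exactly this last step: no construction is known that manufactures a planar simulation out of one-endedness alone, and torsion, distortion, or exotic isoperimetric behaviour (Grigorchuk-type groups, Thompson's groups, Tarski monsters) may obstruct any naive grid. A realistic intermediate target is therefore to isolate a broad class of one-ended groups with enough self-similar or horocyclic-product structure — covering the Diestel--Leader graphs already treated here and, say, wreath products $F\wr\Z$ and lamplighter-like groups over other bases — for which the grid can be built explicitly, treating the full Ballier--Stein conjecture as the long-term goal it is.
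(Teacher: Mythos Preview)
The paper does not prove this statement: it is quoted as an open conjecture and remains so. You correctly recognise this and offer a programme rather than a proof, so there is nothing in the paper to compare against at the level of an actual argument.

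Your Stallings--Dunwoody reduction to the one-ended case is the natural first move, and the monotonicity you need is already in the paper's list of elementary facts: vertex groups in a graph-of-groups splitting are genuine subgroups, and a finitely generated subgroup with unsolvable tiling problem forces the ambient group to have one too. No Bass--Serre gymnastics are required.

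There is, however, a real gap between what you propose and what the paper's machinery delivers. The simulation framework here, and Theorems~\ref{thm:main} and~\ref{thm:main2} that you cite as proof of concept, establish undecidability only of the \emph{seeded} tiling problem. The paper explicitly leaves the unseeded domino problem on the lamplighter group open (see the conjecture immediately following Theorem~\ref{thm:main2} and the discussion in \S1.2). The reason is structural: simulation requires a pheromone layer from an auxiliary SFT, and one must force that layer to be nontrivial; the seed does this, but without it the all-trivial configuration may survive. So even if your hoped-for step---every one-ended group simulates $\Z^2$---went through, you would obtain at best a seeded version of Ballier--Stein. Closing the gap from seeded to unseeded needs a Robinson-style mechanism (aperiodic tile sets that force the simulated grid to appear in every configuration, cf.\ Example~\ref{ex:Robinson}), and no such mechanism is known even for the lamplighter group, let alone for arbitrary one-ended groups. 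Your programme should acknowledge this as a second major obstacle, on par with constructing the simulation itself.
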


There has been, since 2013 (when~\cite{ballier-stein:pgdomino}
appeared as a preprint), continuous progress towards
Conjecture~\ref{conj:bs}; here is a brief list of elementary results,
in which all groups are assumed to be finitely generated.
\begin{itemize}
\item If $G$ has unsolvable word problem (namely there is no algorithm
  determining, with input a word $w$ in $G$'s generators whether $w=1$
  holds in $G$), then there is \emph{a fortiori} no algorithm
  determining whether a tileset tiles;
  see~\cite{aubrun-barbieri-jeandel:domino}*{Theorem~9.3.28}.
\item The fundamental results of Wang and Berger show that $\Z^2$ has
  unsolvable tiling problem.
\item If $G,H$ are commensurable (meaning they have finite-index
  isomorphic subgroups) then $G$ has solvable tiling problem if and
  only if $H$ does.
\item If $H$ is a finitely generated subgroup of $G$ with unsolvable
  tiling problem, then $G$'s tiling problem is also unsolvable;
  see~\cite{aubrun-barbieri-jeandel:domino}*{Proposition~9.3.30}.
\item If $N$ is a normal finitely generated subgroup of $G$ and $G/N$
  has unsolvable tiling problem, then so does $G$;
  see~\cite{aubrun-barbieri-jeandel:domino}*{Proposition~9.3.32}.
\end{itemize}

Recall that two finitely generated groups $G,H$ are
\emph{quasi-isometric} if there are Lipschitz maps $G\to H$ and
$H\to G$ whose compositions are at bounded sup-distance from the
identity. Cohen proves that the tiling problem is geometric in the
following sense:
\begin{thm}[\cite{cohen:geometrysubshifts}]\label{thm:qi}
  If $G,H$ are finitely presented and quasi-isometric, then $G$ has
  solvable tiling problem if and only if $H$ does.
\end{thm}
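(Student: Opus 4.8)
The plan is to exploit the symmetry of the quasi-isometry relation (one may freely interchange $G$ and $H$) and prove just one implication: if $H$ has solvable tiling problem, then so does $G$. I would fix once and for all a quasi-isometry $f\colon H\to G$, with multiplicative constant $\lambda$, additive constant $C$, and such that $f(H)$ is $D$-dense in $G$; since $\lambda,C,D$ depend only on the pair $(G,H)$ and not on any tileset, I may treat them as constants hard-wired into the algorithm. Given a tileset $\Pi$ over $G$ (colours $A$, generators $S_G$), the goal is to \emph{effectively} build a tileset $\widehat\Pi$ over $H$ such that $\Pi$ tiles $G$ if and only if $\widehat\Pi$ tiles $H$; feeding $\widehat\Pi$ to the assumed decision procedure for $H$ then decides the tiling problem for $\Pi$, and the symmetric construction (now using the presentation of $G$) handles the reverse implication.

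A colour of $\widehat\Pi$ at a vertex $h$ would record three things: (i) for each generator $t\in S_H$ an ``increment'' lying in a fixed finite ball of $G$, meant to describe the discrete derivative of a map $\phi\colon H\to G$; (ii) the restriction to $B_G(\phi(h),D)$ of a putative $\Pi$-tiling $c\colon G\to A$; and (iii) auxiliary ``territory'' data whose purpose is to certify that $\phi$ is a genuine quasi-isometry. The adjacency rules of $\widehat\Pi$ would enforce that the increments are cocycle-consistent — around every relator of $H$ the telescoped product of increments equals $1$ in $G$, which is a finite list of bounded-radius constraints \emph{precisely because $H$ is finitely presented}, and which forces the increments to integrate to a well-defined $\lambda$-Lipschitz map $\phi\colon H\to G$ — that the partial $G$-tilings carried by $H$-adjacent vertices agree on their (large) overlap and are internally $\Pi$-valid, and that the territory data witnesses both the quasi-isometric lower bound and the $D$-coarse-surjectivity of $\phi$.

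With these rules in place I would verify the equivalence as follows. If $c$ tiles $G$, then a discretization of the fixed quasi-isometry $f\colon H\to G$ — which automatically meets all the metric requirements — together with the pulled-back data $c|_{B_G(f(h),D)}$ yields a legal $\widehat\Pi$-tiling of $H$. Conversely, from a $\widehat\Pi$-tiling of $H$ one extracts a well-defined $\lambda$-Lipschitz map $\phi\colon H\to G$ which, thanks to the territory data, is a $(\lambda,C)$-quasi-isometric embedding with $D$-dense image; hence every $g\in G$ lies in $B_G(\phi(h),D)$ for some $h$, and because $\phi$ is a quasi-isometry the set of such $h$ is confined to a $G$-bounded, hence $H$-bounded, neighbourhood, so the locally imposed agreement of the recorded partial tilings propagates along short $H$-paths to a single globally consistent, $\Pi$-valid colouring of $G$.

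The hard part is item (iii): turning ``$\phi$ is a genuine quasi-isometry'', an intrinsically global and coarse property, into finitely many bounded-radius adjacency constraints — and this is exactly why the hypothesis demands finitely presented groups rather than merely finitely generated ones, each implication consuming one of the two presentations. Finite presentation already makes the well-definedness of $\phi$ (a cocycle/coboundary condition) locally checkable; making \emph{properness} (the quasi-isometric lower bound) and \emph{coarse surjectivity} locally checkable is the delicate point, and I expect it to require a hierarchical marker-and-territory construction whose bounded-radius consistency — essentially that of a partition of $H$ indexed by $G$, with uniformly bounded parts centred near the points $\phi(h)$ — certifies that $\phi$ neither collapses $G$-far points nor omits $G$-regions. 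Once that combinatorial gadget is built, the remainder is routine bookkeeping with the constants $\lambda,C,D$.
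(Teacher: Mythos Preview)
The paper does not prove this theorem; it is quoted in the introductory survey as a result of Cohen~\cite{cohen:geometrysubshifts}, with no argument supplied. There is thus no ``paper's own proof'' to compare against.

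On its own merits, your outline has the right overall shape---encode on $H$ a $G$-valued $1$-cocycle (which, thanks to the finite presentation of $H$, integrates to a Lipschitz map $\phi\colon H\to G$) together with overlapping local $G$-patches that are internally $\Pi$-valid and mutually consistent---but by your own admission the decisive step is missing. Without item~(iii) the backward implication simply fails: a $\widehat\Pi$-tiling of $H$ in which the cocycle is trivial (so $\phi$ is constant) records nothing more than a single valid $R$-ball pattern in $G$, which is no evidence that $\Pi$ tiles all of $G$. You flag this and propose ``territory data'' amounting to a partition of $H$ into uniformly bounded pieces indexed by $G$; but $G$ is infinite, so the index cannot live in a finite alphabet, and once you pass to the \emph{relative} index you are back to exactly the cocycle data you already carry, with ``bounded fibres'' now the non-local assertion that $\phi$ is proper. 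Your sketch does not explain how to make that local, and ``I expect it to require a hierarchical marker construction'' is a statement of hope, not a proof.

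So the gap you identify is real and is precisely where the content of the theorem lies; the rest genuinely is bookkeeping, but that step is not. To see how it is actually handled you would have to consult Cohen's paper.
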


Recall next that a finitely generated group $H$ acts
\emph{translation-like} on $G$ if the action is free and by self-maps
of $G$ at bounded sup-distance from the identity. This notion was
introduced by Whyte~\cite{whyte:amenability};
Seward~\cite{seward:burnside} proved that a finitely generated group
is infinite if and only if it admits a translation-like action of
$\Z$.
\begin{thm}[\cite{jeandel:translation}*{Theorem~3}]\label{thm:trans}
  If $H$ is finitely presented, has unsolvable tiling problem, and
  acts translation-like on $G$, then $G$ has unsolvable tiling
  problem.
\end{thm}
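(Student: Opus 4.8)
The plan is to reduce the tiling problem of $H$ to that of $G$ by a construction uniform in the input tileset. Fix a finite presentation $H=\langle S_H\mid\mathcal R\rangle$ with $\mathcal R$ finite, a finite generating set $S_G$ of $G$, and --- using that the action moves every point of $G$ by a uniformly bounded amount and that $S_H$ is finite --- a constant $R$ with $d_G(g,g\cdot s)\le R$ for all $g\in G$ and $s\in S_H$. Given a tileset $\Pi_H\subseteq A\times S_H\times A$ on $H$, I build a tileset $\Pi_G$ on $G$ over the finite alphabet $A\times\prod_{s\in S_H}B_G(R)$, where $B_G(R)$ is the ball of radius $R$ around the identity of $G$: a colour of $G$ records a colour $c\in A$ together with, for each $s\in S_H$, a ``pointer'' $\delta_s\in B_G(R)$, to be thought of as the displacement $g^{-1}(g\cdot s)$ of $g$ under the action of $s$. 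Writing $g\bullet s$ for the vertex $g\cdot\delta_s(g)$ reached from $g$ along its $s$-pointer, the constraints imposed by $\Pi_G$ are: \emph{(i)} inverse coherence, $\delta_{s^{-1}}(g\bullet s)=\delta_s(g)^{-1}$; \emph{(ii)} relator coherence, $(\cdots(g\bullet s_1)\bullet\cdots)\bullet s_k=g$ for every relator $s_1\cdots s_k\in\mathcal R$; and \emph{(iii)} $\Pi_H$-compatibility, $(c(g),s,c(g\bullet s))\in\Pi_H$ for all $s\in S_H$. These are conditions of bounded radius (depending only on $R$ and the lengths of the relators in $\mathcal R$), and hence define a genuine nearest-neighbour tileset after the standard recoding of finite-radius tilesets on a Cayley graph; this is exactly where the finite presentability of $H$ enters, to keep the radius finite.

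Next I would show that $\Pi_G$ tiles $G$ if and only if $\Pi_H$ tiles $H$. For ``only if'', given a $\Pi_G$-tiling of $G$ with data $(c,(\delta_s))$, conditions (i) and (ii) say precisely that the maps $g\mapsto g\bullet s$ are mutually inverse bijections of $G$ under which all the relators of $H$ act trivially; by the universal property of the presentation they assemble into an action of $H$ on $G$. Then for any point $g_0\in G$ the function $\psi\colon H\to A$, $\psi(h):=c(g_0\bullet h)$, is by (iii) a valid $\Pi_H$-tiling of the Cayley graph of $H$. (Freeness of this auxiliary action is not needed here --- and indeed could not be forced by local rules.) For ``if'', I use the \emph{given} translation-like action: choose a representative $g_O$ in each of its orbits, let $h(g)$ be the unique element of $H$ with $g=g_O\cdot h(g)$ --- here freeness is essential, to make $h(g)$ well defined --- and set $c(g):=\psi(h(g))$ and $\delta_s(g):=g^{-1}(g\cdot s)\in B_G(R)$. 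Conditions (i) and (ii) then hold because $\cdot$ is a group action, and (iii) holds because $h(g\cdot s)=h(g)s$ and $\psi$ is a $\Pi_H$-tiling.

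Finally I would note that for fixed $R$ the map $\Pi_H\mapsto\Pi_G$ is computable, so the tiling problem of $H$ reduces to that of $G$; as $H$ has unsolvable tiling problem, so does $G$. The one genuinely delicate point is that the constant $R$, although it exists, need not be effectively obtainable from the translation-like action; this is harmless, because to contradict the \emph{solvability} of $G$'s tiling problem one only needs a decision procedure for $H$'s to \emph{exist}, and for the correct value of $R$ --- whatever it is --- ``build $\Pi_G$, then run the hypothetical decider for $G$'' is such a procedure. I expect the bulk of a full proof to be bookkeeping: stating the bounded-radius constraints (i)--(iii) precisely and carrying out the recoding to nearest-neighbour form. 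The conceptual content is the pair of observations that finite presentation makes ``the pointers define an $H$-action'' a local condition, while freeness makes the pullback of a tiling to an orbit well defined.
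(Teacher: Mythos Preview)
The paper does not give a proof of this theorem: it is quoted as Theorem~3 of Jeandel~\cite{jeandel:translation} and used as a black box, so there is no ``paper's own proof'' to compare against. Your sketch is a faithful reconstruction of the standard argument (and, to the best of my knowledge, essentially Jeandel's): encode at each $g\in G$ a colour from $A$ together with bounded-length pointers $\delta_s\in B_G(R)$ for each $s\in S_H$, and impose the three local conditions you list. The two directions of the equivalence are handled correctly --- in particular you are right that freeness of the \emph{reconstructed} $H$-action is irrelevant for extracting a $\Pi_H$-tiling, while freeness of the \emph{given} translation-like action is what makes the converse go through --- and your remark that non-effectiveness of $R$ is harmless for an unsolvability statement is exactly the right observation. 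I see no gaps.
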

Thus every group containing a subgroup of the form $H_1\times H_2$
with $H_1,H_2$ infinite and finitely generated has unsolvable tiling problem. This applies in
particular to ``branched groups'' such as the first Grigorchuk group.
The current ``state of the art'' also includes the following groups as
having unsolvable tiling problem:
\begin{itemize}
\item Non-virtually-cyclic virtually
  nilpotent~\cite{ballier-stein:pgdomino} and more generally virtually
  polycyclic groups~\cite{jeandel:polycyclic};
  see~\cite{aubrun-barbieri-jeandel:domino}*{Theorem~9.3.43}.
\item Baumslag-Solitar groups~\cite{aubrun-kari:bs};
  see~\cite{aubrun-barbieri-jeandel:domino}*{Theorem~9.3.47}.
\item Fundamental groups of closed
  surfaces~\cite{aubrun-barbieri-moutot:dominosurface}.
\end{itemize}
  
Note that all the above examples geometrically contain a plane, either
Euclidean or hyperbolic, in their Cayley graph. In fact, Ballier and
Stein prove more generally that if $G$ is not virtually cyclic but
contains an infinite cyclic central subgroup then its tiling problem
is unsolvable.

We have not yet addressed an important, related question: given a
group $G$, does there exist a finite tileset that admits tilings, but
only aperiodic ones? There are several variants to this question, in
particular a \emph{weakly aperiodic} tileset is such that all tilings
have infinite orbits (equivalently: infinite-index stabilizer) under
translation by $G$, a \emph{strongly aperiodic} tileset is such that
all tilings have trivial stabilizer.

These notions are equivalent for $\Z^2$, but differ in general: Cohen
constructs in~\cite{cohen:lamplighters} a weakly aperiodic tileset for
the lamplighter group $L$, but no strongly aperiodic tileset is known
for $L$.

\subsection{The general tiling problem of the lamplighter group}

It remains open whether the (unseeded) tiling problem of the
lamplighter group is solvable.  All the standard methods used to prove
the tiling problem undecidable on the plane and other finitely
generated groups seem plausible on the lamplighter group: the
lamplighter group is residually finite and one can imagine tiling it
by ``macro-tiles'' that form another copy of the lamplighter (or
related) group, and one could try a Robinson-like construction or the
fixed-point methods for this.  However, the group is too disconnected
to make a direct implementation possible.  To implement this method,
we suspect the obstacle to overcome is to find a strongly aperiodic
tileset.

Another approach is the transducer method, which applies to groups
equipped with a homomorphism onto $\Z$ (``indicable groups''). On each
fibre, we write numbers from a finite set, their average density
represents a real number, and a transducer verifies that consecutive
fibres have a density related by a piecewise-affine map without
periodic points. This strategy succeeds on Baumslag-Solitar
groups~\cite{aubrun-kari:bs}; on the lamplighter group, it cannot work
directly, because the fibres are disconnected as graphs (no matter
what the generating set is). It would be interesting to understand
which $\Z$-subshifts have a sofic pullback to the lamplighter group
along the natural homomorphism. We show in
Proposition~\ref{prop:sealevelSofic} that the sunny-side-up on $\Z$
has a sofic lift, while for the transducer method it seems one would
need to lift a full shift (though it is not clear this is sufficient).

\subsection{Simulations}
We consider the (seeded) tiling problem in the context of edge- and
vertex-labelled graphs $\gfG$; when applied to a group
$G=\langle S\rangle$, the graph we consider is the Cayley graph of
$G$, which has one vertex per group element and an edge labelled $s$
from $g$ to $g s$ for all $g\in G$ and all $s\in S$. In the case of a
seeded tiling problem, the vertex $1\in G$ furthermore has a special
marking. A tileset is then a collection of allowable colourings of
vertices and their abutting edges and neighbours.

Our strategy to prove that a graph $\gfG$ has unsolvable tiling
problem can be summarized as follows: define a preorder relation
``simulate'' on a class of graphs containing $\gfG$, and then
\begin{enumerate}
\item show that if a graph $\gfA$ simulates $\gfB$ and $\gfB$ has
  unsolvable tiling problem, then so does $\gfA$;
\item choose a graph $\gfH$ with unsolvable tiling problem;
\item show that $\gfG$ simulates $\gfH$.
\end{enumerate}

This is uninteresting if ``simulate'' is the equality relation; the
coarser ``simulate'' is, the harder (1) becomes and the easier (3)
becomes. Taking ``simulate'' to mean ``admits a translation-like
action by'' interprets Theorem~\ref{thm:trans} in this context.

We propose a more general notion of ``simulation'', in which the edges
of the simulated graph are not paths of bounded length in the
simulating graph (as would be the case in a translation-like action)
but rather are given by regular expressions, or more appropriately by
graph-walking finite state automata. These automata simulate edges by
following edge and vertex labellings on the simulating graph, which
typically is drawn by a legal tiling over an auxiliary fixed tile
set. An appropriate image, back to the context of groups, is that when
$G$ simulates $H$, there is for every generator $t$ of $H$ an ant that
reacts to pheromones deposited on the Cayley graph of $G$; when
started at a vertex representing an element $h\in H$, it will move
according to its finite state table and pheromone inputs until it
reaches a stop state, which will then represent $h t\in H$.

The ``automaton'' or ``ant'' picture is useful to describe actual
simulations, but is better expressed in the more formal setting of a
``bigraph'', namely a graph equipped with two graph morphisms towards
finite graphs. These should be thought of as generalizations of graph
morphisms (in case one of the morphisms is the identity).

\noindent We realize our plan, towards the proof of
Theorem~\ref{thm:main}, as follows:
\begin{enumerate}
\item is proven as Theorem~\ref{thm:simul} and Theorem~\ref{thm:simulfamily};
\item we use the quandrant $\N^2$ or the plane $\Z^2$;
\item is proven in two different manners: using a ``comb'', in
  Proposition~\ref{prop:combsimul}, or a ``sea level'', in
  Proposition~\ref{prop:seasimul}. 
\end{enumerate}
Both of the simulations require a marking of $L$ by tiles from given
``pheromone'' tileset (whose size we could bring down to $6$ tiles in
the case of the ``comb''). We have proven, therefore, that the Cayley
graph of $L$, when decorated by this tileset, has unsolvable tiling
problem. However, this immediately implies that $L$ itself also has
unsolvable seeded tiling problem, since the ``pheromone'' tileset can
be combined to the instance to be solved. We need a seed tile so as to
ensure that there is a non-trivial decoration from the ``pheromone''
tileset; indeed we have not been able to find a tile set on $L$ \emph{all}
of whose configurations simulate a quandrant or $\Z^2$.

\begin{proof}[Summary of the proof of Theorem~\ref{thm:main}]
  Let $\Pi$ be an instance of a tiling problem for the quadrant. The
  ``sea level'' is a tiling problem $\Pi_{\aquarius}$ for $L$, which
  (after specifying a seed tile) has a unique solution. The grid may
  be simulated using the labels on $L$ coming from this tiling,
  leading to a tiling problem $\Pi'$ for $L$. If $L$'s seeded tiling
  problem were solvable, it would in particular be solvable for the
  product of $\Pi_{\aquarius}$ and $\Pi'$, and therefore lead to a
  solution of the tiling problem $\Pi$ on the quadrant, a
  contradiction.

  We could, alternatively, use the ``comb'' tiling problem instead of
  the ``sea level''. Then the induced labelling on $L$ is not unique,
  and forms a family of tilings, some of which simulate grids. The
  proof would then proceed identically.
\end{proof}

The reason we exhibit two different constructions is that each has its
own additional benefits. The comb gives a better proof in terms of
conciseness: the proof is shorter, and indeed Section~\ref{ss:comb}
(skipping Proposition~\ref{prop:combsimul} and
Corollary~\ref{cor:firstproof}) give a self-contained proof of the
main theorem using the comb SFT.

On the other hand, the sea level SFT admits a unique seeded
configuration which simplifies the application of our simulation
theorem to it), and is more efficient (an $n\times n$ grid requires
roughly $\log(n)n^2$ vertices in the lamplighter graph). In the
process, we shall learn more about the ``SFT-accessible geometry'' of
the lamplighter group: recalling that the lamplighter group is a split
extension $\mathbb F_2[a,a^{-1}]\rtimes\langle a\rangle$, we show that
the one-point compactified actions on the coset spaces of the
subgroups $\mathbb F_2[a,a^{-1}]$ and $\langle a \rangle$ are sofic
shifts, and the sea level SFT is obtained by combining their covering
SFTs with a simple rule. The sea level construction also does not
require the rigidity of a vertex-transitive graph, and with minor
modifications applies to the Diestel-Leader graphs $\DL$. We feel
these ideas are more likely to be useful in further work on the group
than the ad hoc tricks used for the comb.

\subsection{Acknowledgments}
We are grateful to the UMPA and LIP at the \'Ecole Normale
Sup\'erieure, Lyon for its hospitality, and in particular to Nathalie
Aubrun for her interest and support.

\section{Graphs and their simulations}
We begin by some general notions on graphs. They rely heavily on
categorical ideas, but we have taken care to explain them in direct
terms, only hinting at their abstract origin. The reader who is
unfamiliar with categorical language is strongly encouraged to ignore
every sentence that matches `\texttt{.*[Cc]ategor[iy].*}'.

\subsection{Graphs}
\begin{defn}
  A \emph{graph} is a set $\gfG=V\sqcup E$ partitioned into
  \emph{vertices} and \emph{edges}, with two maps
  $e\mapsto e^\pm\colon E\to V$; we call $e^+$ the \emph{head} of $e$
  and call $e^-$ its \emph{tail}. We write $V(\gfG)$ and $E(\gfG)$ for
  the vertices and edges of a graph $\gfG$.

  Graphs are born oriented. An \emph{unoriented} graph is a graph
  endowed with an involution $e\mapsto e'\colon E\to E$ on its edges
  called the \emph{reversal}, satisfying $(e')^+=e^-$ for all
  $e\in E$.

  The \emph{closure} of an edge $e\in E$ is the subgraph
  $\overline e=\{e,e^+,e^-\}$ in the oriented case and
  $\overline e=\{e,e',e^+,e^-\}$ in the unoriented case, and the closure of a
  vertex $v\in V$ is $\overline v=\{v\}$.
  
  A \emph{morphism of graphs} is a pair of maps between their vertices
  and edges respectively, which interlace the head, tail and (in the
  unoriented case) reversal operations. We write
  $\phi\colon\gfG\to\gfH$ a morphism, using the same symbol for the
  map on vertices and on edges. A morphism $\phi\colon\gfG\to\gfH$ is
  \emph{\'etale} if it is locally injective: if two edges
  $e,f\in E(\gfG)$ satisfy $\phi(e)=\phi(f)$ and ($e^-=f^-$ or
  $e^+=f^+$) then $e=f$. A graph is \emph{weakly \'etale} if all
  edges which have the same label and share an endpoint actually
  share both endpoints.
\end{defn}

Graphs, with graph morphisms, form a category $\Graph$. In the next
sections, we shall display important properties of this category. We
write $\Graph(\gfG,\gfH)$ the set of morphisms $\gfG\to\gfH$, or just
$\Hom(\gfG,\gfH)$ if we don't want to precise the subcategory of
$\Graph$ under consideration.

\subsection{Labelled graphs}
Consider a graph $\gfG$, with labels on its vertices and edges. We
thus have maps $\alpha\colon V(\gfG)\to A$ and
$\alpha\colon E(\gfG)\to C$ to label sets $A,C$. Without loss of
generality, we may assume that every label $c\in C$ ``knows'' the
label of its extremities, so we have maps
$c\mapsto c^{\pm}\colon C\to A$. Indeed at worst replace $C$ by
$A\times C\times A$ and extend the edge labelling to a map
$E(\gfG)\to A\times C\times A$ by
$e\mapsto(\alpha(e^-),\alpha(e),\alpha(e^+))$.

Thus a labelling of $\gfG$ is nothing but a graph morphism
$\alpha\colon\gfG\to A\sqcup C$. From now on we shall write
$\alpha\colon\gfG\to\gfA$ for a labelling on $\gfG$, with
$\gfA=A\sqcup C$.

Again, in the unoriented case, we assume that there is an involution
on $C$ compatible with the edge involution on $\gfG$.

\begin{exple}\label{ex:cayley}
  Let $G$ be a group with generating set $S$. An example we shall
  return to in much more detail is the \emph{Cayley graph} of $G$: it
  is the graph $\gfC=G\sqcup(G\times S)$ with $(g,s)^-=g$ and
  $(g,s)^+=g s$. In case $S=S^{-1}$, we also have an edge reversal
  given by $(g,s)'=(g s,s^{-1})$.

  The Cayley graph is naturally an edge labelled graph, in which the
  edge $(g,s)$ has label $s$. We express this as a map
  $\gfC(G,S)\to 1\sqcup S$, where $1\sqcup S$ is the graph with one
  vertex and $S$ loops at it.
\end{exple}

\begin{exple}
  A bipartite graph is a graph given with a partition of its vertices
  into two parts $V_0\sqcup V_1$, and such that all edges cross
  between $V_0$ and $V_1$.

  A bipartite graph is then naturally a graph equipped with a
  labelling to the segment, namely to the graph with vertex set
  $\{0,1\}$ and two edges $e,e'$ with $e^+=(e')^-=1$ and
  $e^-=(e')^+=0$.
\end{exple}

\begin{exple}\label{ex:ssu}
  Let $G$ be a monoid with generating set $S$. The element $1\in G$
  may be distinguished, leading to a \emph{rooted Cayley graph}, or
  \emph{sunny side up}; this last terminology because one vertex is
  marked as \textsf{vitellus} (yolk) and all the others as
  \textsf{albumen} (white). Thus the rooted Cayley graph is expressed
  by a map
  $\mathscr
  C(G,S)\to\{\mathsf{vitellus},\mathsf{albumen}\}\sqcup(\{\mathsf{vitellus},\mathsf{albumen}\}\times S\times\{\mathsf{vitellus},\mathsf{albumen}\})$, with $(x,s,y)^-=x$ and $(x,s,y)^+=y$; namely the map defined on
  vertices by $1\mapsto\mathsf{vitellus}$ and
  $g\neq1\mapsto\mathsf{albumen}$, and likewise on edges.
\end{exple}

In categorical language, the $\gfA$-labelled graphs, namely the graphs
with labelling by a graph $\gfA$, form the \emph{slice} category
$\Graph_{/\gfA}$; this is the category whose objects are morphisms
$\alpha\colon\gfG\to\gfA$ and whose morphisms between
$\alpha\colon\gfG\to\gfA$ and $\beta\colon\gfH\to\gfA$ are usual graph
morphisms $\gamma\colon\gfG\to\gfH$ satisfying $\alpha=\beta\circ\gamma$.

The \emph{simplification} of a labelled graph $\gfG$ is the graph
where all edges $e$ with the same endpoints and label are merged into
a single edge, and the \emph{full simplification} is obtained from the
simplification by also removing all self-loops.  Note that a graph is
weakly \'etale if and only if its simplification is \'etale.

\subsection{Pullbacks and exponentials}
Consider $\gfG_1,\gfG_2$ two $\gfA$-labelled graphs, namely graphs
equipped with labellings $\alpha_i\colon\gfG_i\to\gfA$. Their
\emph{pullback} is the graph
\[\gfG_1\times_\gfA\gfG_2=\{(u_1,u_2)\in\gfG_1\times\gfG_2:\alpha_1(v_1)=\alpha_2(v_2)\},\]
labelled by $\alpha(u_1,u_2)=\alpha_1(u_1)$. The graph structure is
given by $(e_1,e_2)^\pm=(e_1^\pm,e_2^\pm)$, and similarly for the
reversal.

In some sense, $\gfG_1\times_\gfA\gfG_2$ is \emph{universal}: it is
the ``best'' way to construct an $\gfA$-labelled graph equipped with
morphisms to $\gfG_1$ and $\gfG_2$ (which of course are given by
projections to the first and second factor). The universal property
says that, for every graph $\gfH$ also equipped with morphisms
$\phi_i\colon\gfH\to\gfG_i$, there is a unique map
$\eta\colon\gfH\to\gfG_1\times_\gfA\gfG_2$ with
$\phi_i=\pi_i\circ\eta$.

\begin{exple}
  Consider $\gfG_2=\gfA\sqcup\gfA$, namely two disjoint copies of
  $\gfA$'s vertices and edges. Then
  $\gfG_1\times_\gfA\gfG_2\cong\gfG_1\sqcup\gfG_1$.

  Consider next $\gfG_2$ a subgraph of $\gfA$, for example the
  subgraph spanned by a subset $A'$ of $\gfA$'s vertices. Then
  $\gfG_1\times_\gfA\gfG_2$ is the subgraph of $\gfG_1$ whose
  labelling belongs to $\gfG_2$, for example the subgraph of $\gfG_1$
  spanned by vertices whose label is in $A'$.
\end{exple}

\begin{exple}
  Let $G_1,G_2$ be two groups with respective generating sets
  $S_1,S_2$. Then the Cayley graph $\gfC(G_1\times G_2,S_1\times S_2)$
  is the usual direct product of the graphs $\gfC(G_1,S_1)$ and
  $\gfC(G_2,S_2)$; it is a connected graph precisely when
  $S_1\times S_2$ generates $G_1\times G_2$. This direct product is
  the pullback $\gfC(G_1,S_1)\times_{1\sqcup 1}\gfC(G_2,S_2)$ over the
  trivial graph with one vertex and one edge. More generally, let
  there be maps $f_1\colon S_1\to S$ and $f_2\colon S_2\to S$; then
  the Cayley graph of $\gfC(G_1\times G_2,S_1\times_S S_2)$ is
  $\gfC(G_1,S_1)\times_{1\sqcup S}\gfC(G_2,S_2)$, naturally viewing
  $\gfC(G_i,S_i)\in\Graph_{/1\sqcup S}$ via $f_i$.
\end{exple}

Consider two $\gfB$-labelled graphs $\gfG_1,\gfG_2$, and let
$\alpha\colon\gfG_2\to\gfA$ be an $\gfA$-labelling on $\gfG_2$. The \emph{exponential}\footnote{This is not actually an exponential object in a category $\Graph_{/\gfB}$, because of the different categories $\Graph_{/\gfA}$, $\Graph_{/\gfB}$ involved. It does however produce exponential objects, in the sense of adjoints to fibre products, when $\gfA=\gfB$ and the $\gfA$- and $\gfB$-labellings coincide; see Lemma~\ref{lem:adjoint1}.} of these graphs is the graph
\[\gfG_1^{\gfG_2}=\{(f,a):a\in\gfA,f\in\Graph_{/\gfB}(\alpha^{-1}(\overline a),\gfG_1)\}.\]
There is a natural $\gfA$-labelling on $\gfG_1^{\gfG_2}$ by projection
to the second coordinate. The graph structure on $\gfG_1^{\gfG_2}$ is
given by $(f,a)^\pm=(f\restriction \alpha^{-1}(a^\pm),a^\pm)$. In the
unoriented case, we have $(f,a)'=(f',a')$ with $f'(e)=f(e')$ and
$f'(e^\pm)=f(e^\mp)$.

In words, an edge in $\gfG_1^{\gfG_2}$ with label $c\in E(\gfA)$ is a
function defined on the $\alpha$-preimages of $c$ and its endpoints,
with values in $\gfG_1$, which preserves the graph structure and the
$\gfB$-labelling.

\begin{exple}\label{ex:2sets}
  Consider the graph $\gfB$ with vertex set $\{0,1\}$ and no
  edge. Thus $\Graph_{/\gfB}$ is the category of sets $S$ partitioned
  in two parts $S_0\sqcup S_1$. Choose $\gfA=\gfB$ and let $\alpha$
  coincide with the $\gfB$-labelling. Unwrapping the definition, we see
  $(S_0\sqcup S_1)^{T_0\sqcup T_1}=S_0^{T_0}\sqcup S_1^{T_1}$, where
  in the last expression $S_i^{T_i}$ is as usual the set of maps
  $T_i\to S_i$.
\end{exple}

The classical bijection $A^{B\times C}=(A^C)^B$ is a particular case
of \emph{adjunction}: the right adjoint of the cartesian product is
the exponential. We have
\begin{lem}\label{lem:adjoint1}
  For any labelled graphs $\gfG_1\to\gfA$ and
  $\gfA\leftarrow\gfG_2\to\gfB$ and $\gfG_3\to\gfB$ we have a natural
  bijection
  \[\Graph_{/\gfB}(\gfG_1\times_\gfA\gfG_2,\gfG_3)\cong\Graph_{/\gfA}(\gfG_1,\gfG_3^{\gfG_2}).\]
\end{lem}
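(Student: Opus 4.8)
The plan is to exhibit the bijection explicitly by the familiar ``currying'' recipe and then check that it respects all the labelling constraints. Given a morphism $\Phi\colon\gfG_1\times_\gfA\gfG_2\to\gfG_3$ in $\Graph_{/\gfB}$, I would define $\widehat\Phi\colon\gfG_1\to\gfG_3^{\gfG_2}$ on an element $u_1\in\gfG_1$ with label $a=\alpha_1(u_1)\in\gfA$ by $\widehat\Phi(u_1)=(f_{u_1},a)$, where $f_{u_1}\colon\alpha^{-1}(\overline a)\to\gfG_1$ sends an element $u_2\in\alpha^{-1}(\overline a)$ to $\Phi(u_1,u_2)$; one must first note $(u_1,u_2)$ genuinely lies in the pullback because $\alpha_1(u_1)=a=\alpha(u_2)$ up to passing to endpoints, so $f_{u_1}$ is well-defined. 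Conversely, given $\Psi\colon\gfG_1\to\gfG_3^{\gfG_2}$ in $\Graph_{/\gfA}$, with $\Psi(u_1)=(f_{u_1},\alpha_1(u_1))$, I set $\check\Psi(u_1,u_2)=f_{u_1}(u_2)$, which makes sense exactly because membership in the pullback forces $u_2\in\alpha^{-1}(\overline{\alpha_1(u_1)})$, the domain of $f_{u_1}$. These two assignments are patently mutually inverse on the level of underlying maps, so the real content is that each sends morphisms to morphisms.

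The key steps, in order, are: (i) check $\widehat\Phi$ is a graph morphism $\gfG_1\to\gfG_3^{\gfG_2}$ — this amounts to verifying $\widehat\Phi(e_1^{\pm})=\widehat\Phi(e_1)^{\pm}$, i.e.\ that restricting $f_{e_1}$ along $\alpha^{-1}(a^\pm)\hookrightarrow\alpha^{-1}(\overline a)$ yields $f_{e_1^\pm}$, which is immediate from $(e_1,e_2)^\pm=(e_1^\pm,e_2^\pm)$ in the pullback and the definition of the graph structure on the exponential, with the unoriented reversal case handled by the matching formulas $(f,a)'=(f',a')$ and $(e_1,e_2)'=(e_1',e_2')$; (ii) check each $f_{u_1}$ really lands in $\Graph_{/\gfB}(\alpha^{-1}(\overline a),\gfG_1)$ wait — it should land in morphisms to $\gfG_3$, so here I use that $\Phi$ itself is a $\gfB$-morphism and that the $\gfB$-labelling on the pullback factors through $\gfG_2$, so $f_{u_1}$ preserves $\gfB$-labels; (iii) check $\widehat\Phi$ lies over $\gfA$, i.e.\ the $\gfA$-label of $\widehat\Phi(u_1)$, which is the second coordinate $a$, equals $\alpha_1(u_1)$ — true by construction; (iv) run the symmetric verifications for $\check\Psi$: it is a graph morphism because $\Psi$ is, it lies over $\gfB$ because each $f_{u_1}$ does; (v) naturality in $\gfG_1$, $\gfG_2$, $\gfG_3$, obtained by unwinding the definitions against the evident functorial maps (precomposition, postcomposition, base change along $\gfB$-morphisms).

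I expect the main obstacle to be purely bookkeeping: keeping straight the two different slice categories $\Graph_{/\gfA}$ and $\Graph_{/\gfB}$ and the fact that $\gfG_2$ carries both labellings simultaneously, so that the exponential $\gfG_3^{\gfG_2}$ is $\gfA$-labelled while its ``interior'' morphisms are $\gfB$-morphisms. The one genuinely delicate point is the compatibility of the endpoint and reversal operations on the exponential with those on the pullback — one has to match $(f,a)^\pm=(f\restriction\alpha^{-1}(a^\pm),a^\pm)$ against the componentwise structure on $\gfG_1\times_\gfA\gfG_2$ — but the definitions in the preceding subsection were arranged precisely so that this falls out mechanically, and Example~\ref{ex:2sets} is the degenerate case one can use as a sanity check. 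No deeper idea is needed; the statement is the graph-theoretic shadow of the set-theoretic adjunction $\Hom(X\times Y,Z)\cong\Hom(X,Z^Y)$, and the proof is the corresponding diagram chase carried out with the head/tail/reversal data tracked explicitly.
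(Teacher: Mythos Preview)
Your proposal is correct and follows essentially the same currying approach as the paper: define $\widehat\Phi(u_1)=(f_{u_1},\alpha_1(u_1))$ with $f_{u_1}(u_2)=\Phi(u_1,u_2)$, define the inverse by uncurrying, and verify that each direction respects the graph structure and the $\gfA$- and $\gfB$-labellings. The paper's proof is organized in the same way (it even uses the same notation $f_{u_1}$), with the only difference being that it does not spell out the naturality check you list as step~(v).
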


The proof is essentially ``currying'': a morphism
$\phi\colon\gfG_1\times_\gfA\gfG_2\to\gfG_3$ gives rise to a family of
morphisms $\phi_x\colon\gfG_2\to\gfG_3$, indexed by $x\in\gfG_1$, by
the formula $\phi_x(y)=\phi(x,y)$; and conversely.
\begin{proof}[Proof with the painful details]
  Write the labellings $\alpha_i\colon\gfG_i\to\gfA$ for $i=1,2$ and
  $\beta_i\colon\gfG_i\to\gfB$ for $i=2,3$. We give maps in both
  directions, and then show that they are inverses of each other.

  $(\exists\rightarrow)$ Let
  $\lambda\colon \gfG_1 \times_\gfA \gfG_2 \to \gfG_3$ be a graph
  morphism.  Define the map $\rho\colon \gfG_1 \to \gfG_3^{\gfG_2}$ by
  $\rho(u_1)=(f_{u_1},\alpha_1(u_1))$ with
  $f_{u_1}(u_2)=\lambda(u_1,u_2)$ for all
  $u_2\in\alpha_2^{-1}\alpha_1(\overline{u_1})$. Note
  $f_{u_1}\in\Graph_{/\gfB}$ since it is a restriction of
  $\lambda\in\Graph_{/\gfB}$. Clearly $\rho(u_1)^\pm=\rho(u_1^\pm)$
  and $\rho$ preserves the $\gfA$-labelling, so
  $\rho\in\Graph_{/\gfA}$.

  $(\exists\leftarrow)$ Let $\rho\colon \gfG_1 \to \gfG_3^{\gfG_2}$ be
  a graph morphism. Define the map
  $\lambda\colon \gfG_1 \times_\gfA \gfG_2 \to \gfG_3$ by
  $\lambda(u_1,u_2)=f_{u_1}(u_2)$ where $\rho(u_1)=(f_{u_1},\alpha_1(u_1))$. Note,
  since $f_{u_1}\in\Graph_{/\gfB}$, that
  $\lambda(u_1,u_2)^\pm=\lambda(u_1^\pm,u_2^\pm)$ and that $\lambda$
  preserves the $\gfB$-labelling; so $\lambda\in\Graph_{/\gfB}$.

  $(\leftarrow\circ\rightarrow)$ We next show that the constructions
  are inverses of each other. Let
  $\widehat\rho \in \Graph_{/\gfA}(\gfG_1, \gfG_3^{\gfG_2})$ be
  constructed from
  $\lambda \in \Graph_{/\gfB}(\gfG_1 \times_\gfA \gfG_2, \gfG_3)$,
  which in turn is constructed from
  $\rho \in \Graph_{/\gfA}(\gfG_1, \gfG_3^{\gfG_2})$. We show
  $\widehat\rho = \rho$: for $u_1 \in \gfG_1$ we have
  $\widehat\rho(u_1)=(\widehat{f_{u_1}},\alpha_1(u_1))$ and
  $\rho(u_1)=(f_{u_1},\alpha_1(u_1))$, with
  $\widehat{f_{u_1}}(u_2) = \lambda(u_1,u_2) = f_{u_1}(u_2)$ directly
  from the defining formulas, so $\widehat{f_{u_1}}=f_{u_1}$.

  $(\rightarrow\circ\leftarrow)$ Conversely, let
  $\widehat\lambda \in \Graph_{/\gfA}(\gfG_1 \times_\gfA \gfG_2,
  \gfG_3)$ be constructed from
  $\rho \in \Graph_{/\gfB}(\gfG_1, \gfG_3^{\gfG_2})$, which in turn is
  constructed from
  $\lambda \in \Graph_{/\gfA}(\gfG_1 \times_\gfA \gfG_2,
  \gfG_3)$. Writing $\rho(u_1)=(f_{u_1},\alpha_1(u_1))$ we have
  \[\widehat\lambda(u_1,u_2) = f_{u_1}(u_2) = \lambda(u_1,u_2).\qedhere\]
\end{proof}

\begin{exple}
  Continuing on Example~\ref{ex:2sets}, with $\gfG_1=S_0\sqcup S_1$
  and $\gfG_2=T_0\sqcup T_1$ and $\gfG_3=U_0\sqcup U_1$, we get
  \begin{align*}
    \Hom(\gfG_1\times_\gfA\gfG_2,\gfG_3)&=\Hom(S_0\times T_0\sqcup S_1\times T_1,U_0\sqcup U_1)=U_0^{S_0\times T_0}\times U_1^{S_1\times T_1}\\
    &=\big(U_0^{T_0}\big)^{S_0} \times \big( U_1^{T_1} \big)^{S_1} = \Hom(S_0\sqcup S_1,U_0^{T_0}\sqcup U_1^{T_1})=\Hom(\gfG_1,\gfG_3^{\gfG_2}).
  \end{align*}
\end{exple}

\subsection{Path subdivisions}
We introduce a geometric operation, that of subdividing edges by
replacing them by unbounded paths:
\begin{defn}
  Let $\gfB$ be a graph. Its \emph{path subdivision} is the graph
  $\gfB^*$ with vertex set $\gfB=V(\gfB)\sqcup E(\gfB)$ and edge set
  $\{0,1\}\times E(\gfB)\times\{0,1\}$, with extremities
  \begin{xalignat*}{3}
    (0,e,0)^\pm &=e^\pm,& (0,e,1)^- &=e^-, & (0,e,1)^+ &=e,\\
    (1,e,1)^\pm &=e,& (1,e,0)^- &=e, & (1,e,0)^+ &=e^+.\\
  \end{xalignat*}
  In the unoriented case the vertex set is
  $\gfB/\{e'{=}e\;\forall e\in E(\gfB)\}$, and $(i,e,j)'=(j,e',i)$.
\end{defn}

In pictures, we are replacing every edge $e$, from $v$ to $w$, by a small graph:
\begin{equation}\label{eq:subdiv}
  \begin{tikzpicture}[every node/.style={inner sep=1pt,minimum size=0pt},baseline=0]
    \begin{scope}[xshift=-3cm]
      \node (v0) at (195:2) {$v$};
      \node (w0) at (15:2) {$w$};
      \draw[->] (v0) edge (w0);
    \end{scope}
    \node at (0,0) {\Large $\rightsquigarrow$};
    \begin{scope}[xshift=3cm]
      \node (v) at (195:2) {$v$};
      \node (w) at (15:2) {$w$};
      \node (e) at (105:0.5) {$e$};
      \draw[->] (v) edge[bend right=15] node[below right] {$(0,e,0)$} (w);
      \draw[->] (v) edge[bend left=5] node[above left] {$(0,e,1)$} (e);
      \draw[->] (e) edge[out=75,in=135,loop] node[above] {$(1,e,1)$} (e);
      \draw[->] (e) edge[bend left=5] node[above] {$(1,e,0)$} (w);
    \end{scope}
  \end{tikzpicture}
\end{equation}

There is a natural map from paths in $\gfB^*$ to paths in $\gfB$;
namely, replace each path going through an $E(\gfB)$-vertex by the
straight edge going directly from $v$ to $w$
in~\eqref{eq:subdiv}. There is a natural map between the geometric
realization of $\gfB^*$ to that of $\gfB$, and a natural embedding of
$\gfB$ in $\gfB^*$ by mapping every edge $e$ to $(0,e,0)$. There
is no natural graph morphism $\gfB^*\to\gfB$.
The operation is evidently functorial, i.e.\ there is a natural way
to associate to a graph homomorphism (or labelled graph) $\alpha\colon\gfG\to\gfB$ a
graph homomorphism (or labelled graph) $\beta\colon\gfG^*\to\gfB^*$.

Consider a labelled graph $\alpha\colon\gfG\to\gfB^*$. We construct a
labelled graph $\beta\colon\gfG^\flat\to\gfB$ as follows: its vertex set is
$\alpha^{-1}(V(\gfB))$, with $u\in V(\gfG^\flat)$ labelled
$\alpha(u)$. For each path $(e_1,\dots,e_n)$ in $\gfG$, say from $u$
to $v$, such that $\alpha(u),\alpha(v)\in V(\gfB)$ and the labels
along $(e_1,\dots,e_n)$ are $(0,c,1),(1,c,1),\dots,(1,c,1),(1,c,0)$
for some $c\in E(\gfB)$, there is an edge from $u$ to $v$ labelled
$c$. We call paths $(e_1,\dots,e_n)$ as above \emph{coherent paths}.
The terminology ``$\flat$'' is justified by deFLATion of paths into
edges.

It may sometimes be necessary to add some extra, ``sink'' vertices to
$\gfG^*$: anticipating~\S\ref{ss:simulations}, the new vertices in
$\gfG^*$ represent intermediate steps in a calculation; and we want to
allow the possibility of calculations that start but do not terminate
--- no results are produced, but computational resources are
consumed. For a labelled graph $\alpha\colon\gfG\to\gfB$, define the
labelled graph $\beta\colon\gfG^\sharp\to\gfB^*$ as follows: its vertex  is
\[V(\gfG^\sharp)=V(\gfG^*)\sqcup \{\pm1\}\times E(\gfB).
\]
Its edge set is the union of $E(\gfG^*)$ and, for each edge $e$
labelled $c=\alpha(e)$ of $\gfG$, an edge labelled $(0,c,1)$ from
$e^-\in V(\gfG)$ to $(-,c)$, an edge labelled $(1,c,0)$ from
$(+,c)$ to $e^+$, and edges labelled $(1,c,1)$ from
$(+,c)$ to $(+,c)$, $(-,c)$ and $e$ and from
$e$ and $(-,c)$ to $(-,c)$. In pictures:
\begin{equation}
  \begin{tikzpicture}[every node/.style={inner sep=2pt,minimum size=0pt},baseline=0]
    \begin{scope}[xshift=-3cm]
      \node (v0) at (195:2) {$v$};
      \node (w0) at (15:2) {$w$};
      \draw[->] (v0) edge node[sloped,above] {$e$ with $\alpha(e)=c$} (w0);
    \end{scope}
    \node at (0,0) {\Large $\rightsquigarrow$};
    \begin{scope}[xshift=3cm]
      \node (v) at (195:2) {$v$};
      \node[inner sep=1pt] (-) at (-1,2) {$(-,c)$};
      \node[inner sep=1pt] (+) at (1,2) {$(+,c)$};
      \draw[->] (v) edge (-);
      \draw[->] (+) edge[out=60,in=120,loop] () edge node[below] {$(1,e,1)$} (-) edge (e) edge (w);
      \draw[->] (-) edge[out=60,in=120,loop] ();
      \draw[->] (e) edge (-);
      \node (w) at (15:2) {$w$};
      \node (e) at (105:0.5) {$e$};
      \draw[->] (v) edge[bend right=15] node[below,sloped] {$(0,e,0)$} (w);
      \draw[->] (v) edge[bend left=5] node[above,sloped,pos=0.6] {$(0,e,1)$} (e);
      \draw[->] (e) edge[out=255,in=315,loop] ();
      \draw[->] (e) edge[bend left=5] node[above] {$(1,e,0)$} (w);
    \end{scope}
  \end{tikzpicture}
\end{equation}

The operations
$\gfG\mapsto\gfG^\flat$ and $\gfG\mapsto\gfG^\sharp$ close to being adjoint to each other:

\begin{lem}\label{lem:notadjoint}
  Consider $\gfG\in\Graph_{/\gfB^*}$ and $\gfH\in\Graph_\gfB$. 
  Then there is a map
  \[ \Hom(\gfG,\gfH^\sharp) \rightarrow \Hom(\gfG^\flat,\gfH). \]
  If furthermore $\gfG^\flat$ is weakly \'etale, then
  there is also a map
  \[ \Hom(\gfG,\gfH^\sharp) \leftarrow \Hom(\gfG^\flat,\gfH), \]
  so $\Hom(\gfG^\flat,\gfH)$ is empty if and only if
  $\Hom(\gfG,\gfH^\sharp)$ is empty.
\end{lem}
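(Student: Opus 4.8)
The plan is to unwind the definitions of $\gfG^\flat$, $\gfG^\sharp$, and the path subdivision, and to match morphisms on the two sides by comparing what data each of them records. A morphism $\phi\colon\gfG\to\gfH^\sharp$ over $\gfB^*$ assigns to each vertex $u$ of $\gfG$ a vertex of $\gfH^\sharp$ lying over $\alpha(u)$; for $u\in\alpha^{-1}(V(\gfB))=V(\gfG^\flat)$ this is a vertex of $\gfH$, and for the ``interior'' vertices $u\in\alpha^{-1}(E(\gfB))$ it is either an interior vertex of $\gfH^*$ or a sink vertex $(\pm,c)$. Thus $\phi$ already determines a map $V(\gfG^\flat)\to V(\gfH)$ compatible with the $\gfB$-labelling. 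First I would check that this map extends to a graph morphism $\gfG^\flat\to\gfH$: given a coherent path $(e_1,\dots,e_n)$ in $\gfG$ from $u$ to $v$ with labels $(0,c,1),(1,c,1),\dots,(1,c,0)$, its image under $\phi$ is a path in $\gfH^\sharp$ with the same label sequence; I need to see that such a path in $\gfH^\sharp$ necessarily ``commits'' to a genuine edge $e$ of $\gfH$ with $\alpha(e)=c$ and runs $e^-\to e\to\dots\to e\to e^+$, so that the edge of $\gfG^\flat$ labelled $c$ from $u$ to $v$ has a well-defined image. The sink structure of $\gfH^\sharp$ is designed precisely so that once one leaves $e^-$ along a $(0,c,1)$-edge one is still able to reach $e^+$; but I should confirm that the only way to go from an $e^-$-type vertex to an $e^+$-type vertex using labels $(0,c,1),(1,c,1)^*,(1,c,0)$ is via a single committed edge $e$. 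This gives the forward map $\Hom(\gfG,\gfH^\sharp)\to\Hom(\gfG^\flat,\gfH)$.

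For the backward map, assume $\gfG^\flat$ is weakly \'etale and let $\psi\colon\gfG^\flat\to\gfH$ be given. I would build $\phi\colon\gfG\to\gfH^\sharp$ as follows: on $V(\gfG^\flat)$ set $\phi=\psi$; on an interior vertex $u\in\alpha^{-1}(c)$, $u$ lies on some edges of $\gfG$, and I use these to decide an image. If $u$ lies on a coherent path realizing an edge $\bar e$ of $\gfG^\flat$, send $u$ to the interior vertex $e\in V(\gfH^*)$ where $e=\psi(\bar e)$; here weak \'etaleness of $\gfG^\flat$ is what guarantees this is unambiguous, i.e.\ that all coherent paths through $u$ realize the same edge of $\gfG^\flat$ (they share the endpoints once they share one, by locally injective behaviour of the labelling $\alpha$ and the weakly \'etale hypothesis). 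If $u$ lies on no coherent path — it is the start or interior of a ``stuck'' computation — send $u$ to the appropriate sink $(+,c)$ or $(-,c)$ according to whether one can still reach forward or only came from behind. Then I would check edge by edge (there are only the few local patterns in the picture) that $\phi$ respects heads, tails and the $\gfB^*$-labelling and that the two constructions are mutually inverse, or at least that each is well-defined, which already gives the emptiness equivalence.

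The main obstacle I expect is the backward direction, specifically handling the interior vertices of $\gfG$ that are not covered by any coherent path: these are exactly the vertices that motivated introducing $\gfG^\sharp$ and its sink vertices, and one must verify that the sink gadget in $\gfH^\sharp$ always offers a legal target. Concretely, an interior vertex $u$ over $c$ in $\gfG$ may have an arbitrary configuration of incoming/outgoing edges labelled $(0,e,1)$, $(1,e,1)$, $(1,e,0)$ (with $e$ ranging over $\gfG$'s edges over $c$), and I must send $u$ somewhere in $\gfH^\sharp$ that simultaneously accommodates all of them; the sinks $(-,c)$ and $(+,c)$ with their self-loops and the cross edges $(+,c)\to(-,c)$ absorb the "dead" cases, while weak \'etaleness forces the "live" cases through a single committed $e$. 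Checking that these cases are exhaustive and mutually exclusive — i.e.\ that the labelling $\alpha\colon\gfG\to\gfB^*$ plus weak \'etaleness of $\gfG^\flat$ partitions the interior vertices into "on a unique coherent path", "can still reach an $e^+$", and "can only be reached from an $e^-$" — is the delicate bookkeeping, but it is purely local and finite-case, so I expect no conceptual difficulty beyond care.
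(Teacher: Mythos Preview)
Your approach matches the paper's in both directions: restrict $\phi$ to $V(\gfG^\flat)$ and read off the committed edge along each coherent path for the forward map; rebuild $\phi$ from $\psi$ vertex by vertex for the backward map, routing dead-end computations to the sinks.

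There is, however, a genuine gap in your backward construction. You assert that weak \'etaleness of $\gfG^\flat$ makes ``all coherent paths through $u$ realize the same edge of $\gfG^\flat$''. This is false: distinct coherent paths through $u$ are by definition \emph{distinct} edges of $\gfG^\flat$. What weak \'etaleness actually buys you (via a hybrid-path argument: splice the initial segment of one coherent path through $u$ with the final segment of another, producing a coherent path sharing an endpoint with each) is only that all such paths have the same \emph{endpoints} and the same label $c$. Nothing forces $\psi$ to send two parallel same-label edges of $\gfG^\flat$ to the same edge of $\gfH$, so your definition $\phi(u)=\psi(\bar e)$ is genuinely ambiguous. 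Worse, if two adjacent interior vertices $u,u'$ get assigned different edges $e\neq e'$ of $\gfH$, then the $(1,c,1)$-edge between $u$ and $u'$ has nowhere to go in $\gfH^\sharp$, since $\gfH^\sharp$ contains no $(1,c,1)$-edge from one edge-vertex of $\gfH$ to a different one.

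The repair, which the paper carries out explicitly, is a preprocessing step on $\psi$: since the lemma only asks for \emph{some} map $\Hom(\gfG^\flat,\gfH)\to\Hom(\gfG,\gfH^\sharp)$, you may first replace $\psi$ by a morphism that is constant on each class of parallel same-label edges (pick one preferred coherent path $p_{v,c,w}$ for each triple and redefine $\psi(q)=\psi(p_{q^-,\alpha(q),q^+})$). After this normalization your construction goes through as written. You should also handle explicitly the interior vertices that admit neither a forward path to a vertex-vertex nor a backward path from one; the paper sends these arbitrarily to $(+,\alpha(u))$.
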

\begin{proof}
  Consider first $f\colon\gfG\to\gfH^\sharp$, and define
  $f^\flat\colon\gfG^\flat\to\gfH$ as follows. Vertices of
  $\gfG^\flat$ are in particular vertices of $\gfG$, so we define
  $f^\flat$ on $V(\gfG^\flat)$ as the restriction of $f$. For an edge
  $(e_1,\dots,e_n)$ of $\gfG^\flat$, the path $(f(e_1),\dots,f(e_n))$
  is of the form
  $((\mathbf1_{i\neq1},e,\mathbf1_{i\neq n}))_{1\le i\le n}$, and we
  set $f^\flat(e_1,\dots,e_n)=e$. Note that in fact $f^\flat$ only
  depends on the restriction of $f$'s image to $\gfH^*$, since there
  is no coherent path in $\gfH^\flat$ going through the vertices
  $\{\pm1\}\times E(\gfB)$ and starting and ending at vertices of
  $\gfH$.

  Consider next $g\colon\gfG^\flat\to\gfH$. As a preprocessing step,
  we may assume without loss of generality that $g$ has the property
  that for all $p, q \in E(\gfG^\flat)$ with $p^- = q^-$ and
  $p^+ = q^+$ and $\alpha(p) = \alpha(q)$ we have $g(p) = g(q)$:
  simply choose one preferred path $p_{u,c,v}$ between any two
  $u, v \in V(\gfG^\flat)$ and for each $c \in E(\gfB)$ with
  $\alpha(u) = c^-$, $\alpha(v) = c^+$, and redefine
  $g(q) = g(p_{q^-,\alpha(q),q^+})$ for all $q \in E(\gfG^\flat)$.
  
  Now define $g^\sharp\colon\gfG\to\gfH^\sharp$ as follows. On
  vertices of $\gfG$ with label in $V(\gfB)$, called ``vertex
  vertices'', let $g^\sharp$ coincide with $g$. Consider now a vertex
  $p$ of $\gfG$ with label in $V(\gfB^*)\setminus V(\gfB)$, an ``edge
  vertex''. If $p$ lies on a coherent path $(e_1,\dots,e_n)$ in
  $\gfG$, set $g^\sharp(p)=g(e_1,\dots,e_n)$ for an arbitrary such
  path. Since $\gfG^\flat$ is weakly \'etale, all coherent paths have
  the same end points and $\alpha$-label $\alpha(p)$, and by our
  preprocessing of $g$ all such paths have the same image, so the
  definition of $g^\sharp(p)$ is unambiguous.
  
  Consider next an edge vertex $p$ that does not lie on a
  coherent path. If it admits a path labelled
  $(1,c,1),\dots,(1,c,1),(1,c,0)$ towards a vertex vertex, map it by
  $g^\sharp$ to $(+,c)$; if on the other hand there is path from a
  vertex vertex to $p$ labelled $(0,c,1),(1,c,1),\dots,(1,c,1)$ then
  map it by $g^\sharp$ to $(-,c)$; if neither path exists, map it (by
  will) to $(+,\alpha(p))$.

  There is a unique way of extending $g^\sharp$ to edges: edges in
  $\gfG$ between vertex vertices, namely with label $(0,c,0)$, are
  edges $e\in E(\gfG^\flat)$ so one may set
  $g^\sharp(e)=(0,g(e),0)$. Edges along coherent paths are mapped by
  $g^\sharp$ to the unique possible edges $(0,e,1),(1,e,1),\dots,(1,e,1),(1,e,0)$ in
  $\gfH^*\subset\gfH^\sharp$. Finally, if an edge of $\gfG$ starts or
  ends at a vertex that we already decided to map to
  $w \in \{+,-\}\times E(\gfB) \subseteq V(\gfH^\sharp)$, then map this
  edge by $g^\sharp$ to a loop at $w$, except if it respectively ends
  or starts at a vertex vertex or an edge vertex on a coherent path.
\end{proof}
Note that if $\gfG^\flat$ is actually \'etale, meaning that in $\gfG$
there is a unique coherent path with a given label between any two
vertices, then the preprocessing step is not needed.

Note also that, in the proof above, the compositions of $f\mapsto f^\flat$
and $g\mapsto g^\sharp$ are not quite the identity: $(g^\sharp)^\flat$ is the
identity on vertices, but when $\gfG^\flat$ is not \'etale this
identifies images of edges corresponding to coherent paths with the same
endpoints and labels. On the other hand, $(f^\flat)^\sharp$ projects
components without initial or final vertex to ``sinks''. On the other hand,
restricting to an appropriate subclass of graphs in which all edge vertices
lie on coherent paths, one obtains an adjunction between $\gfG\mapsto\gfG^\flat$
and $\gfH\mapsto\gfH^\sharp$.

\subsection{Simulations}\label{ss:simulations}
We now introduce a more general notion of morphism, in the same way
that bimodules generalise algebra morphisms:
\begin{defn}
  Let $\gfA,\gfB$ be two graphs, which we think of as labellings. A
  \emph{simulator} is a graph $\gfS$ equipped with two labellings
  $\alpha\colon\gfS\to\gfA$ and $\beta\colon\gfS\to\gfB^*$. We call
  $\gfS$, more precisely, an \emph{$(\gfA,\gfB)$-simulator}.
\end{defn}

Let $\gfG$ be an $\gfA$-labelled graph. Its \emph{image} under the
simulation $\gfS$ is the graph obtained from $\gfG\times_\gfA\gfS$ by
replacing each subgraph as on the right of~\eqref{eq:subdiv} by the
corresponding edge; namely, using the labelling $\beta$ the graph
$\gfG\times_\gfA\gfS$ is $\gfB^*$-labelled, and we define
$\gfG\rtimes\gfS$ as the $\gfB$-labelled graph
$(\gfG\times_\gfA\gfS)^\flat$. We say that $\gfG$ \emph{simulates}
$\gfH$ if $\gfH\cong\gfG\rtimes\gfS$ for some \emph{finite} simulator
$\gfS$.  We say $\gfG$ \emph{simulates} $\gfH$ \emph{up to
  simplification} if $\gfG$ simulates a graph $\gfK$ whose simplification is $\gfH$.

\newcommand\NESW{\tikz[baseline=-1ex]{\draw(0,-1ex) -- (0,1ex) (-1ex,0) -- (1ex,0) -- (0,0);}}
\newcommand\NEW{\tikz[baseline=0]{\draw(0,0) -- (0,1ex) (-1ex,0) -- (1ex,0) -- (0,0);}}
\newcommand\NES{\tikz[baseline=-1ex]{\draw(0,-1ex) -- (0,1ex) (0,0) -- (1ex,0) -- (0,0);}}
\newcommand\NE{\tikz[baseline=0]{\draw(0,0) -- (0,1ex) (0,0) -- (1ex,0) -- (0,0);}}

\begin{exple}\label{ex:quadrant}
  The \emph{plane} is the Cayley graph of
  $\Z^2=\langle\rightarrow,\leftarrow,\uparrow,\downarrow\rangle$, and
  will still be written $\Z^2$. The \emph{quadrant} is the full
  subgraph of $\Z^2$ with vertex set $\N^2$; its vertices have degree
  $4$, $3$ or $2$ depending on how many of their coordinates are
  $0$.

  The quadrant is naturally vertex-labelled, according to the available
  directions: $\NESW, \NES, \NEW, \NE$; thus the origin is labelled
  $\NE$ in the quadrant, and the line $\{x=0\}$ is labelled
  $\NES$. (For exhaustivity, the plane is also vertex-labelled, with
  $\NESW$ everywhere). The quadrant's labelling is bit finer than the
  sunny-side-up labelling from Example~\ref{ex:ssu} in that it
  remembers the axes too.
\end{exple}

\begin{exple}\label{ex:simquadrant}
    We claim that the quadrant simulates the plane. For this, it
    suffices to exhibit a finite simulator:
  
  \centerline{\begin{tikzpicture}[every state/.style={inner sep=1pt,minimum size=5mm},>=stealth']
      \node[state] (00) at (0,0) {\NE};
      \foreach\i/\j/\x/\y/\sym in {+/0/3/0/\NEW,-/0/-3/0/\NEW,0/+/0/2/\NES,0/-/0/-2/\NES,+/+/3/2/\NESW,+/-/3/-2/\NESW,-/+/-3/2/\NESW,-/-/-3/-2/\NESW} {\node[state] (\i\j) at (\x,\y) {\sym}; }
      \foreach\j in {-,0,+} {
        \draw[->] (0\j) -- node[above] {${\rightarrow}|{\rightarrow}$} (+\j);
        \draw[->] (0\j) -- node[above] {${\rightarrow}|{\leftarrow}$} (-\j);
        \draw[->] (+\j) edge[in=30,out=-30,loop] node[right] {${\rightarrow}|{\rightarrow}$} ();
        \draw[->] (-\j) edge[in=210,out=150,loop] node[left] {${\rightarrow}|{\leftarrow}$} ();
      }
      \foreach\i in {-,0,+} {
        \draw[->] (\i0) -- node[right] {${\uparrow}|{\uparrow}$} (\i+);
        \draw[->] (\i0) -- node[right] {${\uparrow}|{\downarrow}$} (\i-);
        \draw[->] (\i+) edge[in=120,out=60,loop] node[above] {${\uparrow}|{\uparrow}$} ();
        \draw[->] (\i-) edge[in=300,out=240,loop] node[below] {${\uparrow}|{\downarrow}$} ();
      }
    \end{tikzpicture}}

  In this graph, edges are labelled as `$a|b$' to represent its labels
  under the left and right map, respectively, to
  $(1,\{\rightarrow,\leftarrow,\uparrow,\downarrow\})$. We have only
  drawn edges corresponding to directions $\uparrow, \rightarrow$ for
  the map to $\gfA$; the other edges are obtained by flipping the
  arrows. Since all edges under the right map are of the form
  $(0,e,0)$, we have simply written $e$ instead; and since there are
  no vertex labels in $\Z^2$ we have only indicated the vertex labels
  from $\N^2$ next to the nodes.

  In particular, the root $(0,0)$ in the quadrant (mapped to the
  central node of the simulator) corresponds to the root $(0,0)$ in
  the plane; the axes in the quadrant are each covered by two
  half-axes; and the remainder of the quadrant is covered by the four
  quadrants of the plane. Thus the plane is simulated, within the
  quadrant, by folding it like a handkerchief.
\end{exple}

\begin{exple}
  The previous example does not use paths of length $>1$ under the
  right map. It is, in a sense, possible to simulate the plane within the quadrant
  while never putting two simulated vertices at the same place, at the
  cost of simulating edges in the plane by paths of length $2$ in the
  quadrant.

  For simplicity, we will simulate $\Z$ in $\N$; naturally
  simulators for $\Z^2$ in $\N^2$ may be obtained by taking
  products. The simulation from Example~\ref{ex:simquadrant}, when
  restricted to $\Z$, was (now depicted folded, with \textsf{v.} and
  \textsf{a.} for \textsf{vitellus} and \textsf{albumen})
  
  \centerline{\begin{tikzpicture}[every state/.style={inner sep=1pt,minimum size=5mm},>=stealth']
      \node[state] (0) at (0,0) {\textsf{v.}};
      \node[state] (+) at (3,0.4) {\textsf{a.}};
      \node[state] (-) at (3,-0.4) {\textsf{a.}};
      \draw[->] (0) -- node[above] {${\rightarrow}|{\rightarrow}$} (+);
      \draw[->] (0) -- node[below] {${\rightarrow}|{\leftarrow}$} (-);
      \draw[->] (+) edge[in=30,out=-30,loop] node[right] {${\rightarrow}|{\rightarrow}$} ();
      \draw[->] (-) edge[in=30,out=-30,loop] node[right] {${\rightarrow}|{\leftarrow}$} ();
    \end{tikzpicture}}

  The idea of the following simulator is that $n\in\Z$ is simulated by
  $2n\in\N$ and $-n\in\Z$ by $2n-1\in\N$, for all $n\ge0$. This is
  impossible to achieve exactly with our definition, so we produce
  $\Z\sqcup\N\sqcup{-\N}$ instead.  The vertices of the simulator are
  now of two kinds: split nodes indicate an actual vertex of the
  simulated graph (their label in the simulated graph being
  furthermore indicated by the lower symbol --- in this case,
  $-,\cdot,+$ giving the sign\footnote{The Cayley graph of $\Z$ does
    not have these markings, but having them makes the automaton
    easier to read, and identifying $-$ with $+$ yields a valid
    simulation of seeded-$\Z$.} in $\Z$), and unsplit nodes indicate
  intermediate nodes from $\gfB^*\setminus\gfB$:
  
  \centerline{\begin{tikzpicture}[every state/.style={inner sep=1pt,minimum size=5mm},>=stealth']
      \ellipsesplitnode{0}{(0,0)}{1.2mm}{1mm}{$\textsf{v.}$}{$\cdot$}
      \node[state] (o1) at (2.5,0) {\textsf{a.}};
      \ellipsesplitnode{e1}{(5,0)}{1.2mm}{1mm}{$\textsf{a.}$}{$+$}
      \ellipsesplitnode{o2}{(-2.5,0)}{1.2mm}{1mm}{$\textsf{a.}$}{$-$}
      \node[state] (e2) at (-5,0) {\textsf{a.}};
      \draw[->] (0) edge[bend left=15] node[above] {${\rightarrow}|(0,\rightarrow,1)$} (o1);
      \draw[->] (0) edge[bend right=15] node[above] {${\rightarrow}|(0,\leftarrow,0)$} (o2);
      \draw[->] (o1) edge[bend left=15] node[above] {${\rightarrow}|(1,\rightarrow,0)$} (e1);
      \draw[->] (e1) edge[bend left=15] node[below] {${\rightarrow}|(0,\rightarrow,1)$} (o1);
      \draw[->] (o2) edge[bend left=15] node[below] {${\rightarrow}|(0,\leftarrow,1)$} (e2);
      \draw[->] (e2) edge[bend left=15] node[above] {${\rightarrow}|(1,\leftarrow,0)$} (o2);
    \end{tikzpicture}}

  Indeed following $(\rightarrow)^{2n}$ in $\N$, starting from the
  origin, amounts to following
  $((0,\rightarrow,1)(1,\rightarrow,0))^n$ in the pullback, which
  amounts to following $(\rightarrow)^n$ in $\Z$, and following
  $(\rightarrow)^{2n+1}$ in $\N$, starting from the origin, amounts to
  following $(0,\leftarrow,0)((0,\leftarrow,1)(1,\leftarrow,0))^n$ in
  the pullback, which amounts to following $(\leftarrow)^{n+1}$ in
  $\Z$.
  
  The fibre product $\N\rtimes\gfS$ also contains vertices
  $(2n,\tikz[baseline=-1mm]{\ellipsesplitnode{0}{(0,0)}{0mm}{0mm}{\tiny\textsf{v.}}{\tiny$-$}})$
  and
  $(2n+1,\tikz[baseline=-1mm]{\ellipsesplitnode{0}{(0,0)}{0mm}{0mm}{\tiny\textsf{v.}}{\tiny$+$}})$ producing a copy of $-\N$
  and a copy of $\N$.
  If we add vertex labels for odd and even positions in $\N$, then it is possible to eliminate these
  additional copies of $\N$.
  Such a labelling can be introduced by an SFT marking (which is always a possibility in our applications,
  as our simulators run on graphs decorated by SFT configurations). See also Section~\ref{ss:QuadToPlane},
  where we give another interpretation.
\end{exple}

We adapt the definition of exponential objects for simulators. Let
$\gfH$ be a $\gfB$-labelled graph, labelled by
$\eta\colon\gfH\to\gfB$. We define $\gfH^\gfS$ as the $\gfA$-labelled
graph $(\gfH^\sharp)^\gfS$, with the morphisms in the exponential
belonging to $\Graph_{/\gfB^*}$.
\begin{lem}\label{lem:almostadjoint}
  Let $\gfG$ be an $\gfA$-labelled graph, let $\gfH$ be a
  $\gfB$-labelled graph, and let $\gfS$ be an
  $(\gfA,\gfB)$-simulator. Then 
  \[\Graph_{/\gfB}(\gfG\rtimes\gfS,\gfH)\neq\emptyset\text{ whenever }\Graph_{/\gfA}(\gfG,\gfH^\gfS)\neq\emptyset. \]
  If additionally $\gfG\rtimes\gfS$ is weakly \'etale, then 
  \[\Graph_{/\gfB}(\gfG\rtimes\gfS,\gfH)\neq\emptyset\text{ if and only if }\Graph_{/\gfA}(\gfG,\gfH^\gfS)\neq\emptyset. \]
\end{lem}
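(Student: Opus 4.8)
The plan is to deduce this statement by composing the two near\-/adjunctions already in hand: Lemma~\ref{lem:adjoint1} (currying between the fibre product $-\times_\gfA-$ and the exponential) and Lemma~\ref{lem:notadjoint} (the near\-/adjunction between $(-)^\flat$ and $(-)^\sharp$). The point is purely formal unwinding of notation: by definition $\gfG\rtimes\gfS=(\gfG\times_\gfA\gfS)^\flat$ and $\gfH^\gfS=(\gfH^\sharp)^\gfS$, so both of the $\Hom$\-/sets appearing in the lemma are connected, through the intermediate set $\Graph_{/\gfB^*}(\gfG\times_\gfA\gfS,\gfH^\sharp)$, by maps already constructed.

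First I would set $\gfG':=\gfG\times_\gfA\gfS$, which is a $\gfB^*$\-/labelled graph via $\beta$ composed with the projection $\gfG'\to\gfS$, and record that $(\gfG')^\flat=\gfG\rtimes\gfS$. Applying Lemma~\ref{lem:adjoint1} with $\gfG_1=\gfG$, $\gfG_2=\gfS$, $\gfG_3=\gfH^\sharp$, and with the base label graph ``$\gfB$'' of that lemma instantiated here as $\gfB^*$ (so that $\gfS$ carries the required ``$\gfB$''\-/labelling $\beta\colon\gfS\to\gfB^*$ and the $\gfA$\-/labelling $\alpha\colon\gfS\to\gfA$, and $\gfH^\sharp$ carries its $\gfB^*$\-/labelling), yields a natural bijection
\[
\Graph_{/\gfB^*}(\gfG\times_\gfA\gfS,\gfH^\sharp)\;\cong\;\Graph_{/\gfA}\bigl(\gfG,(\gfH^\sharp)^\gfS\bigr)\;=\;\Graph_{/\gfA}(\gfG,\gfH^\gfS).
\]
The only verification here is that the ``$\gfB$''\-/labelling on $\gfS$ feeding the exponential $(\gfH^\sharp)^\gfS$ is indeed $\beta$ and that the morphisms in that exponential lie in $\Graph_{/\gfB^*}$; but this is exactly how $\gfH^\gfS$ was defined just above the lemma, so the hypotheses of Lemma~\ref{lem:adjoint1} are literally met.

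Next I would invoke Lemma~\ref{lem:notadjoint} with $\gfG'=\gfG\times_\gfA\gfS\in\Graph_{/\gfB^*}$ and $\gfH\in\Graph_\gfB$. It provides a map
\[
\Hom(\gfG\times_\gfA\gfS,\gfH^\sharp)\;\longrightarrow\;\Hom\bigl((\gfG\times_\gfA\gfS)^\flat,\gfH\bigr)\;=\;\Graph_{/\gfB}(\gfG\rtimes\gfS,\gfH),
\]
which gives the ``whenever'' implication: from a morphism $\gfG\to\gfH^\gfS$, transport along the bijection of the previous paragraph to a morphism $\gfG\times_\gfA\gfS\to\gfH^\sharp$, then push it forward to $\Graph_{/\gfB}(\gfG\rtimes\gfS,\gfH)$. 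For the converse under the extra hypothesis, I would note that ``$\gfG\rtimes\gfS$ weakly \'etale'' is precisely ``$(\gfG')^\flat$ weakly \'etale'', under which Lemma~\ref{lem:notadjoint} also supplies a map in the reverse direction, so that the two $\Hom$\-/sets $\Hom(\gfG\times_\gfA\gfS,\gfH^\sharp)$ and $\Graph_{/\gfB}(\gfG\rtimes\gfS,\gfH)$ are simultaneously empty or nonempty; composing with the bijection of the previous paragraph finishes the proof.

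I do not anticipate any real obstacle: the work is entirely bookkeeping, keeping straight which morphisms live over $\gfA$, over $\gfB$, and over $\gfB^*$, and checking that the instance of Lemma~\ref{lem:adjoint1} in play is the one whose ambient label graph is $\gfB^*$ and not $\gfB$. If desired I would add, in parallel with the remarks after Lemmas~\ref{lem:adjoint1} and~\ref{lem:notadjoint}, that the composite $\Graph_{/\gfA}(\gfG,\gfH^\gfS)\to\Graph_{/\gfB}(\gfG\rtimes\gfS,\gfH)$ is natural and is a bijection when $\gfG\rtimes\gfS$ is weakly \'etale (indeed an adjunction isomorphism once one restricts to the subclass of graphs in which every edge\-/vertex lies on a coherent path), but for the applications to tiling problems only the emptiness statement is used.
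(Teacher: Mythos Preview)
Your proposal is correct and follows exactly the paper's own argument: the paper's proof simply says ``This follows immediately from Lemma~\ref{lem:adjoint1} and Lemma~\ref{lem:notadjoint},'' and then writes out the chain $\Graph_{/\gfB}((\gfG\times_\gfA\gfS)^\flat,\gfH)\leftrightarrow\Graph_{/\gfB^*}(\gfG\times_\gfA\gfS,\gfH^\sharp)\cong\Graph_{/\gfA}(\gfG,\gfH^\gfS)$, which is precisely your two-step composition. Your instantiation of Lemma~\ref{lem:adjoint1} over $\gfB^*$ and your identification of the weakly \'etale hypothesis with that of Lemma~\ref{lem:notadjoint} are both correct.
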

\begin{proof}
  This follows immediately from Lemma~\ref{lem:adjoint1} and
  Lemma~\ref{lem:notadjoint}. For the second claim,
  $\Graph_{/\gfB}(\gfG\rtimes\gfS,\gfH)=\Graph_{/\gfB}((\gfG\times_\gfA\gfS)^\flat,\gfH)$
  is empty if and only if
  $\Graph_{/\gfB^*}(\gfG\times_\gfA\gfS,\gfH^\sharp)\cong\Graph_{/\gfA}(\gfG,\gfH^\gfS)$
  is empty.
\end{proof}

\noindent As could be expected, simulation is transitive:
\begin{lem}\label{lem:simultrans}
  If $\gfG$ simulates a graph $\gfH$, and $\gfH$ simulates a graph $\mathscr K$, then $\gfG$ simulates $\mathscr K$.
\end{lem}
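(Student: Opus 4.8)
The plan is to produce, from the two given finite simulators, a single finite simulator realizing the composite. Suppose $\gfG$ is $\gfA$-labelled, $\gfH\cong\gfG\rtimes\gfS$ is $\gfB$-labelled for a finite $(\gfA,\gfB)$-simulator $\gfS$, and $\mathscr K\cong\gfH\rtimes\gfT$ is $\gfC$-labelled for a finite $(\gfB,\gfC)$-simulator $\gfT$. Writing $\gfG\rtimes\gfS=(\gfG\times_\gfA\gfS)^\flat$, I want to exhibit a finite $(\gfA,\gfC)$-simulator $\gfU$ with $\mathscr K\cong\gfG\rtimes\gfU$. The natural candidate is to set $\gfU$ to be something like $(\gfS\times_\gfB\gfT)^{\sharp/\flat}$ --- more precisely, one must compose the path-subdivision bookkeeping of the two stages. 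I would first unwind what $\gfH\rtimes\gfT$ is: its edges are coherent paths in $\gfH\times_\gfB\gfT$ whose $\gfT$-labels read $(0,c,1)(1,c,1)\cdots(1,c,0)$ for $c\in E(\gfC)$. Each such step in turn lives over an edge of $\gfH$, which is itself a coherent path in $\gfG\times_\gfA\gfS$.

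The key steps, in order, would be: (1) Set up the ``composite simulator'' $\gfU$ whose underlying graph is built from $\gfS$, the subdivision $\gfS^*$, and $\gfT$, so that a single edge of $\gfU$ encodes a full run of the $\gfT$-automaton, each of whose elementary moves is expanded into a full run of the $\gfS$-automaton. Concretely $\gfU=\bigl(\gfS\times_\gfB \gfT^{\text{(something)}}\bigr)$ with its two labellings: the $\gfA$-labelling inherited from $\gfS\to\gfA$, and the $\gfC^*$-labelling built by flattening the $\gfS$-layer (turning each $\gfS$-run into one edge of $\gfB$, hence one step of a $\gfT$-run) and then recording the position within the $\gfT$-run as an element of $\gfC^*$. (2) Check that $\gfU$ is finite: this is immediate since $\gfS$ and $\gfT$ are finite and $\gfB^*,\gfC^*$ are finite, and the fibre product of finite graphs is finite. (3) Establish the natural isomorphism $\gfG\rtimes\gfU\cong(\gfG\rtimes\gfS)\rtimes\gfT$ by matching up vertices and edges on both sides: a vertex of the left-hand side is a compatible pair (vertex of $\gfG$, vertex of $\gfU$), which unwinds to a vertex of $\gfG$ together with compatible vertices of $\gfS$ and $\gfT$; an edge is a coherent path, which unwinds to a coherent $\gfT$-path each of whose elementary edges is a coherent $\gfS$-path. (4) Conclude $\gfG\rtimes\gfU\cong\mathscr K$ by composing with the given isomorphisms $\gfG\rtimes\gfS\cong\gfH$ and $\gfH\rtimes\gfT\cong\mathscr K$, after noting that $\rtimes$ is functorial in the $\gfA$-labelled graph argument, so an isomorphism $\gfG\rtimes\gfS\cong\gfH$ of $\gfB$-labelled graphs induces $(\gfG\rtimes\gfS)\rtimes\gfT\cong\gfH\rtimes\gfT$.

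The main obstacle I expect is step (3): the operation $\gfX\mapsto\gfX^\flat$ is not literally an adjoint or a plain functor composed with products, so ``flattening twice'' does not formally commute with ``flattening the composite'' on the nose --- one has to be careful that coherent paths in the two-stage construction correspond bijectively to coherent paths in the one-stage construction, including the edge-vertices and the possibility of non-terminating runs. In particular I would need to decide whether $\gfU$ should carry the $\gfS^*$/$\gfS^\sharp$ decoration (to track intermediate, possibly non-terminating, inner computations) and how the sink vertices of the inner flattening interact with the outer one; the cleanest route is probably to phrase the correspondence at the level of paths and check coherence stepwise, rather than invoking Lemma~\ref{lem:almostadjoint} directly, since that lemma is about non-emptiness of hom-sets rather than an isomorphism of graphs. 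Once the path-level bijection is verified to respect heads, tails, reversal, and both labellings, the isomorphism $\gfG\rtimes\gfU\cong\mathscr K$ follows and the lemma is proved.
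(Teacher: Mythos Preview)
Your plan is on the right track and aims at the same construction as the paper, but two concrete ingredients are missing that the paper supplies and without which your $\gfU$ is not yet well-defined.

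First, the fibre product cannot be taken over $\gfB$: the second labelling of $\gfS$ lands in $\gfB^*$, not in $\gfB$, so there is nothing to match against a $\gfB$-labelled $\gfT$. The fix is to promote $\gfT$ to $\gfT^\sharp$, which is $\gfB^*$-labelled, and set the underlying graph of the composite simulator to be $\gfS\times_{\gfB^*}\gfT^\sharp$. (Your parenthetical ``$\gfS^*/\gfS^\sharp$ decoration'' is the right instinct but applied to the wrong factor: it is $\gfT$ that needs the $\sharp$.) Second, this object inherits a $\gfC^{**}$-labelling from $\gfT^\sharp$, with edge labels of the form $(i,(i',c,j'),j)$; to obtain an $(\gfA,\gfC)$-simulator you must collapse this to a $\gfC^*$-labelling. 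The paper does this by an ad hoc operation $\natural$ sending $(i,(i',c,j'),j)\mapsto(\max(i,i'),c,\max(j,j'))$, and then defines $\gfU=(\gfS\times_{\gfB^*}\gfT^\sharp)^\natural$. With these two moves in place the verification becomes a short algebraic chain
\[
\gfG\rtimes\gfU
\cong(\gfG\times_\gfA\gfS\times_{\gfB^*}\gfT^\sharp)^{\natural\flat}
\cong((\gfG\times_\gfA\gfS)^{\flat\sharp}\times_{\gfB^*}\gfT^\sharp)^{\natural\flat}
\cong(\gfH\times_\gfB\gfT)^{\sharp\natural\flat}
\cong(\gfH\times_\gfB\gfT)^\flat
=\gfH\rtimes\gfT,
\]
using that $\natural$ and $\sharp$ commute with fibre products. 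Your proposed alternative of a direct path-level bijection is exactly what this algebra encodes, and would also work, but is what the $\natural$ trick is designed to abbreviate.
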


\begin{proof}
  The idea of the proof is straightforward: run the simulation of
  $\mathscr K$ as a subroutine within the simulation of $\mathscr
  H$. Here are the details.
  
  Let $\gfS$ be a $(\gfA,\gfB)$-simulator expressing the simulation
  $\gfH\cong\gfG\rtimes\gfS$, and let $\mathscr T$ be a
  $(\gfB,\gfC)$-simulator expressing the simulation
  $\gfH\rtimes\mathscr T\cong\mathscr K$. Then $\mathscr T^\sharp$ is
  a $\gfB^*$- and $\gfC^{**}$-labelled graph; its labels in
  $\gfC^{**}$ take the form $(i,(i',c,j'),j)$. Only for this proof,
  let $\natural$ be the operation that replaces every such label by
  $(\max(i,i'),c,\max(j,j'))$, changing the $\gfC^{**}$-labelling into
  a $\gfC^*$-labelling. Set
  $\mathscr U=(\gfS\times_{\gfB^*}\mathscr T^\sharp)^\natural$, and
  note that it is an $(\gfA,\gfC)$-simulator. We have, noting that the
  operations $()^\sharp$ and $()^\natural$ commute with products,
  \begin{align*}
    \gfG \rtimes\mathscr U
    &=(\gfG\times_\gfA\mathscr U)^\flat
      =(\gfG\times_\gfA(\gfS\times_{\gfB^*}\mathscr T^\sharp)^\natural)^\flat\\
    &\cong(\gfG\times_\gfA\gfS\times_{\gfB^*}\mathscr T^\sharp)^{\natural\flat}
      \cong((\gfG\times_\gfA\gfS)^{\flat\sharp}\times_{\gfB^*}\mathscr T^{\sharp})^{\natural\flat}\\
    &=(\gfH^\sharp\times_{\gfB^*}\mathscr T^\sharp)^{\natural\flat}
      \cong(\gfH\times_\gfB\mathscr T)^{\sharp\natural\flat}\\
    &\cong(\gfH\times_\gfB\mathscr T)^\flat
      =\gfH\rtimes\mathscr T
      \cong\mathscr K.\qedhere
  \end{align*}
\end{proof}

Note that the $\flat$-operation removes the sinks introduced by
$T \mapsto T^\sharp$, so without changing the previous proof, one can
shave off a small number of vertices by using
$\gfS \times \mathscr T^*$ as the simulator.

Robinson's tileset may be used to prove the undecidability of the
tiling problem for $\Z^2$ by simulating seeded $\Z^2$-tilings by
unseeded ones. The following example exhibits the simulator part of
the argument.  We shall return to it in Example~\ref{ex:Robinson} and
explain how the undecidability of the tiling problem of $\Z^2$ is
obtained from this.

\begin{exple}
\label{ex:CompressingRectangles}
Let $S = \{(1,0), (0,1), (-1,0), (0,-1)\}$ be the standard generating
set for $\Z^2$ and let $\gfB = 1 \sqcup S$ be the graph used for the
Cayley graph labelling.  Consider the induced subgraph $R$ with vertex
set $\{0,1,\dots,n-1\}^2 \subset \Z^2$. Consider a labelled graph
$\alpha\colon R \to \gfA$, for the product graph
\[\gfA = (1\sqcup S) \times (\{\textrm{good},\textrm{bad}\} \sqcup \{\textrm{good},\textrm{bad}\}^2).\]
Suppose that the label in $(1\sqcup S)$ is just the induced Cayley graph
labelling from $\Z^2$, and suppose that the subset $S' \subset V(S)$
of good vertices is a set-theoretic rectangle:
\[ \exists A, B \subset \{0,1,\dots,n-1\}: \alpha^{-1}(\{(1,\textrm{good})\}) = A \times B. \]

Then consider the following $(\gfA,\gfB)$-simulator on $S$. The vertex
labelled ``good'' has $\gfA$-label $(1,\mathrm{good})$ and
$\gfB^*$-label $1 \in V(\gfB)$, while the split node labelled
``bad/$s$'' has $\gfA$-label $(1,\mathrm{bad})$ and $\gfB^*$-label
$s \in S = E(\gfB)$; $s$ ranges over $S$ so the figure is short for a
$5$-vertex graph.

\begin{center}
  \begin{tikzpicture}[every state/.style={inner sep=1pt,minimum size=4mm}]
    \node[state] (g) at (0,0) {good};
    \ellipsesplitnode{b}{(5,0,0)}{4mm}{3mm}{bad}{$s$}
    \draw[->] (g) edge[bend left=15] node[above] {$s|(0,s,1)$} (b);
    \draw[->] (b) edge[bend left=15] node[below] {$s|(1,s,0)$} (g);
    \draw[->] (g) edge[in=225,out=135,distance=15mm] node[left] {$s|(0,s,0)$} (g);
    \draw[->] (b) edge[in=45,out=-45,distance=15mm] node[right] {$s|(1,s,1)$} (b);
  \end{tikzpicture}
\end{center}

In words, this simulator compresses runs over bad nodes into an edge,
and thus compresses the set-theoretic rectangle $A \times B$ into an
actual rectangle. The $\gfB$-labelling is the one induced from
$\{0,1,\dots,|A|-1\} \times \{0,1,\dots,|B|-1\} \subset \Z^2$.  In
terms of the graph-walking automata introduced in the following
section, the automaton ``skips over'' the bad nodes.
\end{exple}

\subsection{Graph-walking automata}
Simulators can be seen as a compact way of expressing two things at
once: vertex duplication (when we want to simulate multiple nodes in
one actual nodes), and coalescence of paths into single edges based on
the labels of connecting paths.

In this section, we re-express simulators explicitly in terms of vertex blow-ups,
graph-walking automata, and formal languages. We show that these points of view are equivalent to the
simulators introduced in the previous section; while simulators tend
to be convenient in proofs, graph-walking automata are preferable
for describing concrete simulations.

\begin{defn}[Vertex blow-up]
  Let $\gfA$ be a graph, and let $k\colon v(\gfA) \to \N$ be
  a function. The \emph{vertex blow-up of $\gfA$} by $k$ is the
  graph with nodes
  \[ V(\gfA^k) = \{(u, i)\in V(\gfA)\times\N : i<k(u)\}\]
  and edges
  \[ E(\gfA^k) = \{(i,e,j) \in \N\times E(\gfA) \times \N: i < k(e^-), j < k(e^+)\}\]
  with $(i,e,j)^- = (e^-,i)$ and $(i,e,j)^+ = (e^+,j)$. 
\end{defn}

\noindent We extend this to labelled graphs by blowing-up the graph
used for the labels:
\begin{defn}
  Let $\alpha\colon\gfG \to \gfA$ be an $\gfA$-labelled graph, and let $k\colon V(\gfA) \to \N$
  be a function.  The \emph{vertex blow-up of $\gfG$} by $k$ is defined
  as the graph with nodes
  \[ V(\gfG^k) = \{(u, i)\in V(\gfG)\times\N: i<k(\alpha(u))\} \]
  and edges
  \[ E(\gfG^k) = \{(i,e,j) \in \N\times E(\gfG) \times \N: i<k(e^-), j<k(e^+)\}
  \]
  with $(i,e,j)^- = (e^-,j)$ and $(i,e,j)^+ = (e^+,j)$ and labels in
  $\gfA^k$ defined by $\alpha((u,i)) = (\alpha(u),i)$.
\end{defn}

It is sometimes convenient to blow up vertices and give them more
memorable names than numbers, and use a function
$k\colon V(\gfA) \to\{\text{finite subsets of } U\}$ instead, for some
universe $U$. To interpret this, fix bijections between $k(v)$ and
$\{0,\dots,|k(v)|-1\}$ for all $v \in V(\gfA)$ and conjugate all
arguments through this bijection.

\begin{defn}[Graph-walking automata]
  Let $\alpha\colon \gfG \to \gfA$ be an $\gfA$-labelled graph. A \emph{graph-walking
    automaton} (short: \emph{GWA}) on $\gfG$ is a tuple
  $M = (Q,S,F,\Delta)$ with $Q$ a finite set of \emph{states},
  $S \subset Q$ a set of \emph{initial states}, $F \subset Q$ a set of
  \emph{final states}, and $\Delta \subset Q \times E(\gfA) \times Q$
  a \emph{transition relation}. We require $I \cap F = \emptyset$.
  For $u, v \in V(\gfG)$, an \emph{$M$-run from $u$ to $v$} is a
  sequence $(q_0, e_0, q_1,\cdots, e_{k-1}, q_k)$ with for all $i<k$
  \[ q_0 \in I, q_k \in F, q_i \in Q, e_i \in E(\gfG), e_0^- = u,
    e_{k-1}^+ = v, (q_i, \alpha(e_i), q_{i+1}) \in \Delta.
  \]
  For $u \in V(\gfG)$, the \emph{$M$-successors} of $u$ are the nodes
  $v \in V(\gfG)$ such that there exists an $M$-run from $u$ to $v$.
\end{defn}

The basic idea is that runs will be replaced by edges, just as paths
are replaced by edges in the definition of simulators. Note that
$I \cap F=\emptyset$ implies that all runs are of length at least one.

\begin{defn}[GWA simulator]
  For $\gfG$ an $\gfA$-labelled graph and $\gfB$ is a finite
  graph, a \emph{$\gfB$-labelled GWA simulator} $S$
  consists of the following data:
  \begin{enumerate}
  \item a function $\lambda\colon V(\gfA) \to V(\gfB) \cup \{\bot\}$;
  \item for each $e \in E(\gfB)$, a GWA $M_e$.
  \end{enumerate}
  The GWA $M_e$ is required to have the property that if
  $u \in V(\gfG)$ and $\lambda(\alpha(u)) \neq e^-$, then $u$ has no
  $M_e$-successors, and all $M_e$-successors $v$ of $u$ satisfy
  $\lambda(\alpha(v)) = e^+$.  The GWA simulator $S$ \emph{simulates}
  the $\gfB$-labelled graph $\gfH$ with vertices
  \[ V(\gfH) = \{v \in V(\gfG) : \lambda(v) \neq \bot\} \]
  and edges
  \[ E(\gfH) = \{(u,e,v)\in V(\gfG)\times E(\gfB)\times V(\gfG): \text{ there is an $M_e$-run from $u$ to $v$}\} \]
  with $(u,e,v)^- = u, (u,e,v)^+ = v$ and $\gfB$-labelling $\beta(v) = \lambda(\alpha(v))$, $\beta((u,e,v)) = e$.
\end{defn}
Observe that the extra requirement on the GWA $M_e$ is for convenience
only: we can easily modify any GWA so that it satisfies this property.

\begin{defn}
  Let $\gfG$ be an $\gfA$-labelled graph. We say $\gfG$
  \emph{GWA-simulates} a $\gfB$-labelled graph $\gfH$ if there exists a
  $\gfB$-labelled GWA simulator that simulates $\gfH$.
\end{defn}

\begin{defn}[Regular subdivision]
  Let $\gfG$ be an $\gfA$-labelled graph and let $\gfB$ be a finite
  graph. Suppose that we are given
  \begin{enumerate}
  \item a function $\lambda\colon V(\gfA) \to V(\gfB) \cup \{\bot\}$, and
  \item for each $e \in E(\gfB)$, a regular language
    $L_e \subset E(\gfA)^+$ of nontrivial words.
  \end{enumerate}
  Suppose further that every word $w=w_0\dots w_{\ell-1}\in L_e$ is a
  path in $\gfA$ satisfying $\lambda(w_0^-) = e^-$ and
  $\lambda(w_{\ell-1}^+) = e^+$.  Then the graph $\gfH$ with vertices
  \[ V(\gfH) = \{v \in V(\gfG): \lambda(v) \neq \bot\} \]
  and edges
  \[ E(\gfH) = \{(u,e,v)\in V(\gfG)\times E(\gfB)\times V(\gfG): \text{ there is an $L_e$-labelled path from $u$ to $v$} \} \]
  with $(u,e,v)^- = u, (u,e,v)^+ = v$ and $\gfB$-labelling
  $\beta(v) = \lambda(\alpha(v))$, $\beta((u,e,v)) = e$, is called a
  \emph{regular $\gfB$-subdivision} of $\gfG$.
\end{defn}

\noindent With these definitions in place, let us prove the
equivalence of these notions:
\begin{lem}\label{lem:gwa}
  Let $\gfG$ be an $\gfA$-labelled graph. Then the following are equivalent:
  \begin{enumerate}
  \item $\gfG$ simulates $\gfH$ up to full simplification,
  \item some vertex blow-up of $\gfG$ GWA-simulates $\gfH$,
  \item $\gfH$ is a regular $\gfB$-subdivision of a vertex blow-up of $\gfG$.
  \end{enumerate}
\end{lem}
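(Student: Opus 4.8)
The plan is to prove the cycle of implications $(1)\Rightarrow(3)\Rightarrow(2)\Rightarrow(1)$, translating back and forth between the categorical simulator formalism and the operational GWA/regular-language formalism. Throughout, the bridge is the identification of a path in the pullback $\gfG\times_\gfA\gfS$ (projected down to $\gfB^*$ as a coherent path) with a walk of an automaton reading edge labels of $\gfG$, and of the vertex-duplication built into $\gfS$ (two labellings $\alpha\colon\gfS\to\gfA$, $\beta\colon\gfS\to\gfB^*$ with possibly many $\gfS$-vertices over one $\gfA$-vertex mapping to the same $\gfB$-vertex) with a vertex blow-up.

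First, $(1)\Rightarrow(3)$. Suppose $\gfG$ simulates $\gfK$ with $\gfK$'s full simplification equal to $\gfH$, via a finite simulator $\gfS$, so $\gfK\cong(\gfG\times_\gfA\gfS)^\flat$. Let $n$ be the maximal number of $\gfS$-vertices lying over a single $\gfA$-vertex; define the blow-up function $k\colon V(\gfA)\to\N$ by $k(a)=|\alpha^{-1}(a)\cap V(\gfS)|$. I would build the regular $\gfB$-subdivision data on $\gfG^k$ as follows: set $\lambda(a,i)$ to be the $\gfB$-vertex $\beta(s)$ where $s$ is the $i$-th $\gfS$-vertex over $a$ when $\beta(s)\in V(\gfB)$, and $\bot$ when $\beta(s)\in E(\gfB)$ (i.e.\ $s$ is an ``intermediate'' vertex). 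For each $e\in E(\gfB)$, let $L_e\subseteq E(\gfA^k)^+$ be the language of $\alpha^k$-label-sequences of coherent paths in $\gfG\times_\gfA\gfS$ that project under $\beta$ to a walk $((0,e,1),(1,e,1),\dots,(1,e,1),(1,e,0))$; reading off the finite automaton whose states are the $\gfS$-vertices over the source/intermediate/target of $e$ and whose transitions are edges of $\gfS$ with the appropriate $\beta$-label shows $L_e$ is regular. The finiteness of $\gfS$ is exactly what makes the $L_e$ regular (finitely many states). One then checks that the edges of the resulting regular $\gfB$-subdivision of $\gfG^k$ are in bijection with coherent paths in $\gfG\times_\gfA\gfS$ up to identification of parallel edges, i.e.\ with the edges of $\gfK$ up to full simplification, which is $\gfH$ — here the ``full simplification'' in $(1)$ is needed precisely because the subdivision definition, like the GWA definition, only records triples $(u,e,v)$ and so automatically merges parallel $\gfB$-edges and cannot produce self-loops that collapse.

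Next, $(3)\Rightarrow(2)$: a regular language is recognized by a finite automaton, so converting each $L_e$ to an NFA reading $E(\gfA^k)$-labels and declaring its states/start/accept sets gives a GWA $M_e$ on $\gfG^k$; the side condition on $M_e$ (that $M_e$-successors respect $\lambda$ on source and target) is arranged by the hypothesis on $L_e$ that every word is a path from a $\lambda^{-1}(e^-)$-vertex to a $\lambda^{-1}(e^+)$-vertex, possibly after the harmless modification noted after the GWA-simulator definition. The $M_e$-runs from $u$ to $v$ then correspond exactly to $L_e$-labelled paths, so $\gfG^k$ GWA-simulates the same $\gfH$. Finally, $(2)\Rightarrow(1)$: given a GWA simulator on some blow-up $\gfG^k$, I would assemble a finite $(\gfA^k,\gfB)$-simulator $\gfS$ whose vertices are the states of the various $M_e$ (plus one ``resting'' vertex per $\gfB$-vertex), with $\alpha$-labels recording which $\gfG^k$-vertex-label the state sits over and $\beta$-labels in $\gfB^*$ recording whether the state is a start ($0$), intermediate ($1$ on both sides), or accept state relative to $e$; transitions of $\Delta$ become edges of $\gfS$. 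Then $(\gfG^k\times_{\gfA^k}\gfS)^\flat$ is, by unwinding $\flat$, the graph whose edges are coherent paths = accepting runs, which is $\gfH$ up to the same parallel-edge/self-loop merging, i.e.\ $\gfG^k$ simulates $\gfH$ up to full simplification; and a blow-up $\gfG^k$ is itself simulated by $\gfG$ via a trivial (no-subdivision, vertex-splitting) simulator, so by transitivity (Lemma~\ref{lem:simultrans}) $\gfG$ simulates $\gfH$ up to full simplification.

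The main obstacle I anticipate is purely bookkeeping: getting the three bookkeeping conventions — the index set of the blow-up versus the fibres of $\alpha$, the role of intermediate ($\gfB^*\setminus\gfB$) vertices versus automaton states, and the precise meaning of ``up to full simplification'' (parallel edges and self-loops) — to line up on the nose across all three formalisms, so that the correspondences are genuine isomorphisms of $\gfB$-labelled graphs and not merely ``morally the same.'' In particular one must be careful that the ``full'' in ``full simplification'' (removal of self-loops) is unavoidable, since a GWA run from $u$ to $u$ produces a loop $(u,e,u)$ that the edge-set formula $E(\gfH)$ still records as a single edge but which a general simulator could in principle multiply; checking this does not cause a genuine discrepancy is the one place where a little care, rather than routine verification, is required.
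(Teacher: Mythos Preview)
Your proposal is correct and follows essentially the same approach as the paper: blow up by the fibres of $\alpha\colon\gfS\to\gfA$, read GWA states (equivalently, regular-language automata) off the $\gfS$-vertices with $\gfB^*$-label $e$, and for the converse assemble a simulator from the disjoint union of the automaton state-sets together with ``resting'' vertices, invoking transitivity (Lemma~\ref{lem:simultrans}) to absorb the blow-up. The only differences are cosmetic: you run the cycle $(1)\Rightarrow(3)\Rightarrow(2)\Rightarrow(1)$ whereas the paper does $(1)\Leftrightarrow(2)$ and $(2)\Leftrightarrow(3)$, and in your $(2)\Rightarrow(1)$ the resting vertices should be indexed by $V(\gfA^k)$ (with $\gfB^*$-label $\lambda$) rather than by $V(\gfB)$ as you wrote---otherwise the $\gfA^k$-labelling of $\gfS$ is not well defined.
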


Simplification is only a technicality: it is natural to require it in
an automata-theoretic setting, by including only one edge if a run
exists, while in the categorical setting of graphs we find it more
natural to include an edge for each run. Simplification translates
between these conventions. Full simplification (including self-loops)
is mainly for notational convenience; self-loops have little effect on
the tiling problems we are concerned with, and their presence
simplifies some proofs.

\begin{proof}
  $(1)\Rightarrow(2)$. Let $\gfG$ simulate $\gfH$ up to simplification by some simulator
  $\gfS$; we denote by $\alpha\colon \gfG \to \gfA$ and
  $\alpha_\gfS\colon\gfS\to \gfA$ and
  $\beta_\gfS\colon\gfS \to \gfB^*$ the respective labellings. We start
  by blowing up the vertices of $\gfG$ using the function
  $k\colon V(\gfA) \to\{\text{ subsets of }V(\gfS)\}$ defined by
  \[ k(u) = \{v \in V(\gfS) : \alpha_\gfS(v) = u \}.
  \]
  Then the graph $\gfG^k$ has vertices $(w,v)$ with $w \in V(\gfG)$
  and $v \in k(\alpha(w))$. For the function $\lambda$ we pick
  $\lambda(v) = \beta_\gfS(v)$ whenever $\beta_\gfS(v) \in V(\gfB)$,
  and $\lambda(v) = \bot$ whenever
  $v \in V(\gfB^*) \setminus V(\gfB)$.

  The vertices of both $\gfG\rtimes\gfS$ and $\gfH$ are by
  definition pairs $(w, v)$ where $\alpha(w) = \alpha(v)$ and
  $\beta(v) \in \gfB$. No matter how we pick the automata $M_e$, the
  vertices of the GWA-simulated graph will be
  $\{(w, v): \lambda(v) \neq \bot\}$. These sets are equal, so also in
  the simulated graph the vertex set will literally be
  $V(\gfG\rtimes\gfS)$.

  Now, for $e \in E(\gfB)$, we pick the automata $M_e$. By the
  definition of simulation up to simplification, in $\gfH$ there is at most one edge with
  label $e$ between $w \in V(\gfH)$ and $w' \in V(\gfH)$, and there is
  such an edge precisely when $w \neq w'$ and we are in one of the following cases:
  \begin{enumerate}
  \item there is an edge $(e_\gfG, e_\gfS) \in \gfG\rtimes\gfS$ with
    $e_\gfG^- = w$, $e_\gfG^+ = w'$ and
    $\beta_\gfS(e_\gfS) = (0,e,0)$, or
  \item there is a path of length at least two in $\gfG\rtimes\gfS$
    from $w$ to $w'$, such that the following three conditions are satisfied:
    \begin{enumerate}
    \item the first edge $(e_\gfG, e_\gfS)$ on the path satisfies
      $\beta_\gfS(e_\gfS) = (0,e,1)$,
    \item the last edge $(e_\gfG, e_\gfS)$ on the path satisfies
      $\beta_\gfS(e_\gfS) = (1,e,0)$,
    \item all other edges $(e_\gfG, e_\gfS)$ on the path satisfy
      $\beta_\gfS(e_\gfS) = (1,e,1)$.
    \end{enumerate}
  \end{enumerate}
  
  We are now ready to construct a GWA $M = M_e$ such that there is an
  $M$-run from $w$ to $w'$ if one of these cases occurs. Such a GWA is
  obtained from the $\gfB^*$-labelled graph $(\gfS, \beta_\gfS)$ as
  follows:
  \begin{enumerate}
  \item Make three disjoint copies of $V(\gfS)$, say
    $S_i = V(\gfS) \times \{i\}$ for $i=1, 2, 3$.
  \item In $S_1$ retain only vertices $(v,1)$ with
    $\beta_\gfS(v) = e^-$, remove others.
  \item In $S_2$ retain only vertices $(v,2)$ with
    $\beta_\gfS(v) = e$, remove others.
  \item In $S_3$ retain only vertices $(v,3)$ with
    $\beta_\gfS(v) = e^+$, remove others.
  \item Set $Q = S_1 \cup S_2 \cup S_3$ as the set of states.
  \item Construct then $\Delta\subset Q\times E(\gfA)\times Q$ as
    follows: for each edge $e_\gfS \in E(\gfS)$, say with
    $\beta_\gfS(e_\gfS) = (i,e,j)$, include
    $((e^-, 1+i), \alpha_\gfS(e_\gfS), (e^+, 3-j))$ in $\Delta$.
  \item As initial states choose $I = S_1$, and as final states
    $F = S_3$.
  \end{enumerate}

  Setting $M = (Q,I,F,\Delta)$, we see that $M$-runs are in bijective
  correspondence with the $\gfG$-paths and $\gfG$-edges defining edges
  of $\gfG\rtimes\gfS$ with label $e$, in particular there is an edge
  with label $e$ from $w$ to $w'$ if and only if there is an $M$-run
  from $w$ to $w'$. This concludes the proof that if $\gfG$ simulates
  $\gfH$, then a vertex blow-up of $\gfG$ GWA-simulates $\gfH$.

  $(2)\Rightarrow(1)$. Recall from Lemma~\ref{lem:simultrans} that
  simulation is transitive, so it suffices to show separately that if
  $\gfH$ is a vertex blow-up of $\gfG$ then $\gfG$ simulates $\gfH$,
  and that if $\gfG$ GWA-simulates $\gfH$, then it simulates $\gfH$.

  Vertex blow-ups are in fact a special case of simulation: if
  $k\colon V(\gfA) \to \N$ is a function, let $\gfS$ be the graph
  $\gfA^k$; with $\gfB=\gfA^k$ and $\alpha\colon\gfS\to\gfA$ the
  canonical labelling of $\gfA^k$ and
  $\beta\colon\gfS\to\gfB\to\gfB^*$ the identity and natural
  inclusion, we see that $\gfG\rtimes\gfS$ is precisely the vertex
  blow-up of $\gfG$ by $k$.

  We next show that if $\gfG$ GWA-simulates $\gfH$, then it simulates
  $\gfH$. For $e\in\gfB$, let $Q_e\supseteq I_e,F_e$ be the stateset,
  initial and final states of the automaton $M_e$, and assume all
  $Q_e$ are disjoint.  The nodes of the simulator $\gfS$ will be
  \[ V(\gfS) = V(\gfA) \sqcup \bigsqcup_{e \in \gfB} Q_e.
  \]
  The $\gfB^*$-labelling on vertices is given by
  $\beta(u) = \lambda(u)$ for $u \in V(\gfA)$, and $\beta(q) = e$ if
  $q \in Q_e$. In $E(\gfS)$ we include an edge with $\gfB^*$-label
  $(0,e,0)$ from $e^-$ to $e^+$ if there is a run of length one in
  $M_e$, with a suitable $\alpha$-label, namely if there exists
  $(s,\alpha(e'),t) \in \Delta$ with $s \in I_e$, $t \in F_e$, and
  $\beta(e') = e$.

  For each transition $(u, e', u')$ of $M_e$ (so $u, u' \in Q_e$,
  $e' \in E(\gfA)$), we include an edge in $\gfS$ with $\alpha$-label
  $e'$ and $\beta$-label $(1,e,1)$ from $u$ to $u'$. For each
  $u \in V(\gfA)$, we include an edge with $\alpha$-label $e'$ and
  $\beta$-label $(0,e,1)$ from $u$ to $q \in Q_e$ whenever there is
  $(s,e',q) \in \Delta$ with $s \in I$. Symmetrically, for each
  $u \in V(\gfA)$, we include an edge with $\alpha$-label $e'$ and
  $\beta$-label $(1,e,0)$ from $q \in Q_e$ to $u$ whenever there is
  $(q,e',s) \in \Delta$ with $s \in F$.

  Then the simulated graph $\gfG\rtimes\gfS$ can be seen to be
  isomorphic to the GWA-simulated graph $\gfH$. We conclude that
  GWA-simulation and simulation are equivalent concepts.

  $(2)\Leftrightarrow(3)$. It is enough to show that $\gfH$ is a
  regular $\gfB$-subdivision of $\gfG$ if and only if $\gfG$ simulates
  $\gfH$. This is clear from the definition of a regular language,
  because the possible paths corresponding to valid $M_e$-runs for
  $M_e$ satisfying $I \cap F = \emptyset$ form precisely a regular
  language of nonempty words.
\end{proof}


\section{Subshifts}

We are ready define subshifts using the language of graphs introduced
in the previous section.

\subsection{Subshifts of finite type}

\begin{defn}
  Let $\gfG$ be a graph, possibly labelled. A \emph{directed $\Hom$-shift} or
  \emph{DHS} 
    with carrier $\gfG$ is the space $\Hom(\gfG,\gfF)$, for some
  finite graph $\gfF$.

  If $\gfG$ is $\gfA$-labelled, then $\gfF$ should also be
  $\gfA$-labelled, and the space of homomorphisms should be taken in
  the appropriate category $\Graph_{/\gfA}$.
\end{defn}

DHS are nothing more than a formalism for subshifts of finite type,
and as explained in Section~\ref{ss:cayley} this definition is 
dynamically entirely equivalent to the more standard definition
of a subshift of finite type by finitely many allowed (or forbidden)
patterns.

The topology on $\Hom(\gfG,\gfF)$ is the usual function topology;
namely, $\Hom(\gfG,\gfF)$ is a closed subset of
$V(\gfF)^{V(\gfG)}\times E(\gfF)^{E(\gfG)}$, and therefore is compact.

\begin{exple}
  Consider $\gfG=\N\sqcup \N$, with $n^+=n+1$ and
  $n^-=n$. Geometrically, it is a one-sided ray. Consider the
  following finite graphs:
  
  \centerline{\begin{tikzpicture}[every state/.style={inner sep=1pt,minimum size=4mm}]
      \node[state] (00) at (0,0) {$0$};
      \node[state] (01) at (2,0) {$1$};
      \draw[->] (00) edge[bend left=15] (01)
      (00) edge[in=225,out=135,loop] (00)
      (01) edge[bend left=15] (00)
      (01) edge[in=45,out=-45,loop] (01);
      \node at (1,-0.5) {$\gfF_1$};
      \begin{scope}[xshift=4cm]
        \node[state] (10) at (0,0) {$0$};
        \node[state] (11) at (2,0) {$1$};
        \draw[->] (10) edge[bend left=15] (11)
        (10) edge[in=225,out=135,loop] (10)
        (11) edge[bend left=15] (10);
        \node at (1,-0.5) {$\gfF_2$};
      \end{scope}
      \begin{scope}[xshift=7cm]
        \node[state] (20) at (1,0) {};
        \draw (20) edge[in=45,out=-45,loop] node[right] {$1$} (20)
        (20) edge[in=225,out=135,loop] node[left] {$0$} (20);
        \node at (1,-0.5) {$\gfF_3$};
      \end{scope}        
    \end{tikzpicture}}
  Then, in the standard symbolic dynamics terminology \cite{lind:symdyn},
  \begin{itemize}
  \item $\Hom(\gfG,\gfF_1)$ is the full vertex shift $\{0,1\}^\N$;
  \item $\Hom(\gfG,\gfF_2)$ is the ``golden mean'' shift
     $\{x\in\{0,1\}^\N: \forall n \in \N: x_nx_{n+1}=0\}$; 
  \item $\Hom(\gfG,\gfF_3)$ is the full edge shift $\{0,1\}^\N$.
  \end{itemize}
\end{exple}

\subsection{The tiling problem}

\begin{defn}\label{defn:tp}
  Let $\gfG$ be a graph, possibly labelled. The \emph{tiling problem}
  for $\gfG$ is the following decision problem:
  \begin{description}
  \item[given] a finite graph $\gfF$;
  \item[decide] is $\Hom(\gfG,\gfF)$ non-empty?
  \end{description}

  Let now $\Gamma$ be a family of graphs. The \emph{tiling problem}
  for $\Gamma$ is the problem of, given a finite graph $\gfF$,
  deciding whether $\Hom(\gfG,\gfF)$ is non-empty for at least
  one $\gfG\in\Gamma$.
\end{defn}

We call the tiling problem for $\gfG$ (respectively $\Gamma$)
\emph{solvable} if there exists an algorithm that truthfully answers 
its tiling problem.

\begin{thm}\label{thm:simul}
  Let $\gfG,\gfH$ be labelled graphs, and assume that $\gfG$ simulates
  $\gfH$ and $\gfH$ is weakly \'etale. If $\gfG$ has solvable tiling problem,
  then so does $\gfH$.
\end{thm}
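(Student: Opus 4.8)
The plan is to reduce the tiling problem for $\gfH$ to that for $\gfG$ by showing that, given a finite target graph $\gfF$, one can algorithmically produce a finite graph $\gfF'$ such that $\Hom(\gfH,\gfF)\neq\emptyset$ if and only if $\Hom(\gfG,\gfF')\neq\emptyset$. The graph $\gfF'$ should be the ``exponential'' $\gfF^\gfS$ associated to the simulator $\gfS$ witnessing $\gfG\rtimes\gfS\cong\gfH$, and the equivalence should come straight from Lemma~\ref{lem:almostadjoint}.

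In more detail, here are the steps I would carry out. First, fix a finite simulator $\gfS$ with $\gfH\cong\gfG\rtimes\gfS=(\gfG\times_\gfA\gfS)^\flat$; this exists by the definition of ``simulates''. Second, given an instance $\gfF$ of the tiling problem for $\gfH$ — i.e.\ a finite $\gfB$-labelled graph, where $\gfB$ is the label graph of $\gfH$ — form the graph $\gfF^\gfS=(\gfF^\sharp)^\gfS$, which is a finite $\gfA$-labelled graph because $\gfF$, $\gfS$, $\gfA$, $\gfB$ are all finite and the operations $(-)^\sharp$, $(-)^*$ and the exponential preserve finiteness; moreover $\gfF^\gfS$ is computable from $\gfF$ and $\gfS$. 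Third, invoke Lemma~\ref{lem:almostadjoint}: since $\gfH\cong\gfG\rtimes\gfS$ is weakly \'etale by hypothesis, the lemma gives the genuine equivalence
\[\Graph_{/\gfB}(\gfG\rtimes\gfS,\gfF)\neq\emptyset\iff\Graph_{/\gfA}(\gfG,\gfF^\gfS)\neq\emptyset,\]
that is, $\Hom(\gfH,\gfF)\neq\emptyset$ iff $\Hom(\gfG,\gfF^\gfS)\neq\emptyset$. Fourth, assemble the algorithm: given $\gfF$, compute $\gfF^\gfS$, run the assumed decision procedure for $\gfG$'s tiling problem on input $\gfF^\gfS$, and return its answer. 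This decides $\gfH$'s tiling problem, so $\gfH$ has solvable tiling problem.

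The routine-but-necessary points to nail down are that all the constructions in play ($\gfS\mapsto$ nothing, $\gfF\mapsto\gfF^\sharp$, $\gfG\mapsto\gfG^*$, and $(\gfG_1,\gfG_2)\mapsto\gfG_1^{\gfG_2}$) send finite inputs to finite outputs and are effectively computable from their defining formulas — this is immediate from the explicit descriptions given in \S\ref{ss:simulations} and the preceding subsections, but it must be stated. One should also note that the hypothesis ``$\gfH$ is weakly \'etale'' is exactly what is needed to upgrade the one-directional implication of Lemma~\ref{lem:almostadjoint} to an equivalence; without it we would only get that solvability of $\gfG$'s problem lets us detect \emph{some} tilings of $\gfH$, not decide (non)emptiness.

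I expect the main obstacle to be bookkeeping rather than mathematics: one must be careful that the ``tiling problem'' is really being run with the right label graph at each stage (an $\gfA$-labelled instance for $\gfG$, a $\gfB$-labelled instance for $\gfH$), and that Definition~\ref{defn:tp} is being applied in the slice category $\Graph_{/\gfA}$ throughout, so that the $\Hom$-sets appearing in Lemma~\ref{lem:almostadjoint} are literally the sets whose nonemptiness the tiling problems ask about. Once the categories are kept straight, the proof is a two-line reduction plus a remark on effectivity.
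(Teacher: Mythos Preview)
Your proposal is correct and follows essentially the same approach as the paper's own proof: fix the finite simulator $\gfS$, compute the finite graph $\gfF^\gfS$ from the instance $\gfF$, and apply Lemma~\ref{lem:almostadjoint} (using that $\gfH\cong\gfG\rtimes\gfS$ is weakly \'etale) to conclude that $\Hom(\gfH,\gfF)\neq\emptyset\iff\Hom(\gfG,\gfF^\gfS)\neq\emptyset$. Your additional remarks on finiteness and effectivity of the constructions, and on the precise role of the weakly \'etale hypothesis, are accurate elaborations of points the paper leaves implicit.
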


\begin{proof}
  Let $\gfF$ be an instance of the tiling problem for $\gfH$, namely a
  finite graph with same labelling as $\gfH$. Since $\gfH$ is
  simulated by $\gfG$, there exists a finite simulator $\gfS$ with
  $\gfH\cong\gfG\rtimes\gfS$. We may algorithmically compute the
  finite graph $\gfF^\gfS$, and by hypothesis we may decide whether
  $\Hom(\gfG,\gfF^\gfS)$ is non-empty. Now by
  Lemma~\ref{lem:almostadjoint} we have
  $\Hom(\gfG,\gfF^\gfS)\neq\emptyset$ if and only if
  $\Hom(\gfH,\gfF)\neq\emptyset$, so solving the tiling problem for
  $\gfG$ on $\gfF^\gfS$ solves at the same time the tiling problem for
  $\gfH$ on $\gfF$.
\end{proof}

This result extends readily to families of graphs; this is the most
general result we obtain:
\begin{defn}
  \label{def:WeaklyMaps}
  For two graphs $\gfG,\gfH$, we say that $\gfG$ \emph{weakly maps} to
  $\gfH$ if every finite subgraph of $\gfG$ maps to $\gfH$; namely
  $\Hom(\gfG',\gfH)\neq\emptyset$ for all finite subgraphs $\gfG'$ of
  $\gfG$.
\end{defn}
In case $\gfH$ is finite, this is equivalent, by compactness, to
$\Hom(\gfG,\gfH)\neq\emptyset$, but in general it differs: for example
if $\gfG$ is an infinite ray and $\gfH$ is a disjoint union of
arbitrarily long finite rays.

\begin{thm}\label{thm:simulfamily}
  Let $\Gamma,\Delta$ be families of graphs, and assume that $\Gamma$
  ``weakly'' simulates $\Delta$ in the following sense: there is a
  finite simulator $\gfS$ such that
  \begin{enumerate}
  \item every graph in $\Delta$ may be simulated: for every $\gfH\in\Delta$ there is $\gfG\in\Gamma$ such that $\gfG\rtimes\gfS$ is weakly \'etale and weakly maps to $\gfH$;
  \item simulated graphs are images of $\Delta$: for every
    $\gfG\in\Gamma$ there is $\gfH\in\Delta$ that weakly maps to
    $\gfG\rtimes\gfS$.
  \end{enumerate}
  If $\Gamma$ has solvable tiling problem, then so does $\Delta$.
\end{thm}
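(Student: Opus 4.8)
The plan is to reduce the tiling problem for the family $\Delta$ to that for the family $\Gamma$, mirroring the single-graph argument of Theorem~\ref{thm:simul} but tracking the ``weakly maps'' relation carefully since we are now dealing with families, and since the hypotheses only guarantee weak maps rather than honest homomorphisms or isomorphisms. The key point to keep in mind throughout is that for a \emph{finite} graph $\gfF$, the relation ``$\gfG$ weakly maps to $\gfF$'' is by compactness equivalent to $\Hom(\gfG,\gfF)\neq\emptyset$; this is what lets us pass back and forth between the combinatorial ``weakly maps'' hypotheses and the actual non-emptiness of hom-sets that the tiling algorithms decide.

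First I would set up the reduction. Let $\gfF$ be an instance of the tiling problem for $\Delta$, i.e.\ a finite graph with the appropriate $\gfB$-labelling. Using the fixed finite simulator $\gfS$ (and the operation $\gfH\mapsto\gfH^\gfS$ introduced before Lemma~\ref{lem:almostadjoint}), I would algorithmically form the finite graph $\gfF^\gfS$, which is an instance of the tiling problem for $\Gamma$. The claim to establish is:
\[
\exists\,\gfH\in\Delta:\ \Hom(\gfH,\gfF)\neq\emptyset
\quad\Longleftrightarrow\quad
\exists\,\gfG\in\Gamma:\ \Hom(\gfG,\gfF^\gfS)\neq\emptyset.
\]
Given this equivalence, running the (assumed) algorithm for $\Gamma$'s tiling problem on input $\gfF^\gfS$ answers $\Delta$'s tiling problem on input $\gfF$, which is exactly what we need.

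For the forward direction of the equivalence, suppose $\gfH\in\Delta$ admits a morphism to $\gfF$. By hypothesis~(1) there is $\gfG\in\Gamma$ with $\gfG\rtimes\gfS$ weakly \'etale and weakly mapping to $\gfH$; composing, every finite subgraph of $\gfG\rtimes\gfS$ maps to $\gfF$, and since $\gfF$ is finite, compactness gives $\Hom(\gfG\rtimes\gfS,\gfF)\neq\emptyset$. As $\gfG\rtimes\gfS$ is weakly \'etale, Lemma~\ref{lem:almostadjoint} (the ``if and only if'' clause) yields $\Hom(\gfG,\gfF^\gfS)\neq\emptyset$, proving the right-hand side. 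For the reverse direction, suppose $\gfG\in\Gamma$ has $\Hom(\gfG,\gfF^\gfS)\neq\emptyset$. By hypothesis~(2) there is $\gfH\in\Delta$ that weakly maps to $\gfG\rtimes\gfS$. Applying Lemma~\ref{lem:almostadjoint} in the forward (one-directional) form, $\Hom(\gfG,\gfF^\gfS)\neq\emptyset$ implies $\Hom(\gfG\rtimes\gfS,\gfF)\neq\emptyset$, so $\gfG\rtimes\gfS$ maps to $\gfF$; composing with the weak map $\gfH\to\gfG\rtimes\gfS$ shows every finite subgraph of $\gfH$ maps to $\gfF$, and compactness (again, $\gfF$ finite) gives $\Hom(\gfH,\gfF)\neq\emptyset$.

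The main obstacle I anticipate is bookkeeping around the two asymmetries in Lemma~\ref{lem:almostadjoint}: the implication from $\Hom(\gfG,\gfH^\gfS)$ to $\Hom(\gfG\rtimes\gfS,\gfH)$ always holds, but the converse needs weak \'etaleness of $\gfG\rtimes\gfS$. This is precisely why hypothesis~(1) demands weak \'etaleness of $\gfG\rtimes\gfS$ (it is used in the forward direction, where we need to deduce something about $\gfF^\gfS$ from a map out of $\gfG\rtimes\gfS$) while hypothesis~(2) does not (there we only push a map into $\gfG\rtimes\gfS$ along the easy implication). Care is also needed in that $\gfF$ must carry the $\gfB$-labelling matching every $\gfH\in\Delta$ and that $\gfF^\gfS$ carries the $\gfA$-labelling matching every $\gfG\in\Gamma$ --- this is automatic since $\gfS$ is a single fixed $(\gfA,\gfB)$-simulator, but it should be stated. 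No genuinely new idea beyond Theorem~\ref{thm:simul} is required; the content is entirely in the compactness passages and in invoking the correct half of Lemma~\ref{lem:almostadjoint} in each direction.
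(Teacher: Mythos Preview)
Your proof is correct and follows essentially the same approach as the paper's: reduce $\Delta$'s tiling problem to $\Gamma$'s via $\gfF\mapsto\gfF^\gfS$, and prove the key equivalence using Lemma~\ref{lem:almostadjoint} (the appropriate half in each direction) together with compactness against the finite targets $\gfF$ and $\gfF^\gfS$. The only cosmetic difference is that in the direction using hypothesis~(1) the paper applies Lemma~\ref{lem:almostadjoint} to finite subgraphs $\gfG'\subseteq\gfG$ first and then invokes compactness on $\gfF^\gfS$, whereas you invoke compactness on $\gfF$ first and then apply the lemma once to $\gfG$ itself; both orderings are valid.
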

\begin{proof}
  Let $\gfF$ be an instance of tiling problem for $\Delta$. As in the
  proof of Theorem~\ref{thm:simul}, we may solve the tiling problem
  for $\Gamma$ on instance $\gfF^\gfS$, so it suffices to prove
  \[ \exists \gfG \in \Gamma: \Hom(\gfG,\gfF^\gfS) = \emptyset \iff \exists \gfH \in \Delta: \Hom(\gfH,\gfF) = \emptyset \]
  
  If $\Hom(\gfG,\gfF^\gfS)$
  is non-empty for some $\gfG\in\Gamma$, then
  $\Hom(\gfG\rtimes\gfS,\gfF)\neq\emptyset$ by
  Lemma~\ref{lem:almostadjoint}, so by the second assumption there is
  a graph $\gfH\in\Delta$ such that
  $\Hom(\gfH',\gfF)\supseteq\Hom(\gfG\rtimes\gfS,\gfF)\circ\Hom(\gfH',\gfG\rtimes\gfS)\neq\emptyset$
  for all finite $\gfH'\subseteq\gfH$. Since $\gfF$ is finite,
  $\Hom(\gfH,\gfF)=\lim\Hom(\gfH',\gfF)\neq\emptyset$ by compactness.

  Conversely, if $\Hom(\gfH,\gfF)\neq\emptyset$ for some $\gfH\in\Delta$,
  then by the first assumption there is a graph $\gfG\in\Gamma$ such
  that $\gfG \rtimes \gfS$ is \'etale and $\Hom(\gfG'\rtimes\gfS,\gfH)\neq\emptyset$ for all finite
  $\gfG'\subseteq\gfG$, so $\Hom(\gfG'\rtimes\gfS,\gfF)\neq\emptyset$,
  so $\Hom(\gfG',\gfF^\gfS)\neq\emptyset$ by
  Lemma~\ref{lem:almostadjoint} (because subgraphs of \'etale graphs are \'etale). Now $\gfF^\gfS$ is finite so
  $\Hom(\gfG,\gfF^\gfS)=\lim\Hom(\gfG',\gfF^\gfS)\neq\emptyset$ by
  compactness.
\end{proof}

\begin{remark}
\label{rem:TPEquivalent}
If $\Gamma$ and $\Delta$ are singletons with the same labelling graph $\gfA = \gfB$, and $\gfS = \gfA$ is the trivial simulator (both labellings are the identity map), the theorem reduces to the fact that if two graphs weakly map to each other, then their tiling problems are equivalent.
\end{remark}

The literature often mentions the ``seeded tiling problem''; we shall
return to it in~\S\ref{ss:cayley}. It suffices, for now, to define it
as a tiling problem for a graph with the ``sunny side up'' labelling, see
Example~\ref{ex:ssu}.

As a side-note, it is an interesting, and not yet fully understood,
problem to determine which groups admit a ``sunny side up'' labelling
defined as a factor (shift-commuting continuous image) of a shift of
finite type (equivalently, factor of a DHS), see \cite{dahmani:symbolic}.
Such images are called \emph{sofic}. For us, the ``sunny side
up'' labelling is fixed once and for all on the graph, and exists
independently of the tiling problem. The soficity of the
sunny-side-up on the lamplighter group is a side-effect of our
constructions, see Proposition~\ref{prop:SSUSofic}.

Our undecidability result rests on the following result of Wang,
proven itself by a reduction to the halting problem of Turing
machines:
\begin{thm}[Kahr, Moore \&\ Wang~\cites{kahr-moore-wang:aea, wang:ptpr2}]
  The seeded tiling problem on $\Z^2$ (namely, on the Cayley graph of
  $\Z^2$ marked by the ``sunny side up'') is unsolvable.
\end{thm}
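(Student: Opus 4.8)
The final statement is the classical Kahr--Moore--Wang theorem, so the plan is to recall the standard reduction from the halting problem, phrased in the graph language of this paper. Fix a deterministic single-tape Turing machine $M$ with bi-infinite tape, initial state $q_0$, a single halting state $q_h$, blank symbol $\mathsf{blank}$, and transition function $\delta$. From $M$ I would build, by an explicit algorithm, a finite graph $\gfF_M$ carrying the sunny-side-up labelling --- so one distinguished vertex labelled \textsf{vitellus}, the rest \textsf{albumen} --- such that
\[ \Hom(\Z^2,\gfF_M)\neq\emptyset \iff M\text{ does not halt on blank input.} \]
Since the set of such $M$ is not decidable (its decidability would decide the halting problem, which is undecidable already for blank input), this makes the seeded tiling problem on $\Z^2$ unsolvable. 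Here a finite $\gfA$-labelled graph $\gfF$ with $\gfA$ the sunny-side-up labelling is read as a Wang tileset: vertices are \emph{tiles} carrying four edge colours (north, south, east, west), adjacency in direction $s$ means the $s$-colour of one tile equals the $(-s)$-colour of the next, and an element of $\Hom(\Z^2,\gfF_M)$ in $\Graph_{/\gfA}$ is exactly a valid tiling of the plane in which the origin is covered by the unique \textsf{vitellus} tile.

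First I would fix the colour alphabets. North/south colours are a tape symbol $\sigma$, or a pair $(\sigma,q)$ (``head present, state $q$, reading $\sigma$''), or a marker $\mathsf{ground}$; east/west colours are $*$ (``no signal'') or $(q,\leftarrow),(q,\rightarrow)$ (``a head in state $q$ is being handed one cell left, resp.\ right''). The tiles are then: a \emph{copy tile} for each tape symbol $\sigma$ (north $=$ south $=\sigma$, $*$ on the horizontal edges); a single \emph{ground tile} (all four colours $\mathsf{ground}$); a \emph{bootstrap tile} (south $=\mathsf{ground}$, north $=\mathsf{blank}$, horizontal edges $*$) together with the \emph{origin tile}, which is the unique \textsf{vitellus} vertex (south $=\mathsf{ground}$, north $=(\mathsf{blank},q_0)$, horizontal edges $*$); for each $q\neq q_h$ and $\sigma$ a \emph{head tile} that, writing $\delta(q,\sigma)=(q',\sigma',d)$, has south $=(\sigma,q)$, north $=\sigma'$, the colour $(q',d)$ on its $d$-edge and $*$ on the opposite edge; and for each $q$, $\sigma$ and direction $d$ a \emph{receiver tile} with south $=\sigma$, north $=(\sigma,q)$, the colour $(q,d)$ on its $(-d)$-edge and $*$ on its $d$-edge. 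Deliberately, there is no head tile for $q_h$. The idea is that a horizontal row of the tiling records a Turing configuration on its south edges, a head tile rewrites its cell and hands the state one step sideways along a single horizontal edge to a receiver, which re-emits the head on the next row; the $\mathsf{ground}$ marker under the origin anchors everything.

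Next I would check the correspondence. If $M$ runs forever, its space--time diagram tiles the closed upper half-plane (row $0$ being the initial configuration: origin tile at column $0$, bootstrap tiles elsewhere; row $n+1$ obtained from row $n$ by the transition rule), while the open lower half-plane is tiled by ground tiles; this is a valid tiling with the origin tile at $0$, so $\Hom(\Z^2,\gfF_M)\neq\emptyset$. Conversely, in any valid tiling the origin sits at $0$; $\mathsf{ground}$ on its south edge forces a ground tile below, and since $\mathsf{ground}$ is the only north-colour of a ground tile and is also its east/west colour, this propagates so that the entire region below row $0$ is ground and every tile of row $0$ has south colour $\mathsf{ground}$; the only such tiles are ground, bootstrap and origin tiles, and they cannot sit horizontally next to a ground tile (incompatible east/west colours), so row $0$ is forced to be exactly the initial configuration. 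A routine induction --- using that exactly one head signal is emitted and received per row, and that a north colour of the form $(\sigma,q)$ occurs only on receiver and origin tiles, which require an incoming head signal, while copy, ground and bootstrap tiles carry $*$ on both horizontal edges so no stray signal or head can appear --- then shows row $n$ equals configuration $n$ of $M$. If $M$ halted after $N$ steps, row $N$ would demand a head tile for state $q_h$, of which there is none; hence a valid tiling exists only if $M$ never halts, proving the displayed equivalence.

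I do not expect any genuine obstacle --- the result is classical --- and the one real step is the combinatorial bookkeeping above: arranging that the seed anchors a full lower half-plane of $\mathsf{ground}$ and a canonical initial row, that heads are neither created nor destroyed except by the intended rules, that $\delta$ is faithfully simulated by the one-cell hand-off of the head signal, and finally that the whole Wang tileset is packaged as a finite $\gfA$-labelled graph $\gfF_M$ (with a reversal compatible with that of the Cayley graph of $\Z^2$), so that $\Hom(\Z^2,\gfF_M)$ in $\Graph_{/\gfA}$ is literally the set of valid seeded tilings.
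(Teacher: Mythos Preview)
The paper does not actually prove this theorem: it is quoted as a classical result of Kahr, Moore and Wang, with only the one-line remark that it is ``proven itself by a reduction to the halting problem of Turing machines''. Your proposal carries out precisely that classical reduction, and the argument you outline is correct: the ground colour forces the lower half-plane, the seed forces row $0$ to be the initial configuration, the head/receiver mechanism deterministically propagates the computation upward with exactly one head per row, and the absence of a head tile for $q_h$ makes halting incompatible with a global tiling. So your approach is the standard one the paper alludes to but omits; there is nothing to compare beyond noting that you have supplied the details the paper leaves to the citation.
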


Since the quadrant and half-plane simulate the plane, by
Example~\ref{ex:simquadrant}, it follows that the tiling problem on
the quadrant (with the markings from Example~\ref{ex:quadrant})
is also unsolvable.


One can interpret Robinson's classical proof of undecidability of the
tiling problem \cite{robinson:undecidability} as simulating
seeded-$\Z^2$ on $\Z^2$. We give an informal explanation that
concentrates on the link to weak mappings, assuming the reader is
familiar with the proof. More specifically, we have in mind Kari's
presentation~\cite{kari:tp}*{Section~8.2.4}. This continues
Example~\ref{ex:CompressingRectangles}.

\begin{exple}\label{ex:Robinson}
  In Robinson's proof of the undecidability of the tiling problem, one
  builds a subshift of finite type whose configurations contain
  drawings of squares (containing squares containing squares\dots)
  around a square grid (possibly with some degenerate squares), so
  that
  \begin{itemize}
  \item each configuration contains arbitrarily large finite squares, and
  \item the larger a square is, the more ``free rows'' (resp. free
    columns) it contains; a free row is one that does not hit (a
    smaller square inside).
  \item in some configuration every cell of the grid is contained in a
    finite square.
\end{itemize}

One can use additional signals to mark the free rows and free columns
inside the rectangles, and the cells that are part of a free row and a
free column necessarily form a set-theoretic rectangle. Thus, every
square can be seen as being of the type in
Example~\ref{ex:CompressingRectangles}.

The bottom row and leftmost column can be made visible in each cell,
and thus we can modify the construction in
Example~\ref{ex:CompressingRectangles} slightly to obtain a simulator
$\gfS$ such that the simulated rectangles
$\{0,1,...,|A|-1\} \times \{0,1,...,|B|-1\}$ simulate a rectangle on
the left corner of $\N^2$ with its natural vertex labelling from
Example~\ref{ex:quadrant}.

We claim that the family $\Gamma$ of all tilings of $\Z^2$ by this
tile set then weakly simulates the singleton family
$\Delta = \{\N^2\}$ (with its natural labelling). To prove
\begin{quote}
  every graph in $\Delta$ may be simulated: for every $\gfH\in\Delta$
  there is $\gfG\in\Gamma$ such that $\gfG\rtimes\gfS$ is weakly
  \'etale and weakly maps to $\gfH$;
\end{quote}
take $\gfG \in \Gamma$ from the third item. Then every finite subgraph
of $\gfG \rtimes \gfS$ is contained in one that is a disjoint union of
simulated full finite squares. These are subgraphs of $\N^2$, so to
get a graph homomorphism, on each such square separately we can take
its graph embedding into $\N^2$.  It is clear that all simulated
graphs are weakly \'etale because of the form of the simulator, so in
particular $\gfG \rtimes \gfS$ is. To prove
\begin{quote}
  simulated graphs are images of $\Delta$: for every $\gfG\in\Gamma$
  there is $\gfH\in\Delta$ that weakly maps to $\gfG\rtimes\gfS$,
\end{quote}
for any finite subgraph of $\gfH' \subset \gfH$, taking a large enough
square in $\gfG$, we see that $\gfG \rtimes \gfS$ contains an actual
copy of $\gfH'$, and we can use the graph embedding as the graph
homomorphism.

Since $\N^2$ with the labelling from Example~\ref{ex:quadrant}
simulates seeded-$\Z^2$, $\Gamma$ also weakly simulates
$\{\mathrm{seeded-}\Z^2\}$, since weak simulation is easily seen to be
transitive.
\end{exple}

\subsection{General SFTs on Cayley and Schreier graphs}\label{ss:cayley}
We develop more the link between graphs and groups, sketched in
Example~\ref{ex:cayley}. Consider a group $G=\langle S\rangle$, and a
set $X$ on which $G$ acts on the right. We associate with $X$
the \emph{Schreier graph} $\gfX$ with vertex set $X$ and edge set
$X\times S$, with as usual $(x,s)^-=x$ and $(x,s)^+=x s$.  If
$S=S^{-1}$, then $\gfX$ is unoriented with $(x,s)'=(x s,s^{-1})$.

\begin{defn}\label{def:SFT}
  A \emph{subshift of finite type} or \emph{SFT} $\Omega$ on $X$ is given by
  a finite set $A$ called the \emph{alphabet}, an integer $n$ called
  the \emph{radius}, and a subset $\Pi$ of $A^{S^{\le n}}$ called the
  \emph{allowed patterns}. It is defined as
  \[\Omega=\{\alpha\in A^X: \forall x\in X:\exists P_x\in\Pi: P_x(w)=\alpha(x w)\text{ for all }w\in S^{\le n}\},\]
  namely the set of labellings of $X$ by elements of $A$ such that, in
  every neighbourhood of size $n$, the labels form an allowed pattern.
\end{defn}

The definition above is a generalization of the more 
classical notion, in which $X=G$ with action by translation. The
\emph{tiling problem} for $X$ asks for an algorithm that, given
$\Pi\subseteq A^{S^{\le n}}$, determines whether the corresponding
SFT $\Omega$ is non-empty.

An important variant is the \emph{seeded tiling problem}, which asks
for an algorithm that, given $\Pi\subseteq A^{S^{\le n}}$ and and
$x_0\in X$ and $a_0\in A$, determines whether the corresponding
$\Omega$ contains a configuration $\alpha\in A^X$ with
$\alpha(x_0)=a_0$.

As we shall now see, the tiling problem for $X$ is essentially
equivalent to the tiling problem on graphs from
Definition~\ref{defn:tp}, and the seeded tiling problem is essentially
equivalent to the tiling problem on a graph with a marked vertex (as
in Example~\ref{ex:ssu}).

We need a few technicalities. First, SFTs of course lose all
edge information, so we need the following definition.

\begin{defn}
  A DHS $\Hom(\gfG, \gfF)$ is \emph{weakly
    resolving} if $\gfF$ is weakly \'etale.
\end{defn}

Note that one can make any DHS weakly resolving at the cost of adding
a few more vertices to $\gfF$, without changing the system up to isomorphism
(in the sense of the following definition). Even when no information is lost,
the constructions between SFTs and DHSs are only inverses of each other
up to isomorphism.

\noindent We now give a suitable notion of isomorphism:
\begin{defn}\label{def:Isomorphism}
  Let $\Omega_1, \Omega_2$ be SFTs on a $G$-set $X$. A \emph{block
    map} from $\Omega_1$ to $\Omega_2$ is a map of the form
  \[ f(\eta)_x = f_{\mathrm{loc}}(\eta_{xg_1},\dots, \eta_{xg_k}) \]
  for some $f_{\mathrm{loc}}\colon A^{k} \to B$ and some fixed
  $g_1,\dots,g_k \in G$.  We say two SFTs are \emph{(block map)
    isomorphic} if there are block maps
  $f_i\colon \Omega_i \to \Omega_{3-i}$ which are inverses of each
  other. Similarly one can define block maps and isomorphisms between
  DHSs $\Hom(\gfX, \gfF_1)$ and $\Hom(\gfX, \gfF_2)$, as well as
  between SFTs and DHSs.
\end{defn}

In the classical situation $G = X$ with $G$ acting by
$g\eta_{h} = \eta_{g^{-1}h}$, morphisms between SFTs are just the
usual morphisms of topological $G$-systems, namely shift-commuting
continuous functions, and isomorphisms are just the \emph{topological
  conjugacies}, or shift-commuting homeomorphisms.

In the general situation, there are some subtleties.  If $X$ is a
$G$-set, then $G$ also acts on $A^X$ by $g\eta_x = \eta_{xg}$, but the
above block maps are not the continuous functions commuting with this
action.  Indeed, if $G$ acts $\infty$-transitively on $X$ then only
finitely many continuous functions commute with its natural action on
$A^X$ (but there are plenty of morphisms in the above sense); on the
other hand if $G = \Z$ and no element in $X$ has infinite orbit but
infinitely many elements have nontrivial orbit, then there are
uncountably many shift-commuting continuous functions on $A^X$.  We
also note that bijectivity of a block map $f\colon A^X \to A^X$ for a
transitive $G$-set $X$ is equivalent to having a block map inverse,
but this is no longer true if the action is not transitive.

\begin{prop}\label{prop:sft=vsft}
  Let $G$ be a monoid acting on a set $X$. SFTs on $X$ are equivalent
  by block map isomorphisms to weakly resolving DHSs.
\end{prop}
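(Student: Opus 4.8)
The plan is to exhibit block maps in both directions between SFTs on $X$ and weakly resolving DHSs on the Schreier graph $\gfX$, and check that composites are the identity up to block map isomorphism.

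\textbf{From SFTs to DHSs.} Given an SFT $\Omega$ on $X$ with alphabet $A$, radius $n$, and allowed patterns $\Pi\subseteq A^{S^{\le n}}$, I would build a finite graph $\gfF$ whose vertices are the legal patterns $P\in\Pi$ (or rather, to be safe, the patterns on balls $S^{\le n}$ that extend to some configuration, but working with $\Pi$ and discarding useless vertices is fine since emptiness is preserved). An edge of $\gfF$ from $P$ to $Q$ labelled $s\in S$ records that $P$ and $Q$ are the patterns seen at $x$ and $xs$ in some configuration, i.e.\ that $P$ and $Q$ agree on the overlap $S^{\le n}\cap sS^{\le n}$ shifted appropriately. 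One then checks $\gfF$ is weakly \'etale: if two edges out of $P$ carry the same label $s$ and land on $Q,Q'$, then $Q,Q'$ are both forced to equal the shift of $P$ read at the relevant positions, hence $Q=Q'$ — so in fact one takes the simplification of this graph, which is automatically weakly \'etale, and this is the $\gfF$ we want. A configuration $\alpha\in\Omega$ then gives the morphism $\gfX\to\gfF$ sending $x$ to its local pattern $P_x$ and the edge $(x,s)$ to the corresponding edge; conversely a morphism $\gfX\to\gfF$ reads off a labelling of $X$ by $A$ (take the central symbol of each pattern) which is forced to lie in $\Omega$. The two maps in each direction are block maps with memory $S^{\le n}$.

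\textbf{From DHSs to SFTs.} Given a weakly resolving DHS $\Hom(\gfX,\gfF)$ with $\gfF$ finite and weakly \'etale, I would take the alphabet to be $V(\gfF)$ and radius $1$, with allowed patterns those $(v,(v_s)_{s\in S})$ such that there is for each $s$ an edge of $\gfF$ from $v$ to $v_s$ labelled $s$ — when $\gfX$ is the Schreier graph every vertex has exactly one outgoing $s$-edge for each $s$, and weak \'etaleness of $\gfF$ guarantees that the edge witnessing $(v,v_s)$ is unique, so a vertex-labelling $X\to V(\gfF)$ satisfying these local constraints extends \emph{uniquely} to a graph morphism $\gfX\to\gfF$. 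That is the crux: weak resolvingness is exactly what makes the forgetful map (morphism $\mapsto$ its action on vertices) a bijection onto the SFT, and its inverse is a block map. One then remarks, as the paragraph before the statement already notes, that an arbitrary DHS can be made weakly resolving by splitting vertices of $\gfF$ according to their incident edges, without changing the DHS up to isomorphism; this justifies restricting attention to the weakly resolving case, and the isomorphism thus produced is a block map.

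\textbf{Composites.} Finally I would verify that starting from $\Omega$, passing to $\Hom(\gfX,\gfF)$ and back yields an SFT block-map isomorphic to $\Omega$: the round trip replaces each symbol by the pattern it generates and then extracts the central symbol, which is the identity up to the block recoding $\alpha\mapsto(x\mapsto P_x)$ and its inverse; conversely starting from a weakly resolving $\Hom(\gfX,\gfF)$ the round trip reproduces $\gfF$ up to the vertex-splitting that does not change the DHS up to block map isomorphism. The main obstacle is not any single estimate but getting the bookkeeping of overlapping balls right in the SFT-to-DHS direction, and — more conceptually — pinning down exactly where \emph{weak resolvingness} is used: it is precisely the hypothesis that lets one recover an entire graph morphism (the edge data) from the vertex data alone, which is what makes a DHS genuinely equivalent to a pattern-based SFT rather than merely a quotient of one. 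Everything else is routine once one has fixed, as the definitions do, that $\gfX$ is a Schreier graph so that every vertex has a canonical outgoing edge for each generator.
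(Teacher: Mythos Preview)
Your approach is essentially identical to the paper's: in both directions you use the same constructions (patterns as vertices with overlap-compatible edges for SFT $\to$ DHS; vertex set as alphabet with radius $1$ for DHS $\to$ SFT), and you correctly isolate the role of weak \'etaleness in the DHS $\to$ SFT direction as the hypothesis that lets a vertex labelling be upgraded uniquely to a full graph morphism.

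One point deserves tightening. In the SFT $\to$ DHS direction you argue that two $s$-edges out of $P$ must land on the same $Q$ because ``$Q,Q'$ are both forced to equal the shift of $P$''; this is not quite right. The overlap condition only pins down $Q$ on $S^{\le n-1}$, not on words of length exactly $n$, so distinct targets $Q\neq Q'$ are possible and the constructed $\gfF$ need not be weakly \'etale. Simplification does not fix this: simplification merges parallel edges with the same label \emph{and the same endpoints}, so it turns a weakly \'etale graph into an \'etale one, but it does not produce weak \'etaleness from nothing. The bijection $\Omega\leftrightarrow\Hom(\gfX,\gfF)$ you describe still works---it does not actually require $\gfF$ to be weakly \'etale, since the edge $\eta((x,s))$ is determined by its two endpoints $\eta(x),\eta(xs)$ together with its label, and there is at most one such edge because $E(\gfF)\subseteq\Pi\times S\times\Pi$---so the equivalence goes through; but your justification for why $\gfF$ lands in the weakly resolving class is not correct as written.
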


In essence, every SFT can be converted into a DHS, and
vice versa; the constructions are defined by local rules, and
involve no funny business.

The precise statement we are referring to is the following:
In the proof we construct a mapping $F$ that turns an
SFT $\Omega$ into a weakly resolving DHS
$\Hom(\gfX, \gfF)$, and give another construction $F'$ for the other
direction. These extend to functors between the appropriate categories,
when one takes the morphisms to be the block maps, and the functors
$F$ and $F'$ give an equivalence of categories.

This equivalence is also ``by block map isomorphisms'', in that the
object mappings of $F$ and $F'$ are themselves given by invertible block maps;
in the classical dynamical situation of $G$-subshifts, the object mappings
are topological conjugacies.

\begin{proof}
  We only give the object mappings and show that they are isomorphisms.
  the choices of mappings between morphisms are obvious,
  and verification that the resulting funtors are a categorical equivalence
  is routine.

  In the direction ``$\gfX$ to $X$'': let $\gfF$ be a finite graph
  with no vertex labels and edge labels $S$, and consider the DHS
  $\Hom(\gfX, \gfF)$ that it defines. We construct a SFT
  $\Omega$ on $X$ as follows: we set $A = V(\gfF)$ and $n = 1$, and
  define $\Pi \subset A^{S^{\leq 1}}$ by taking $P \in \Pi$ if for all
  $s \in S$, the graph $\gfF$ has an edge with label $s$ from $P(1)$
  to $P(s)$. The next two paragraphs describe maps
  $\Omega\leftrightarrow\Hom(\gfX, \gfF)$.

  Firstly, consider $\eta \in \Hom(\gfX, \gfF)$ and construct
  $\alpha \in A^X$ by $\alpha(x)\coloneqq\eta(x)$ for all $x \in X$;
  namely, $\alpha = \eta\restriction V(\gfX)$.  We claim
  $\alpha \in \Omega$. Indeed consider $x \in X$, and define
  $P_x\in A^{S^{\le1}}$ by $P_x(w)\coloneqq\alpha(x w)$ for all
  $w \in S^{\leq 1}$. Then in $\gfX$ the edge $(x, s)$ has label $s$
  from $x$ to $x s$, and we have $\eta(x) = \alpha(x) = P_x(1)$ and
  $\eta(x s) = \alpha(x s) = P_x(s)$, to $\eta$ map the edge $(x, s)$ to
  some edge in $\gfF$ with label $s$ from $P_x(1)$ to $P_x(s)$; in
  particular such an edge exists, and we have $P_x \in \Pi$.

  Secondly, consider $\alpha \in \Omega$, and construct a corresponding
  homomorphism $\eta \in \Hom(\gfX, \gfF)$. We define
  $\eta(x) \coloneqq \alpha(x)$ on vertices. Consider an edge $(x, s)$
  with label $s$ from $x$ to $x s$, and define $P_x\in A^{S^{\le1}}$
  by $P_x(w)\coloneqq\alpha(x)$ for all $w\in S^{\le1}$. Because
  $P_x \in \Pi$, there must be an edge from $P_x(1)$ to $P_x(s)$ with
  label $s$ in $\gfF$, and since $\gfF$ is weakly resolving there is a
  unique such edge, which we call $\eta((x, s))$. In this manner we
  defined a graph morphism $\eta\colon\gfX\to\gfF$.

  The constructions are clearly inverses of each other and are given by
  block maps.

  In the direction ``$X$ to $\gfX$'': let $\Omega$ be an SFT
  on $X$ for some alphabet $A$, some $n \geq 0$ and some
  $\Pi \subset A^{S^{\leq n}}$. We define a weakly resolving DHS
  via a graph $\gfF$, which is constructed as follows:
  $V(\gfF) = \Pi$, and for each $P, P' \in \Pi$ we include in $\gfF$
  an edge $e = (P, s, P')$ labelled $s \in S$ with $e^+ = P$ and
  $f^- = P'$ whenever $P'(w) = P(s w)$ for all $w \in S^{\leq
    n-1}$. This graph is obviously weakly resolving, since the
  labelling map and the head and tail maps are projections. The next
  two paragraphs describe maps
  $\Hom(\gfX, \gfF)\leftrightarrow\Omega$.

  Firstly, consider $\alpha \in \Omega$. We construct a homomorphism
  $\eta \in \Hom(\gfX, \gfF)$ as follows. On vertices $x \in X$, set
  $\eta(x) \coloneqq P$ with $P(w) \coloneqq \alpha(x w)$ for all
  $w \in A^{S^{\leq n}}$. For edges $(x, s)$ of $\gfX$, on whose
  extremities we have already defined $\eta(x) = P$, $\eta(x s) = P'$,
  note that by definition we have
  \[ P'(w) = \alpha(x s \cdot w) = \alpha(x \cdot s w) = P(s w)\] for
  all $w \in S^{\leq n-1}$, so $\gfF$ has an edge labelled $s$ from
  $P$ to $P'$. We elt $\eta((x, s))$ be this edge. By definition of
  $\gfF$, we have indeed defined a graph morphism
  $\eta \in \Hom(\gfX, \gfF)$.

  Secondly, consider $\eta \in \Hom(\gfX, \gfF)$, and construct a
  configuration $\alpha \in A^X$ as follows. Set
  $\alpha(x)\coloneqq\eta(x)(1)$, namely look at the pattern $\eta(x)$
  and extract its symbol at the identity. We claim
  $\alpha \in \Omega$. To see this, define $P_x\in A^{S^{\le n}}$ by
  $P_x(w) \coloneqq \alpha(x w)$ for all $w \in S^{\leq n}$. To prove
  that all $P_x$ belong to $\Pi$, it suffices to show $P_x = \eta(x)$,
  since then $P_x \in V(\gfF) = \Pi$. We show this simultaneously for
  all $x \in X$, considering all $w\in S^{\le n}$ in order of
  increasing length. If $|w|=0$, this is true by definition, since
  $P_x(1) = \alpha(x) = \eta(x)(1)$. Supposing the claim is true for
  $w$, consider a word $s w$ with $s \in S$. Unwrapping the definitions we
  have
  \[ P_x(s w) = \alpha(x \cdot s w) = \alpha(x s\cdot w) = P_{x s}(w) = \eta(x s)(w),
  \]
  so we are led to show $\eta(x s)(w) = \eta(x)(s w)$. The edge
  $(x, s)\in E(\gfX)$ has label $s$ and extremities
  $(x, s)^+ = x s, (x, s)^- = x$, so it we write
  $\eta(x, s) = (P, s, P') \in E(\gfF)$ then $P'(w) = P(s w)$, that is,
  $\eta(x s)(w) = P'(w) = P(s w) = \eta(x)(s w)$ as required.
  
  The constructions are clearly inverses of each other and are given by block maps.
\end{proof}

In the seeded case, we fix an ``origin'' $o$ in the $G$-set $X$, and
consider the Schreier graph of $X$ with generating set $S$, with the
sunny-side-up labelling where $o$ is mapped to the vitellus, and all
others to albumen (edge labellings are uniquely determined). The DHS
on this graph are \emph{seeded-DHSs}. (This is a slight generalization
of Example~\ref{ex:ssu}.) We define similarly a seeded variant of
SFTs.

\begin{defn}\label{defn:SeededVertexSFT}
  A \emph{seeded SFT} $\Omega$ on $(X, o)$ is given by a finite set
  $A$ called the \emph{alphabet}, an integer $n$ called the
  \emph{radius}, and a subset $\Pi$ of
  $(A \times \{0,1\})^{S^{\le n}}$ called the \emph{allowed
    patterns}. Writing $\pi_1, \pi_2$ respectively for the pointwise
  projections to the first and second coordinate of the alphabet, the
  SFT $\Omega$ is defined as
  \begin{align*}
  \Omega=\{\pi_1(\alpha) \;:\; & \alpha \in (A \times \{0,1\})^X \wedge (\pi_2(\alpha)_x = 1 \iff x = o) \wedge \\
   & \forall x \in X: \exists P_x \in \Pi: P_x(w)=\alpha(x w)\text{ for all }w\in S^{\le n} \}.\qedhere
  \end{align*}
\end{defn}

In other words, the allowed patterns see the marking at the origin, but this is
erased in the actual configurations.

We can define isomorphisms on seeded SFTs and on seeded DHSs by block maps,
similarly as in the unseeded case. The only difference is that the block map
is allowed to behave differently when near the seed, which can be implemented
as in Definition~\ref{defn:SeededVertexSFT} (allowing them to see the seed
position). The proof of the following proposition is similar to that of
Proposition~\ref{prop:sft=vsft}, and is omitted.

\begin{prop}\label{prop:ssft=svsft}
  Let $G$ be a monoid acting on a set $X$. Seeded SFTs on $X$ are equivalent
  by block map isomorphisms to weakly resolving seeded DHSs.
\end{prop}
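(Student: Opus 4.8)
The plan is to follow the proof of Proposition~\ref{prop:sft=vsft} line by line, inserting at each stage the bookkeeping of the seed. Throughout, the carrier is the Schreier graph $\gfX$ of $(X,o)$ carrying the sunny-side-up labelling $\gfX\to\gfA$ of Example~\ref{ex:ssu}, where $\gfA$ has vertex set $\{\mathsf{vitellus},\mathsf{albumen}\}$ together with the induced loops, and every DHS is taken in the slice category $\Graph_{/\gfA}$. I would produce object maps in both directions and verify that they are mutually inverse \emph{seed-aware} block maps, in the sense allowed for seeded systems (cf.\ the remark preceding this proposition); the assignment on morphisms and the check that the resulting functors form an equivalence are routine, exactly as in the unseeded case, and I would omit them.

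In the direction ``seeded DHS to seeded SFT'': given a weakly \'etale $\gfA$-labelled finite graph $\gfF$, put $A=V(\gfF)$ and $n=1$, and let $\Pi\subseteq(A\times\{0,1\})^{S^{\le1}}$ consist of those patterns $P$, written $P(w)=(v_w,b_w)$, for which (i) the $\gfA$-label of $v_w$ in $\gfF$ is $\mathsf{vitellus}$ exactly when $b_w=1$, and (ii) for each $s\in S$ the graph $\gfF$ has an edge from $v_1$ to $v_s$ whose $\gfA$-label is the sunny-side-up label determined by $b_1$ and $b_s$. From $\eta\in\Graph_{/\gfA}(\gfX,\gfF)$ one gets the configuration $x\mapsto\eta(x)$, whose associated full configuration $\tilde\alpha(x)=(\eta(x),[x=o])$ satisfies the pattern condition because $\eta$ is a morphism over $\gfA$; conversely, given $\alpha\in\Omega$ with witness $\tilde\alpha$, conditions (i)--(ii) force $\alpha(x)$ to have label $\mathsf{vitellus}$ precisely when $x=o$ and to be joined to $\alpha(xs)$ by an edge with the correct $\gfA$-label, so defining $\eta$ to agree with $\alpha$ on vertices and, using that $\gfF$ is weakly \'etale, to be the unique admissible edge above each edge of $\gfX$ yields an element of $\Graph_{/\gfA}(\gfX,\gfF)$. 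These assignments are manifestly inverse and local.

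In the direction ``seeded SFT to seeded DHS'': given $\Omega$ with alphabet $A$, radius $n$ (which we may assume $\ge1$ by padding), and allowed patterns $\Pi$, set $V(\gfF)=\Pi$, label $P\in V(\gfF)$ by $\mathsf{vitellus}$ iff $\pi_2(P)(1)=1$, and for $P,P'\in\Pi$ and $s\in S$ include an edge from $P$ to $P'$ --- bearing the $\gfA$-label read off from the labels of $P$, $P'$ and from $s$ --- whenever $P'(w)=P(sw)$ for all $w\in S^{\le n-1}$. Since $n\ge1$ this last requirement already entails $\pi_2(P'(1))=\pi_2(P(s))$, so the seed markings of adjacent patterns are automatically coherent and no extra constraint on them is needed. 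As in the unseeded proof, send $\alpha\in\Omega$ with witness $\tilde\alpha$ to the morphism $\eta$ defined by $\eta(x)(w)=\tilde\alpha(xw)$, and send $\eta\in\Graph_{/\gfA}(\gfX,\gfF)$ to $\alpha$ with $\alpha(x)=\pi_1(\eta(x)(1))$; the verification that these are inverse is the same induction on $|w|$ as for Proposition~\ref{prop:sft=vsft}, the only new point being the base case $\pi_2(\eta(x)(1))=[x=o]$, which holds because $\eta$ respects the sunny-side-up labelling. Weak resolvingness of $\gfF$ is obtained as before, if necessary by enlarging $\gfF$ as in the remark following the definition of weakly resolving DHS.

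As for the main obstacle: there is really no new difficulty beyond what is already handled in Proposition~\ref{prop:sft=vsft}. The one new ingredient is keeping the origin marking consistent, and the point to get right is that this costs nothing --- radius-$n$ patterns with $n\ge1$ already record the marking of their neighbours, and the sunny-side-up labelling of the carrier pins down the marking of $o$, so no new transition rules arise. The only genuinely delicate assertion is that the object maps are isomorphisms \emph{by block maps} in the seeded sense: here one must invoke the convention that seeded block maps may see the seed position, since the map $\alpha\mapsto\eta$ above does need to locate $o$ in order to reconstruct the erased $\{0,1\}$-coordinate.
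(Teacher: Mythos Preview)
Your proposal is correct and matches the paper's intent exactly: the paper omits the proof, stating only that it is ``similar to that of Proposition~\ref{prop:sft=vsft}'', and what you have written is precisely that adaptation, tracking the sunny-side-up marking through both directions of the construction. Your identification of the one genuinely new point---that seeded block maps must be allowed to see the origin in order to reconstruct the erased $\{0,1\}$-coordinate---is the right one, and the paper makes exactly this provision in the paragraph preceding the proposition.
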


\section{The lamplighter group}\label{ss:ll}

Our main result applies to a specific example of group, the
``lamplighter group''. Write $\Z/2$ for the two-element group. The group
$L$ may be defined in various manners: it is the wreath product
$L=\Z/2\wr\Z$, namely the extension of
$\{f\colon\Z\to\Z/2\text{ a finitely supported set map}\}$ by $\Z$
acting by shifts.

Writing $a$ for the generator of $\Z$ and $d$ for
the delta-function $f\colon\Z\to\Z/2$ taking value $1$ at $0$ and $0$
elsewhere, we have the presentation
\[L=\langle a,d\mid d^2,[d,d^{a^n}]\text{ for all }n>0\rangle.\]
We shall prefer the more symmetric presentation, setting $b=da$,
\[L=\langle a,b\mid (a^n b^{-n})^2\text{ for all }n>0\rangle. \]

The reason $L$ is called the ``lamplighter group'' is the
following. Picture a two-way-infinite street, with a house at every
integer, and a lamp between any two neighbouring houses. The
``lamplighter'' starts at house $0$, and has a schedule to follow:
turn on some specified lamps, and stop at a given house. This schedule
is an element of $L$. It may be expressed as a word in elementary
operations: ``move to the next house'' ($a$ or $a^{-1}$, depending on
the direction), and ``move to the next house, flipping the state of
the lamp along the way'' ($b$ or $b^{-1}$).

This description, where the lamps are between houses, avoids the issue of
whether the lamplighter flips the lamp before or after moving. We find this convenient,
and thus from now on consider the lamps to be on $\Z+1/2$. We use the notation
$\ZO=\Z+1/2$ for the half-integers.

We shall make use of two representations for elements of $L$: on the
one hand, words over $\{a^{\pm1},b^{\pm1}\}$ as in the first
paragraph of this section; and also as a global description of lamp configurations and
final position, as follows: if at the end of its schedule the
lamplighter is at position $n\le0$ and for all $i\in\ZO$ the lamp
at position $i$ is in state $s_i$, then the corresponding element of
$L$ is written
\[s_{-N}\cdots s_{n-1}\markout s_n\cdots s_{-1/2}\markin s_{1/2}\cdots s_M,
\]
with $N,M$ minimal such that $s_{-N},s_M$ are non-zero. If $n\ge0$ the
same notation is used, but with now the `$\markin$' to the left of the
`$\markout$'. Thus the expressions for the generators are respectively
\[a=\markin0\markout,\qquad b=\markin1\markout,\qquad a^{-1}=\markout0\markin,\qquad
  b^{-1}=\markout1\markin.
\]
Global descriptions may be multiplied as follows: align the `$\markout$'
of the first with the `$\markin$' of the second, and add bitwise the strings
of $0$ and $1$. The `$\markin$' and `$\markout$' of the result are respectively
the `$\markin$' of the first and the `$\markout$' of the second operand.

From a description $u\markout v\markin w$ or $u\markin v\markout w$
one easily reads the final position $n$ of the lamplighter, and the
states $(s_i)_{i\in\Z+1/2}$ of the lamps. In that notation, the
product of $(r,m)$ and $(s,n)$ is $(t,m+n)$ with $t_i=r_i+s_{i-m}$.

Sometimes, the origin in a global description is unimportant, and is
omitted; so we may consider partial descriptions of the form
`$u\markout v$'. Such partial descriptions may be acted upon by $L$,
by right multiplication. They are naturally identified with the
homogeneous space $\langle a\rangle\backslash L$.

\subsection{The Cayley graph of \boldmath $L$}\label{ss:cayleyLL}
The Cayley graph of $L$, in the generating set $\{a,b\}^{\pm1}$, is a
special case of \emph{horocyclic product},
see~\cite{bartholdi-n-w:horo}. Let first $\mathscr T_1,\mathscr T_2$
be two $3$-regular trees, and choose on each of them a infinite ray
$\xi_i\colon(-\N)\to \mathscr T_i$. (These rays define points at
infinity $\omega_i$ in the respective trees). The corresponding
\emph{Busemann functions} are $h_i\colon \mathscr T_i\to\Z$ defined by
$h_i(v)=\lim_{n\to-\infty}n+d(v,\xi_i(n))$; the points close to
$\omega_i$ have very negative $h_i$, and points with same Busemann
function value form horocycles with respect to the boundary points
$\omega_i$. Now the \emph{horocyclic product} of these trees, with
respect to these Busemann functions, is
\[\gfL=\{(v_1,v_2)\in \mathscr T_1\times \mathscr T_2:h_1(v_1)+h_2(v_2)=0\}.
\]
Formally speaking, we have defined the vertex set of $\gfL$ above;
there is then an edge between $(v_1,v_2)$ and $(w_1,w_2)$ whenever
there are edges in $\mathscr T_i$ between $v_i$ and $w_i$ for all
$i=1,2$. One could also say that $h_i$ is extended linearly to edges,
and take the definition of $\gfL$ above at face value.

Yet equivalently, the Busemann function $h_1$ defines a graph morphism
$\mathscr T_1\to\gfC(\Z,\{\pm1\})$, and $-h_2$ defines likewise a graph morphism $\mathscr T_2\to\gfC(\Z,\{\pm1\})$. Then $\gfL$ is the pullback
\[\gfL=\mathscr T_1\times_{\gfC(\Z,\{\pm1\})}\mathscr T_2.\]

\begin{prop}[\cite{woess:dl}*{\S2}]
  The Cayley graph $\gfC(L,\{a,b\}^{\pm1})$ is the horocyclic product
  $\gfL$ defined above.
\end{prop}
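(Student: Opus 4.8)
The plan is to realize both $\mathscr T_1$ and $\mathscr T_2$ concretely as ``rooted-at-an-end'' trees whose vertices are partial lamp configurations, and then to check that the bijection this gives is an isomorphism of (edge-labelled) Cayley graphs. I will use the pullback description $\gfL=\mathscr T_1\times_{\gfC(\Z,\{\pm1\})}\mathscr T_2$, where $\mathscr T_1\to\gfC(\Z,\{\pm1\})$ is $h_1$ and $\mathscr T_2\to\gfC(\Z,\{\pm1\})$ is $-h_2$.

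First I would fix a combinatorial model of a $3$-regular tree with a marked end. For $\mathscr T_1$, take as vertices the pairs $(k,\phi)$ with $k\in\Z$ and $\phi\colon\{i\in\ZO:i<k\}\to\Z/2$ finitely supported, and put an edge between $(k,\phi)$ and $(k-1,\phi\restriction_{<k-1})$ for every $(k,\phi)$. Then each vertex has one neighbour ``below'' (level $k-1$) and two ``above'' (the two extensions of $\phi$ by a value at $k-\tfrac12$), so the graph is $3$-regular and is a tree; the ray $\xi_1(k)=(k,\mathbf 0)$ for $k\le 0$ converges to an end $\omega_1$, and a direct computation shows $h_1(k,\phi)=\lim_{n\to-\infty}\big(n+d((k,\phi),\xi_1(n))\big)=k$, since for $n\ll 0$ the geodesic from $(k,\phi)$ to $\xi_1(n)$ ascends to the level where $\phi$ becomes trivial and then descends along $\xi_1$. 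Symmetrically I model $\mathscr T_2$ by pairs $(k,\psi)$ with $\psi\colon\{i\in\ZO:i>k\}\to\Z/2$ finitely supported and edges between $(k,\psi)$ and $(k+1,\psi\restriction_{>k+1})$; here $h_2(k,\psi)=-k$. Both $h_1$ and $-h_2$ change by $1$ along every edge, so they indeed define graph morphisms to $\gfC(\Z,\{\pm1\})$.

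Next I would form the horocyclic product. Since the fibre-product condition $h_1+(-(-h_2))=\cdots$ — more simply, the constraint imposed by $\gfC(\Z,\{\pm1\})$ — forces the two level coordinates to agree, we get $V(\gfL)=\{((n,\phi),(n,\psi))\}$ with $\phi$ supported on the half-integers below $n$ and $\psi$ on those above $n$. Reading off the global description of elements of $L$ from \S\ref{ss:ll}, I send $((n,\phi),(n,\psi))$ to the element of $L$ whose lamplighter is at position $n$ and whose lamp at $i\in\ZO$ is $\phi(i)$ for $i<n$ and $\psi(i)$ for $i>n$; as $\ZO$ is the disjoint union of these two parts this is a bijection $V(\gfL)\to L$. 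Equivalently it is the map $\gfC(L,\{a,b\}^{\pm1})\to\gfL$ obtained from the universal property of the pullback applied to the two candidate morphisms $(s,n)\mapsto(n,s\restriction_{<n})$ and $(s,n)\mapsto(n,s\restriction_{>n})$, which one checks directly are graph morphisms to $\mathscr T_1$ and $\mathscr T_2$ compatible with the maps to $\gfC(\Z,\{\pm1\})$ (right multiplication by $a^{\pm1}$ leaves the lamps fixed, by $b^{\pm1}$ toggles the lamp at $n\pm\tfrac12$, and in each case the relevant restriction either keeps the configuration or extends/drops it by exactly one half-integer value).

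The remaining, and main, point is to verify that this bijection is an isomorphism of edge-labelled graphs, i.e.\ that the four neighbours of a vertex of $\gfL$ are exactly the images of right multiplication by $a^{\pm1}$ and $b^{\pm1}$. An edge of $\gfL$ at $((n,\phi),(n,\psi))$ projects to an edge of $\gfC(\Z,\{\pm1\})$, hence changes $n$ by $\pm1$; for the change $+1$ there is one admissible move in $\mathscr T_2$ (towards $\omega_2$, dropping the value $\psi(n+\tfrac12)$) and two in $\mathscr T_1$ (extending $\phi$ by either value at $n+\tfrac12$), giving two edges, and symmetrically two edges for the change $-1$; so $\gfL$ is $4$-regular. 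Translating each of these four moves back through the bijection, the two ``$+1$'' edges are exactly $(s,n)\mapsto(s,n+1)$ and $(s,n)\mapsto(s+\delta_{n+1/2},n+1)$, i.e.\ right multiplication by $a$ and by $b$, and the two ``$-1$'' edges are right multiplication by $a^{-1}$ and by $b^{-1}$. Since these four edges are distinct and exhaust both sides, the bijection is a graph isomorphism, and it matches the generator labels once $\mathscr T_1,\mathscr T_2$ are labelled accordingly. I expect the only real difficulty to be bookkeeping: keeping straight the half-integer indexing, which tree-neighbour points ``towards'' versus ``away from'' the chosen end, and keeping the label conventions consistent so that one obtains a labelled — not merely unlabelled — isomorphism.
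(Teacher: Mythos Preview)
Your proposal is correct and follows essentially the same approach as the paper's (very terse) sketch: realize the two trees concretely as partial lamp configurations indexed by level, form the horocyclic product, and check that the four neighbours correspond to right multiplication by $a^{\pm1},b^{\pm1}$. The paper phrases the bijection via $\{0,1\}$-labels on the geodesics from $v_i$ to $\xi_i$, but this is exactly your $(k,\phi)$, $(k,\psi)$ model written differently, and your edge-by-edge verification is the ``straightforward'' check the paper alludes to.
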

The proof is in fact straightforward: picture $\mathscr T_1$ as having
its boundary point $\omega_1$ at the bottom and $\mathscr T_2$ as
having its boundary point $\omega_2$ at the top. With our choice of
orientation, a point in $\gfL$ is then a pair of points $(v_1,v_2)$ of
same height.

Label all edges of $\mathscr T_i$ by $\{0,1\}$ with the condition that
the edges on the rays $\xi_i$ are all $0$. Then a vertex
$(v_1,v_2)\in\gfL$ may be uniquely identified by the following data: a
height $n\in\Z$, a finite string $u\in\{0,1\}^*$ expressing the labels
on the geodesic from $v_1$ to $\xi_1$, and a finite string
$v\in\{0,1\}^*$ expressing the labels on the geodesic from $v_2$ to
$\xi_2$. The corresponding element of $L$ is
`$\operatorname{reverse}(u)\markout v$', with the extra `$\markin$'
inserted $n$ places to the left of the `$\markout$'.

Consider now a finite subgraph of $\gfL$ as follows: choose $H\in\N$
and vertices $v_i\in\mathscr T_i$ with $h(v_1)+h(v_2)=H$. There are
height-$H$ binary trees in $\mathscr T_i$ consisting of all vertices
$w_i$ with $h(w_i)=d(v_i,w_i)\le H$, and their product, in $\gfL$,
gives a height-$H$ tetrahedron. Such a tetrahedron is displayed in
Figure~\ref{fig:ll} for $H=4$.

These tetrahedra in $\gfL$ are naturally nested: every vertex belongs
to increasing sequences of tetrahedra, and every height-$H$
tetrahedron is naturally part of two height-$(H+1)$ tetrahedra, one
extending above it and one below it.

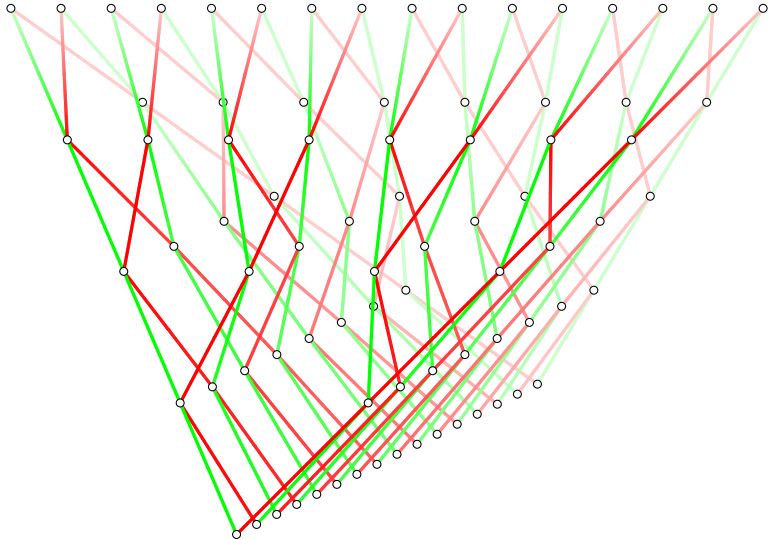
\begin{figure}
  \centerline{\begin{tikzpicture}[x={(5cm,0cm)},y={(-2cm,-1cm)},z={(0cm,6cm)}]
    \pgfmathsetmacro\h{4}
    \pgfmathsetlengthmacro\rad{3mm*pow(2,-\h)}
    \pgfmathsetmacro\hmo{\h-1}
    \foreach\k in {0,...,\h} {
      \pgfmathsetmacro\z{\k/\h}
      \pgfmathsetmacro\cz{1-\z}
      \pgfmathparse{pow(2,\h-\k)-1}
      \foreach\m in {0,...,\pgfmathresult} {
        \pgfmathsetmacro\y{-1+\m*2/(pow(2,\h-\k)-0.9999)}
        \pgfmathparse{pow(2,\k)-1}
        \foreach\l in {0,...,\pgfmathresult} {
          \pgfmathsetmacro\x{1-\l*2/(pow(2,\k)-0.9999)}
          \ifnum\k=\h\else
          \foreach\color/\pos in {green/1,red/0} {
            \pgfmathsetmacro\nx{1-(2*\l+(\pos ? mod(\m,2) : 1-mod(\m,2)))*2/(pow(2,\k+1)-1)}
            \pgfmathsetmacro\ny{-1+floor(\m/2)*2/(pow(2,\h-(\k+1))-0.9999)}
            \pgfmathsetmacro\nz{(\k+1)/\h}
            \pgfmathsetmacro\ncz{1-\nz}
            \pgfmathsetmacro\shade{60+40*\y}
            \pgfmathsetmacro\nshade{60+40*\ny}
            \shadepath{\color!\shade}{\color!\nshade}{\z*\x,\cz*\y,\z}{\nz*\nx,\ncz*\ny,\nz}
          }
          \fi
          \filldraw[fill=white] (\z*\x,\cz*\y,\z) circle (1.5pt);
        }
      }
    }
\end{tikzpicture}}
  \caption{(A portion of) the Cayley graph $\gfL$ of the lamplighter group, with generators $a$ in green and $b$ in red}\label{fig:ll}
\end{figure}

\subsection{The geometry of \boldmath $L$}
We describe some geometric aspects of the Cayley graph of $L$; these
will not be used elsewhere in the text, and serve as an illustration
of the relevance of $L$ to the tiling problem.

Firstly, $\gfL$ does not contain any embedded plane, so is a good
test case for Conjecture~\ref{conj:bs}. Indeed, assume there were
an injective, Lipschitz map $\Z^2\to\gfL$. In particular, the
elementary relation $[x,y]=1$ in $\Z^2$ would map to relations of
bounded length in $L$. Thus we may equivalently ask whether there
exists an embedded plane in the finitely presented group
\[L_N=\langle a,d\mid d^2,[d,d^{a^n}]\text{ whenever }0<n<N\rangle.\]
(Note that we use, for more convenience, the presentation of $L$ on
generators $\{a,d=a b^{-1}\}$). Now free groups do not contain
embedded planes, and
\begin{lem}
  The group $L_N$ is virtually free.
\end{lem}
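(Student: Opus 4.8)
The plan is to exhibit $L_N$ as the fundamental group of a finite graph of finite groups, which by the Bass–Serre / Karrass–Pietrowski–Solitar theory is exactly the condition for being virtually free. Concretely, I would argue that $L_N$ is an ascending HNN-type construction over a finite group that collapses: the subgroup $D_N=\langle d, d^a, \dots, d^{a^{N-1}}\rangle$ is generated by $N$ commuting involutions (commuting because the relators $[d,d^{a^n}]$ for $0<n<N$ force exactly this among these particular conjugates), so $D_N\cong(\Z/2)^N$ is finite of order $2^N$. Conjugation by $a$ shifts the generators: $d^{a^i}\mapsto d^{a^{i+1}}$. The point is that in $L_N$ — unlike in $L$ — there is \emph{no relation} forcing $d$ to commute with $d^{a^N}$, so the subgroup generated by $\{d^{a^i} : i\in\Z\}$ is an infinite iterated amalgam, not $(\Z/2)^{(\Z)}$.

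First I would make precise the structure as a graph of groups. Let $E_N=\langle d^a,\dots,d^{a^{N-1}}\rangle\cong(\Z/2)^{N-1}$, sitting inside $D_N$ in two ways: as the "last $N-1$" generators (the image of $E_N$ under $x\mapsto x$) and, after conjugating by $a^{-1}$, as the "first $N-1$" generators $\langle d,\dots,d^{a^{N-2}}\rangle$. Then $L_N$ is the HNN extension of $D_N\cong(\Z/2)^N$ with stable letter $a$ and associated subgroups these two copies of $E_N\cong(\Z/2)^{N-1}$, the isomorphism between them being conjugation by $a$. One checks this presentation: the HNN extension has presentation $\langle D_N, a \mid a\, d^{a^i}\, a^{-1}=d^{a^{i+1}}\ (0\le i\le N-2)\rangle$ together with the relations defining $D_N$; eliminating all $d^{a^i}$ for $i\ge 1$ in favour of $a^i d a^{-i}$ and checking that the relations of $D_N$ become exactly $d^2$ and $[d,d^{a^n}]$ for $0<n<N$ recovers the given presentation of $L_N$.

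Given this, I would invoke the standard fact (Karrass–Pietrowski–Solitar; or: a group acting on a tree with finite vertex stabilizers and finitely many orbits of edges, together with finite generation, is virtually free — see e.g. the Bass–Serre theory, or Dicks–Dunwoody) that the fundamental group of a finite graph of finite groups is virtually free. Here the underlying graph has one vertex (with group $D_N$, finite) and one loop edge (with group $E_N$, finite), so $L_N$ acts on its Bass–Serre tree with all stabilizers finite and is finitely generated, hence virtually free. Alternatively and more concretely, one can produce a finite-index free subgroup directly: the kernel of the map $L_N\to(\Z/2)^N\rtimes(\Z/2)$ (or a suitable finite quotient capturing the local structure) acts freely on the Bass–Serre tree, hence is free of finite rank, and has finite index.

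The main obstacle, and the only step requiring care, is verifying that the HNN presentation really coincides with the stated presentation of $L_N$ — i.e. that no extra relations sneak in and, crucially, that $D_N$ is genuinely $(\Z/2)^N$ and not smaller. The first point is a Tietze-transformation bookkeeping exercise; the second amounts to noting that $L$ surjects onto $L_N$ and that inside $L$ the elements $d, d^a,\dots,d^{a^{N-1}}$ generate a copy of $(\Z/2)^N$ (they are $N$ distinct $\Z/2$-factors of the base group of the wreath product), so their images in $L_N$ already generate $(\Z/2)^N$, forcing $|D_N|=2^N$ exactly. Everything else is a direct appeal to the structure theory of groups acting on trees.
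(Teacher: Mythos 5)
Your proof is correct, but it takes a genuinely different route from the paper. The paper argues concretely: it exhibits the explicit subgroup $K=\langle a,[d,a^N]\rangle$ of $L_N$, shows it has index $2^N$ by a normal-form count, and verifies via the Reidemeister--Schreier procedure that $K$ is free on its two generators. You instead split $L_N$ as an HNN extension of the finite base $D_N\cong(\Z/2)^N$ over the two shifted copies of $(\Z/2)^{N-1}$, check by Tietze transformations that this HNN presentation coincides with the given presentation of $L_N$ (your elimination of $d^{a^i}=a^{-i}da^i$ does recover exactly the relators $d^2$ and $[d,d^{a^n}]$, $0<n<N$), and then quote Karrass--Pietrowski--Solitar / Bass--Serre: a finitely generated fundamental group of a finite graph of finite groups is virtually free. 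Your approach is softer and avoids any Reidemeister--Schreier computation, and it makes the structural reason for virtual freeness transparent (all tree stabilizers are finite); the paper's approach buys an explicit free subgroup of rank $2$ and index $2^N$, which is more constructive. One small correction: the natural surjection goes $L_N\twoheadrightarrow L$ (since $L$ satisfies the additional relators), not $L\twoheadrightarrow L_N$ as you wrote; the intended use --- that the images of $d,d^a,\dots,d^{a^{N-1}}$ in $L$ generate $(\Z/2)^N$, so no collapse occurs in $D_N$ --- works with the corrected direction, and in fact this verification is redundant once the presentation identification is made, since the base group of an HNN extension embeds by Britton's lemma.
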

\begin{proof}
  Consider the subgroup $K=\langle a,[d,a^N]\rangle$ of $L_N$. On the
  one hand, $K$ has index $2^N$: every element of $L_N$ may be written
  in the form $d^{a^{n_1}}\cdots d^{a^{n_k}}a^\ell$, and $K$ contains
  every such expression in which $\#\{i:n_i\equiv m\pmod 2\}$ is even
  for all $m=0,\dots,N-1$. On the other hand, one can check using
  the Reidemeister-Schreier procedure that $K$ is free on its
  generators~\cite{lyndon-s:cgt}*{\S II.4}.
\end{proof}

The group $L$ is ``amenable''; there are numerous equivalent
definitions of this property, but the simplest to state is probably
the graph-theoretical one: \emph{For every $\epsilon>0$, there exists
  a finite subset $F\subset L$ whose boundary
  $F\cdot\{a,b\}\setminus F$ has cardinality at most $\epsilon\#F$}.
These subsets may simply be taken to be the tetrahedra mentioned
above: a tetrahedron of height $H$ contains $(H+1)\cdot 2^H$ vertices,
and its boundary consists only of the upper and lower strips, so
contains $2\cdot 2^H$ vertices.



\newcommand\tikzllw[5]{
  \draw (0,-1) -- (-1,0) -- (0,1) -- (1,0) -- (0,-1) -- (0,1)
  (-1,0) -- (1,0) -- (0,0);
  \begin{scope}[scale=0.3]
    \node at (-1,1) {#5$#1$};
    \node at (1,1) {#5$#2$};
    \node at (1,-1) {#5$#3$};
    \node at (-1,-1) {#5$#4$};
  \end{scope}}

\newcommand\bigllw[4]{\begin{tikzpicture}[scale=0.7,baseline=-0.5ex]
    \tikzllw{#1}{#2}{#3}{#4}{\relax}
  \end{tikzpicture}}

\newcommand\smallllw[4]{\begin{tikzpicture}[scale=0.35,baseline=-2]
    \tikzllw{#1}{#2}{#3}{#4}{\tiny}
  \end{tikzpicture}}

\newcommand\tab{\bigllw{a}{b}{a}{d}}
\newcommand\tbb{\bigllw{t}{b}{s}{b}}
\newcommand\tsb{\bigllw{s}{o}{s}{o}}
\newcommand\tdb{\bigllw{s}{d}{r}{d}}
\newcommand\ttb{\bigllw{t}{o}{t}{o}}
\newcommand\trb{\bigllw{r}{o}{r}{o}}

\newcommand\tas{\smallllw{a}{b}{a}{d}}
\newcommand\tbs{\smallllw{t}{b}{s}{b}}
\newcommand\tss{\smallllw{s}{o}{s}{o}}
\newcommand\tds{\smallllw{s}{d}{r}{d}}
\newcommand\tts{\smallllw{t}{o}{t}{o}}
\newcommand\trs{\smallllw{r}{o}{r}{o}}

\subsection{Dominos on \boldmath $L$}

We now finally begin tiling the lamplighter group. As our application is to the seeded tiling problem, as shown in Proposition~\ref{prop:ssft=svsft} instead of the sunny-side-up labelling we can simply work with allowed patterns, and specify a different tiling rule near the origin. In practice, we work with tilings of $L$, and in the end pose an additional restriction at the origin, in fact only at the identity element.

We have written the theory of tilings using the DHSs, and in the previous section we explained how to convert between these and the SFTs in the sense of Definition~\ref{def:SFT}, which we think of as the most general setting. In the case of the lamplighter group, we introduce two more presentations specific to this group: tetrahedron tilings, and Wang tilings for a particular generating set. Tetrahedron tilings are a special case of SFTs, using the (non-symmetric) generating set $\{e,a,b,ab^{-1}\}$, and they are the most convenient way to present the rules for the sea level construction in Section~\ref{ss:sea}. Wang tiles can be seen as a special case of DHS, and they are the most convenient way to present the rules for the comb construction in Section~\ref{ss:comb}.

For Wang tiles we always use the generating set $L=\langle a^{\pm1},b^{\pm1}\rangle$. A set of \emph{Wang tiles} consists of a finite set $A$ of edge colours, and a subset $\Pi\subseteq A^{\{a^{\pm1},b^{\pm1}\}}$. Vertices are coloured by elements of $\Pi$, and we check that the $A$-colourings match. We visualize each pattern $\pi\in\Pi$ as a diamond:
\[\pi=\bigllw pqrs\text{ means }\pi(a)=p,\pi(b)=q,\pi(a^{-1})=r,\pi(b^{-1})=s.\]
The resulting SFT is a subset $\Omega\subseteq\Pi^L$, namely
\[\Omega=\{\eta\in\Pi^L:\forall g\in L:\eta(g)(a)=\eta(g a)(a^{-1}),\;\eta(g)(b)=\eta(g b)(b^{-1})\}.\]
This can be seen as a special case of the $\Hom$-presentation.

Equivalently, we may specify a colour on every vertex of $\gfL$, and
impose constraints on the edges, or on small subgraphs. We found it
most convenient to impose constraints on small, height-$1$ tetrahedra,
as follows. We fix a finite set $A$ of vertex colours, and a subset
$\Theta\subseteq A^{\{1,a b^{-1},a,b\}}\cong A^4$.
The resulting \emph{tetrahedron tiling system} is a subset $\Omega\subseteq A^L$, namely
\[\Omega=\{\eta\in A^L:\forall g\in L:(\eta(g),\eta(g a b^{-1}),\eta(g a),\eta(g b))\in\Theta\}.
\]
Note that we may, and do, always assume that $\Theta$ is invariant under
the permutation $(1,2)(3,4)$ of its coordinates, because the condition applied at
$g a b^{-1}$ is precisely
$(\eta(g a b^{-1}),\eta(g),\eta(g b),\eta(g a))\in\Theta$.
For $\theta \in \Theta$ we denote this by $\bar \theta$.

We note, even though it is irrelevant to our construction, that the
``tetrahedra graph'' of $\gfL$, namely the graph whose vertices are
height-$1$ tetrahedra, and whose edges connect tetrahedra that share a
common vertex, is isomorphic to $\gfL$ (it corresponds to the
index-$2$ subgroup $\langle a,b^2a^{-1}\rangle$ of $L$, which is
isomorphic to $L$). We may thus equivalently label vertices or
tetrahedra by the given tiles.

We can convert between Wang tiles and tetrahedron tilings essentially
by the construction of the previous section, and we give the specialized formulas.

It is easy to convert a set of Wang tiles into a set of tetrahedron
tiles: assume $\Omega$ is given by the Wang tileset
$\Pi\subseteq A^{\{a^{\pm1},b^{\pm1}\}}$; then $\Omega\subseteq\Pi^L$
is given by the tetrahedra constraints
\[\Theta=\left\{(\alpha,\beta,\gamma,\delta)\in\Pi^{\{1,a b^{-1},a,b\}}:\begin{alignedat}{2}\alpha(a)&=\gamma(a^{-1}),&\beta(a)&=\delta(a^{-1}),\\\alpha(b)&=\delta(b^{-1}),&\beta(b)&=\gamma(b^{-1})\end{alignedat}\right\}.\]

Conversely, let $\Theta\subseteq A^{\{1,a b^{-1},a,b\}}$ be a collection
of tetrahedron tiles. The edge colours will be simply $C=\Theta$.
It is easy to see that tilings of the Wang tile set
\[\Pi=\left\{(\theta,\theta,\eta,\bar{\eta}):\theta_1=\eta_3\;[\in A]\right\}.\]
are in one-to-one corresponence (topological conjugacy) to tilings by $\Theta$.

After this section, we will mostly take a more relaxed approach with
terminology: ``SFT'' can refer to DHS, to SFT in the sense of
Definition~\ref{def:SFT}, or to one of the subclasses from this
section. This should always be clear from context, and we have given
the formulas for translating between these formalism in
Section~\ref{ss:cayley} and in the present section.

\section{The comb}\label{ss:comb}
We construct in $\gfL$ a geometric structure resembling a ``comb'': it
is an SFT $\Omega_c$ marking a bi-infinite line, the \emph{spine} of
the comb; rays exiting upwards and downwards from the spine, its
\emph{teeth} and \emph{antiteeth}; and some extra synchronizing
signals. We then show how, when coloured by a comb, the graph $\gfL$
simulates the plane $\Z^2$. This comb is defined by the following set
$\Pi_c$ of Wang tiles:
\[\tab\quad \ttb\quad \tbb\quad \tsb\quad \tdb\quad \trb;\]
as a graph SFT, it is $\Hom(\gfL,\gfF)$ for the following graph
$\gfF$, with generators $a$ in green and $b$ in two-headed red:
\[\begin{tikzpicture}[tile/.style={diamond,inner sep=-4pt}]
    \node at (-5,0) {\Large $\gfF:$};
    \node[tile] (ta) at (180:2*1.732) {$\tas$};
    \node[tile] (tb) at (150:2) {$\tbs$};
    \node[tile] (ts) at (0,0) {$\tss$};
    \node[tile] (td) at (210:2) {$\tds$};
    \node[tile] (tr) at (-30:2) {$\trs$};
    \node[tile] (tt) at (30:2) {$\tts$};

    \draw[green,->] (ta) edge[loop left] ()
    (tr) edge[loop right] () edge (td)
    (td) edge (tb) edge (ts)
    (ts) edge[loop above] () edge (tb)
    (tb) edge (tt)
    (tt) edge[loop right] ();
    \draw[red,->>] (td) edge (ta)
    (ta) edge (tb)
    (tt) edge[<<->>] (tr)
    (tr) edge[<<->>] (ts) to[loop below,->>] ()
    (ts) edge[<<->>] (tt);
    \draw[red,->>] (td) to[loop below] ();
    \draw[red,->>] (ts) to[loop below] ();
    \draw[red,->>] (tt) to[loop above] ();
    \draw[red,->>] (tb) to[loop above] ();
  \end{tikzpicture}
\]

Recall our notation for Wang tiles: the edge colour $a$ appears only
on the first tile $\tas$, and implies that each time a vertex $g\in L$
carries the tile $\tas$, its edges $(g,g a)$ and $(g,g a^{-1})$ carry
the colour $a$, and thus the whole coset $g\langle a\rangle$ carries
the tile $\tas$; this is the spine of the comb. Likewise, a ray of
$b$'s exits in the $b$ direction --- the teeth of the comb, and a ray
of $d$'s (thought of as reflected $b$'s) exists in the $b^{-1}$
direction --- the antiteeth. The tiles $\tbs$ and $\tds$ share a
signal $s$ which propagates via $\tss$, and are extended on their
other ends by respective signals $r$ and $t$. The remaining edges $a$
and $b$ edges may (but are not forced to) respectively be labelled $t$
and $o$. Let us write
\begin{equation}\label{eq:Z}
  Z = \{a^n b^m a^k : k,m,n\in\Z\},
\end{equation}
a union of $\langle a\rangle$-cosets.

\begin{lem}\label{lem:comb}
  If $\eta \in (\Pi_c)^L$ is a valid Wang tiling and $\eta_1 = \tas$, then
  \begin{align*}
    \forall n \in \Z:&\quad \eta_{a^n} = \tas,\\
    \forall n \in \Z, m \geq 1:&\quad \eta_{a^n b^m} = \tbs,\\
    \forall n \in \Z, m \geq 1:&\quad \eta_{a^n b^{-m}} = \tds,\\
    \forall n \in \Z, 1 \leq k < m:&\quad \eta_{a^n b^m a^{-k}} = \tss,\\
    \forall n \in \Z, 1 \leq k, m:&\quad \eta_{a^n b^{-m} a^{-k}} = \trs,\\
    \forall n \in \Z, 1 \leq k, m:&\quad \eta_{a^n b^m a^k} = \tts.
  \end{align*}
  Conversely, these formulas, together with $\eta_g = \tts$ whenever
  $g\notin Z$, define a valid Wang tiling of $L$.
\end{lem}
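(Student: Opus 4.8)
The lemma has a forward direction — that any valid $\Pi_c$-tiling $\eta$ with $\eta_1=\tas$ must obey the six displayed formulas on $Z$ — and a converse — that the labelling prescribed by those formulas together with "$\tts$ off $Z$" is a valid tiling. The unifying plan is to pass to the lamp-configuration picture of \S\ref{ss:ll}: an element $g\in L$ is recorded by the lamplighter's position $p\in\Z$ and its finite set $\Lambda$ of lit lamps (lamps sit at half-integers); right multiplication by $a^{\pm1}$ moves $p$ by $\pm1$ and fixes $\Lambda$, while $b^{\pm1}$ moves $p$ by $\pm1$ and flips the lamp lying between the old and new positions. Then $Z$ is exactly the set of $g$ whose lit set $\Lambda$ is a (possibly empty) block of consecutive lamps, and for such a $g$ the six families in the statement are the six mutually exclusive cases: $\Lambda=\emptyset$; and, when $\Lambda$ is a nonempty block, the lamplighter standing at the right end of the block's span (tooth), at its left end (antitooth), strictly inside the span, strictly to its right, or strictly to its left. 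In particular the six formulas assign $\eta$ unambiguously on $Z$, so the converse reduces to a finite check: verify the Wang identities $\eta_g(a)=\eta_{ga}(a^{-1})$ and $\eta_g(b)=\eta_{gb}(b^{-1})$ at each $g$ by splitting into the seven types (the six above plus $g\notin Z$) and reading, in each case, how $ga^{\pm1},gb^{\pm1}$ move $g$ between types. This is routine, and moreover halved by the symmetry of $\Pi_c$ under simultaneously reversing the lamplighter axis ($a\leftrightarrow a^{-1}$, $b\leftrightarrow b^{-1}$) and swapping the colours $b\leftrightarrow d$, $t\leftrightarrow r$, which swaps teeth with antiteeth and the $\tts$-region with the $\trs$-region.

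For the forward direction I would first record which tiles bear which edge colour: $a$ only on the horizontal edges of $\tas$; $b$ only on the $b$-edge of $\tas$ and the vertical edges of $\tbs$; $d$ only on the $b^{-1}$-edge of $\tas$ and the vertical edges of $\tds$; $t$ only on the $a$-edge of $\tbs$ and the horizontal edges of $\tts$; $r$ only on the $a^{-1}$-edge of $\tds$ and the horizontal edges of $\trs$; $s$ only on the $a^{-1}$-edge of $\tbs$, the horizontal edges of $\tss$, and the $a$-edge of $\tds$; and $o$ only on the vertical edges of $\tss,\tts,\trs$. Since colour $a$ sits on both horizontal edges of $\tas$ and on no other tile, $\eta_1=\tas$ propagates $\tas$ over the whole coset $\langle a\rangle$, giving the first formula. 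Each spine vertex then emits colour $b$ up its $b$-edge, which only $\tbs$ can receive, and $\tbs$ re-emits $b$ upward, forcing the tooth; symmetrically $d$ forces the antitooth. A tooth vertex emits $t$ to the right, forcing $\tts$ there, which self-propagates to give the $\tts$-formula; an antitooth vertex emits $r$ to the left, forcing $\trs$ there, which self-propagates to give the $\trs$-formula.

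The one formula not obtained by straight propagation is the one for $\tss$, and I expect this to be the main obstacle: the tooth emits colour $s$ leftwards, but this only forces $\eta_{a^nb^ma^{-1}}\in\{\tss,\tds\}$, and $\tss$ is not self-forcing (its $a^{-1}$-edge again carries $s$), so the leftward $s$-signal must be made to meet the antitooth at exactly the right place. I would resolve this by a two-sided pinning. Assume $m\ge2$ (the formula is vacuous otherwise) and write $v_k=a^nb^ma^{-k}$. A short lamp computation gives $v_1b=a^nb^{m-1}a$, $v_{m-1}b^{-1}=a^{n+m}b^{-(m-1)}a^{-1}$, $v_m=a^{n+m}b^{-m}$, so by the formulas already proved these three neighbours carry $\tts,\trs,\tds$ respectively. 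As $\tts$ bears $o$ on its $b^{-1}$-edge and the tooth $\tbs$ bears $s$ on its $a^{-1}$-edge, $\eta_{v_1}\in\{\tss,\tts,\trs\}\cap\{\tss,\tds\}=\{\tss\}$; likewise $\trs$ bears $o$ on its $b$-edge and $\tds$ bears $s$ on its $a$-edge, so $\eta_{v_{m-1}}\in\{\tss,\tts,\trs\}\cap\{\tbs,\tss\}=\{\tss\}$. Finally, if some $v_k$ with $2\le k\le m-2$ were not $\tss$, the least such $k$ has $v_{k-1}=\tss$, which forces $\eta_{v_k}\in\{\tss,\tds\}$ and hence $\eta_{v_k}=\tds$; then the colour $r$ on the $a^{-1}$-edge of $\tds$ forces $\trs$ at $v_{k+1}$, which self-propagates through $v_{k+2},\dots,v_{m-1}$, contradicting $\eta_{v_{m-1}}=\tss$. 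Thus $\eta_{v_k}=\tss$ throughout $1\le k\le m-1$; repeating over all $n$ and $m$ completes the forward direction, and the converse is the case check described in the first paragraph.
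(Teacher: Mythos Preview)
Your argument is correct and follows the same overall plan as the paper: Sudoku-style forward propagation for the six formulas, and the lamp-configuration case analysis for the converse. The only real difference is tactical, in the $\tss$ step. You pin both ends $v_1$ and $v_{m-1}$ via their \emph{vertical} neighbours in the already-established $\tts$ and $\trs$ regions, and then run a least-counterexample argument in between. The paper stays purely on the horizontal $\langle a\rangle$-coset: having first shown $\eta_{v_k}\in\{\tss,\tds,\trs\}$ for all $k\ge1$ (forward propagation of the colour set $\{r,s\}$ from the tooth), it then observes $v_m=a^{n+m}b^{-m}$ is $\tds$ with $a$-edge colour $s$, and since among $\{\tss,\tds,\trs\}$ only $\tss$ has $a^{-1}$-edge colour $s$, backward induction from $v_m$ immediately gives $\eta_{v_{m-1}}=\tss$, then $\eta_{v_{m-2}}=\tss$, and so on down to $v_1$. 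This is a one-line replacement for your two-sided pinning and needs neither the $\tts$ nor the $\trs$ formula as input, so the paper can (and does) prove $\tss$ before those two. Your route works just as well; it simply uses more of the already-built structure than is strictly necessary.
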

\begin{proof}
  Suppose that $\eta \in (\Pi_c)^L$ is a valid Wang tiling with
  $\eta_{e} = \tas$. We prove that $\eta$ satisfies the formulas by a
  series of Sudoku-style deductions, keeping track of possible values
  of cells.  Since the colour $a$ only appears in the tile $\tas$, we
  have
  \[ \forall n \in \Z: \eta_{a^n} = \tas.\]
  Since the $b$-colour of $\tas$ is $b$ and $b$ only appears as the
  $b^{-1}$-colour in the tile $\tbs$, we then have
  \[ \forall n \in \Z, m \geq 1: \eta_{a^n b^m} = \tbs, \]
  and by the same argument for $d$
  \[ \forall n \in \Z, m \geq 1: \eta_{a^n b^{-m}} = \tds. \]
  Since the $a^{-1}$-colour of $\tbs$ is $s$, we next have
  \[ \forall n, m \geq 1: \eta_{a^n b^m a^{-1}} \in \{\tss, \tds\}, \]
  and since the only tiles with $a$-colour in $\{r,s\}$ are
  $\{\tss, \tds, \trs\}$, and they have their $a^{-1}$-colours in
  $\{r,s\}$, we must have
  \[ \forall n, m \geq 1, k \geq 1: \eta_{a^n b^m a^{-k}} \in \{\tss,\tds,\trs\}.
  \]
  Since $\eta_{a^{n+m} b^{-m}} = \tds$ and
  $a^n b^m a^{-m} = a^{n+m} b^{-m}$, the $a$-colour of
  $\eta_{a^n b^m a^{-m}}$ is $s$, so the $a$-colour of
  $\eta_{a^n b^m a^{-k}}$ is $s$ for all $1 \leq k < m$ and thus
  $\eta_{a^n b^m a^{-k}}=\tss$ for all $1 \leq k < m$. Finally the
  $a$-colour $t$ of $\eta_{a^n b^m}$ forces $\eta_{a^n b^m a^k}=\tts$
  for all $k\ge1$, and the $a^{-1}$-colour $r$ of $\eta_{a^n b^{-m}}$
  forces $\eta_{a^n b^{-m} a^{-k}}=\trs$ for all $k\ge1$.

  Next, we show that the formulas of the lemma, together with
  $\eta_g = \tts$ for all $g \notin Z$, indeed define a valid
  tiling. This means, first, that the point $\eta$ given by the
  formulas is well-defined, i.e.\ exactly one value is given to each
  coordinate --- for this, it suffices to check that the sets
  $\{a^n : n \in \Z\}$, $\{a^n b^m : n \in \Z, m \geq 1\}$,
  $\{a^n b^{-m} : n \in \Z, m \geq 1\}$,
  $\{a^n b^m a^{-k} : n \in \Z, 1 \leq k < m \}$,
  $\{a^n b^m a^k : n \in \Z, 1 \leq k, m \}$ and
  $\{a^n b^{-m} a^{-k} : n \in \Z, 1 \leq k, m \}$, are disjoint. This
  also means that the neighbouring colours are correct whenever two
  tiles from these sets are adjacent, and that in every other
  $b$-direction, their colour is $o$ so that they match with the tile
  $\tts$ used to fill all other cells.

  To check these things, we consider the action of the lamplighter
  group on $\{0,1\}^\ZO\times\Z$, and the orbit of the configuration
  with the head at the origin and all edges with colour $o$ or
  $t$. The set $Z$ defined in~\eqref{eq:Z} corresponds to the
  configurations with a single run of $1$s (anywhere). Let us analyze
  the sets listed above, whose union is $Z$.

  The set $Z_1 = \{a^n : n \in \Z\}$ is the set of configurations
  where no $1$s have been written. The set
  $Z_2 = \{a^n b^m : n \in \Z, m \geq 1\}$ is the set where there is a
  run of $1$s and the head is exactly on the right border of the
  run. The set $Z_3 = \{a^n b^{-m} : n \in \Z, b \geq 1\}$ is the set
  of configurations where there is a run of $1$s and the head is
  exactly on the left border of the run. The set
  $Z_4 = \{a^n b^m a^{-k} : n \in \Z, m \geq 1, 1 \leq k < m\}$ is the
  set of configurations where there is a run of $1$s and the head is
  properly inside it. The set
  $Z_5 = \{a^n b^m a^k : n \in \Z, k,m \geq 1\}$ is the set of
  configurations where there is a run of $1$s and the head is strictly
  to its right. The set
  $Z_6 = \{a^n b^{-m} a^{-k} : n \in \Z, k,m \geq 1\}$ is the set of
  configurations where there is a run of $1$s and the head is strictly
  to its left.  Obviously these sets are disjoint. (The interpretation
  of the formula $a^n b^m a^{-m} = a^{n+m} b^{-m}$ used in the proof,
  in terms of this action, is that the head has either written a run
  of $1$s moving to the right and returned left, or has moved to the
  right and written a run of $1$s while returning.)

  Now, let us analyze neighbours of tiles in $Z$. It is straightforward
  to verify by a case analysis that whenever a tile in $Z$ has a
  non-$o$ edge colour in some direction, the corresponding neighbour is
  also in $Z$ and has the same colour in the opposite direction.

  We now study other edges, which must be in direction $b$ since $Z$
  is a union of $\langle a\rangle$-cosets.  The tile $\tss$ appears in
  $Z_4$ when the head is properly inside a run of $1$s. The $b$- and
  $b^{-1}$-edges have colour $o$. These neighbours are not in $Z$: in
  the action the head either leaves a nonempty run of $1$s to the left
  with a $0$ in between, or creates two runs of $1$s. The tiles $\tts$
  and $\trs$ in $Z_5$ and $Z_6$ are at positions at which the head is
  out of the run of $1$s, so their $b$- and $b^{-1}$-neighbours have
  two runs of $1$s and are also not in $Z$.
\end{proof}

Note that the Sudoku part of the argument could also be done ``bottom
up'' i.e. starting from $\eta_{a^{n+m} b^{-m}}$ towards
$\eta_{a^n b^m}$, in which case one would propagate the set
$\{\tbs, \tss, \trs\}$ instead.

\subsection{Simulating \boldmath $\Z^2$ on $\gfL$ marked by $\Omega_c$}
We consider the Cayley graph $\gfL$ of the lamplighter group as a
labelled graph, the labels given by the ``comb'' SFT $\Omega_c$ described
in the previous section. Our aim is to prove here
\begin{prop}\label{prop:combsimul}
  The graph $\gfL$, when labelled by any configuration from the SFT
  $\Omega_c$ with tile $\tas$ at the origin, simulates a graph
  containing the plane $\Z^2$, and the special configuration defined
  in Lemma~\ref{lem:comb} simulates precisely the plane. The same
  simulator can be used for each configuration.
\end{prop}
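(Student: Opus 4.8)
The plan is to produce a single explicit finite simulator and verify it directly. By Lemma~\ref{lem:gwa} it is most convenient to present it as a $\gfB$-labelled GWA simulator on $\gfL$, where $\gfB=1\sqcup\{\rightarrow,\leftarrow,\uparrow,\downarrow\}$ is the labelling graph of $\Z^2$; I will show that, run on the special comb of Lemma~\ref{lem:comb}, it produces exactly the Cayley graph of $\Z^2$, and, run on an arbitrary comb with $\tas$ at the origin, it produces a graph having that $\Z^2$ as a union of connected components.

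I first fix the vertex embedding. The grid vertices will be the pairwise distinct elements $a^nb^k$ for $n,k\in\Z$ (distinguished by the lamp run and the head position); by Lemma~\ref{lem:comb} the tile at $a^nb^k$ is $\tas$ when $k=0$ (spine), $\tbs$ when $k\ge1$ (teeth) and $\tds$ when $k\le-1$ (antiteeth). I therefore let $\lambda$ send $\tas,\tbs,\tds$ to the unique vertex of $\gfB$ and send the remaining tiles $\tss,\tts,\trs$ to $\bot$, so these ``intermediate'' tiles are never grid vertices but may be traversed by runs. The automata $M_\uparrow$ and $M_\downarrow$ simply read one $b$-edge, resp.\ one $b^{-1}$-edge, and halt; since from a grid tile the $b$-edge always lands on a grid tile, an $M_\uparrow$-run from $a^nb^k$ ends exactly at $a^nb^{k+1}$, as wanted.

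The work is in $M_\rightarrow$ (and its mirror $M_\leftarrow$). The key geometric fact is that the interior of each lamp run carries the signal $s$ along its $a$-edges, joining its $\tds$-end (head at the left border of the run) to its $\tbs$-end (head at the right border); an automaton can thus traverse a run and turn around \emph{precisely when it sees the $\tds$ or $\tbs$ tile}, without counting. Concretely, from a vertex $a^nb^k$: if its tile is $\tas$, read one $a$-edge (reaching $a^{n+1}$); if its tile is $\tbs$ (so $k\ge1$), follow $s$-labelled $a^{-1}$-edges until reaching the $\tds$-tile $a^{n+k}b^{-k}$, read one $b$-edge (reaching $a^{n+k}b^{-(k-1)}$, which is $\tas$ if $k=1$ and otherwise $\tds$), in the latter case follow $s$-labelled $a$-edges to the $\tbs$-tile $a^{n+1}b^{k-1}$, and finally read one $b$-edge, reaching $a^{n+1}b^{k}$; if its tile is $\tds$ (so $k\le-1$), run the $a\leftrightarrow a^{-1}$ mirror of this walk. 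One checks that the resulting GWA simulator satisfies the source/target constraints in the definition of a GWA simulator.

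Finally I verify the two claims. On the special comb, Lemma~\ref{lem:comb} pins down every tile met by each of these runs, so each run is forced and can be tracked with the bookkeeping of \S\ref{ss:ll}: using the identity $a^{m}b^{-w}a^{w}=a^{m-w}b^{w}$, an $M_\rightarrow$-run from $a^nb^k$ visits $a^nb^ka^{-j}$ for $0<j<k$, then $a^{n+k}b^{-k}$, then $a^{n+k}b^{-(k-1)}$, then $a^{n+k}b^{-(k-1)}a^{j}$ for $0<j<k-1$, and ends at $a^{n+1}b^{k}$; the cases $k=0$ and $k<0$ are immediate or mirror-symmetric. Hence between any two grid vertices there is exactly one $M_\rightarrow$-run, going to $a^{n+1}b^{k}$, and likewise for the other three directions, so the simulated graph is the Cayley graph of $\Z^2$ exactly, with no self-loops or multiple edges. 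For an arbitrary comb with $\tas$ at the origin, Lemma~\ref{lem:comb} still forces $\eta$ on all the vertices listed above (they belong to the six forced families), so every run issued from a vertex $a^nb^k$ stays in this forced region and behaves exactly as before; thus the full subgraph of $\gfL\rtimes\gfS$ on $\{a^nb^k\}$ is again a copy of $\Z^2$, and, being connected and joined by no run to anything outside it, it is a union of connected components. The same simulator $\gfS$ is used in all cases. The one genuinely delicate point is the design of $M_\rightarrow$: the naive route ``climb down the tooth to the spine, step across, climb back up'' cannot be realized by a finite automaton, since it would have to remember the height, and the merit of routing the horizontal move through the run interiors is exactly that the turns become self-synchronizing; verifying that these turns land on the intended group elements in all three cases is the main computation.
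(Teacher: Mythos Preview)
Your construction is essentially the paper's own simulator (you have merely renamed the paper's $\uparrow$ as your $\rightarrow$ and vice versa), and your verification for the spine and tooth cases, via the identity $a^{n}b^{m}a^{-m}=a^{n+m}b^{-m}$, is the same computation the paper carries out. The argument that on an arbitrary comb the runs issued from $\{a^nb^k\}$ never leave the forced region $Z$, hence the embedded $\Z^2$ sits as a union of components, is also the paper's reasoning.

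There is one real gap: your treatment of the antitooth case. ``Run the $a\leftrightarrow a^{-1}$ mirror of this walk'' is not a proof, and this is precisely the place where a careless mirror breaks the construction. If one applies the natural involution of the tileset (swapping $a\leftrightarrow a^{-1}$, $b\leftrightarrow b^{-1}$, $\tbs\leftrightarrow\tds$), the resulting walk from $a^{n}b^{-m}$ is $a^{*}$ to $a^{n-m}b^{m}$, then $b^{-1}$, then $(a^{-1})^{*}$, then $b^{-1}$, which lands at $a^{n-1}b^{-m}$: this is $\leftarrow$, not $\rightarrow$, and with that choice the simulated graph is \emph{not} $\Z^2$, since starting from $b^{-1}$ the compositions $\rightarrow\!\circ\!\uparrow$ and $\uparrow\!\circ\!\rightarrow$ (in your naming) reach $a$ and $a^{-1}$ respectively. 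The walk that does work keeps the $b$-steps unswapped: from $a^{n}b^{-m}$, follow $a$'s through $\tss$ to the $\tbs$-vertex $a^{n-m}b^{m}$, read $b$ to $a^{n-m}b^{m+1}$, follow $a^{-1}$'s through $\tss$ to the $\tds$-vertex $a^{n+1}b^{-(m+1)}$, and read $b$ to $a^{n+1}b^{-m}$. You should write this out and check the tiles along it exactly as you did for the tooth case; ``mirror-symmetric'' hides the one computation that can fail.
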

\begin{proof}
   Recall that the graph $\beta : \Z^2 \to \gfB$ has
  edge labels $E(\gfB) = \{\uparrow,\downarrow,\rightarrow,\leftarrow\}$, and vertex
  labels are trivial,  $V(\gfB) = \{\cdot\}$. The following figure
  gives the simulator. The split nodes have a vertex label from $\gfL$ and the trivial vertex label
  $\cdot$ from $\Z^2$, while the unsplit nodes correspond to vertices
  of $\gfB^*\setminus\gfB$ for the graph $\gfB$ defining the marking
  of $\Z^2$ (whose vertex label can be deduced from the edge labels):
  \[\hspace{-2cm}\begin{tikzpicture}[shorten >=1pt,node distance=1cm,auto,every state/.style={circle,inner sep=-6mm},x={(1,0)},y={(0,1)},z={(1,0.866)}]
      \ellipsesplitnode{a}{(0,0,0)}{4mm}{3mm}{$\tas$}{$\cdot$}
      \ellipsesplitnode{b}{(4,0,0)}{4mm}{3mm}{$\tbs$}{$\cdot$}
      \node[state] (c) at (-1,0,5) {$\tas$};
      \node[state] (d) at (-6,0,7.5) {$\tds$};
      \node[state] (e) at (-1,0,2.5) {$\tss$};
      \node[state] (f) at (-6,0,10) {$\tds$};
      \node[state] (g) at (-1,0,7.5) {$\tss$};
      \node[state] (h) at (4,0,2.5) {$\tbs$};
      \ellipsesplitnode{B}{(-4,0,0)}{4mm}{3mm}{$\tds$}{$\cdot$}
      \node[state] (C) at (-9,0,5) {$\tas$};
      \node[state] (D) at (-14,0,7.5) {$\tbs$};
      \node[state] (E) at (-9,0,2.5) {$\tss$};
      \node[state] (F) at (-14,0,10) {$\tbs$};
      \node[state] (G) at (-9,0,7.5) {$\tss$};
      \node[state] (H) at (-4,0,2.5) {$\tds$};
      \path[->]
    (a) edge[bend right=12] node[above] {$b/(0,{\rightarrow},0)$} (b)
        edge [out=100,in=80,looseness=8] node {$a/(0,{\uparrow},0)$} (a)
    (B) edge[bend right=12] node[above] {$b/(0,{\rightarrow},0)$} (a)
        edge [out=195,in=165,looseness=8] node {$b/(0,{\rightarrow},0)$} (B)
    (b) edge [out=15,in=-15,looseness=8] node {$b/(0,{\rightarrow},0)$} (b)
    (b) edge node[sloped,below] {$a^{-1}/(0,{\uparrow},1)$} (d)
        edge node[sloped,below] {$a^{-1}/(0,{\uparrow},1)$} (e)
    (B) edge node[sloped,below] {$a/(0,{\uparrow},1)$} (D)
        edge node[sloped,below] {$a/(0,{\uparrow},1)$} (E)
    (c) edge node[sloped,above] {$b/(1,{\uparrow},0)$} (b)
    (C) edge node[sloped,above] {$b^{-1}/(1,{\uparrow},0)$} (B)
    (d) edge node[sloped,above] {$b/(1,{\uparrow},1)$} (c)
        edge node[sloped,above] {$b/(1,{\uparrow},1)$} (f)
    (D) edge node[sloped,above] {$b^{-1}/(1,{\uparrow},1)$} (C)
        edge node[sloped,above] {$b^{-1}/(1,{\uparrow},1)$} (F)
    (e) edge node[sloped,above] {$a^{-1}/(1,{\uparrow},1)$} (d)
        edge [loop left] node[above left=2mm and -7mm] {$a^{-1}/(1,{\uparrow},1)$} ()
    (E) edge node[sloped,above] {$a/(1,{\uparrow},1)$} (D)
        edge [loop left] node[above left=2mm and -7mm] {$a/(1,{\uparrow},1)$} ()
    (f) edge node[sloped,above] {$a/(1,{\uparrow},1)$} (g)
        edge node[sloped,above] {$a/(1,{\uparrow},1)$} (h)
    (F) edge node[sloped,above] {$a^{-1}/(1,{\uparrow},1)$} (G)
        edge node[sloped,above] {$a^{-1}/(1,{\uparrow},1)$} (H)
    (g) edge [loop above] node[above right=0mm and -5mm] {$a/(1,{\uparrow},1)$} ()
        edge node[sloped,above] {$a/(1,{\uparrow},1)$} (h)
    (G) edge [loop above] node[above right=0mm and -5mm] {$a^{-1}/(1,{\uparrow},1)$} ()
        edge node[sloped,above] {$a^{-1}/(1,{\uparrow},1)$} (H)
    (h) edge node[sloped,below] {$b/(1,{\uparrow},0)$} (b)
    (H) edge node[sloped,below] {$b^{-1}/(1,{\uparrow},0)$} (B);
  \end{tikzpicture}
  \]
  
  The automaton is undirected: We only write half the edges in the
  diagram above, those whose label on the $\gfB^*$-side is $\uparrow$
  or $\rightarrow$; the missing $\downarrow$ and $\leftarrow$ edges
  are naturally recovered by applying the involutions of $\gfL$ and
  $\Z^2$, namely inverting the generators $a,b$ and reversing the
  direction of the arrows, switching the source and range tags.

  To prove that this simulator indeed produces $\Z^2$, let us explain
  what it does in terms of graph-walking automata, see
  Lemma~\ref{lem:gwa}. First, we recall that $\Z^2$ is simulated,
  within $\gfL$, as those vertices on the spine, teeth or antiteeth of
  the comb; namely, those vertices marked $\tas$, $\tbs$ or $\tds$. The
  bijection associates to $(m,n)\in\Z^2$ the element $a^n b^m$.

  The graph walking automaton for the generator `$\rightarrow$' of
  $\Z^2$ is simply ``if you're on the spine of the comb, move onto a
  tooth; if you're on a tooth, move further on the tooth; if you're on
  an antitooth, move towards the spine''. This is realized by the four
  arrows marked $b/(0,\rightarrow,0)$.

  The generator `$\uparrow$' is programmed as follows: ``if you're on
  the spine, move up the spine. If you're on a tooth, follow the `$s$'
  signal (using generator $a^{-1}$) till you reach an antitooth. Then
  do a step on that antitooth (using generator $b$), follow the '$s$'
  signal back up to a tooth (using generator $a$) and finally do a
  step on that tooth. If you're on antitooth, do the same, except you
  start by following the `$s$' signal using generator $a$ till you
  reach a tooth.''

  On the one hand, it is easy to see that this is what the above
  simulator does, following the big hexagon-shaped counterclockwise paths; on
  the other hand, let us convince ourselves that these operations
  indeed implement movement on $\Z^2$.

  The operation $\uparrow$ is $n\mapsto n+1$ on $\{(m,n)\in\Z^2\}$.
  The spine is the subset $Z_1=\{a^n:n\in\Z\}$ and corresponds to
  $\{0\}\times\Z$; so `$\uparrow$' is simply implemented by following
  the generator $a$. Consider now a point $(m,n)\in\Z^2$ with $m>1$,
  appearing on a tooth as $a^n b^m\in Z_2$. Then the path along the
  hexagon leads us successively (using $m$ times $a^{-1}$ along $Z_4$,
  following the `$s$' signal) to
  $a^n b^m a^{-m}=a^{n+m}b^{-m}\in Z_3$, then to $a^{n+m}b^{-(m-1)}\in Z_3 \cup Z_1$
  (we reach the spine $Z_1$ if $m = 1$), then (using $(m-1)$ times $a$ along
  $Z_4$ following the `$s$' signal) to $a^{n+m}b^{-(m-1)}a^{m-1}=a^{n+1}b^{m-1}$,
  and finally to $a^{n+1}b^m$. This corresponds to the point $(m,n+1)\in\Z^2$ as
  required. The same argument applies to the antitooth.
\end{proof}

\noindent This gives the first proof of Theorem~\ref{thm:main}:
\begin{cor}\label{cor:firstproof}
  The seeded tiling problem on the lamplighter group is undecidable.
\end{cor}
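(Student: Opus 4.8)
The plan is to combine Proposition~\ref{prop:combsimul} with the family reduction Theorem~\ref{thm:simulfamily}, and then recognise the resulting decision problem as a seeded tiling problem for $L$. Let $\gfS$ be the finite simulator built in the proof of Proposition~\ref{prop:combsimul}; by that proposition the same $\gfS$ serves every comb configuration. Write $\gfF_0$ for the finite graph defining the comb SFT, so $\Omega_c=\Hom(\gfL,\gfF_0)$, let $\Gamma$ be the family of $\gfF_0$-labelled graphs $(\gfL,\omega)$ with $\omega\in\Omega_c$ and $\omega(1)=\tas$, and let $\Delta=\{\Z^2\}$, the plane with its trivial vertex labelling. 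I would check the two hypotheses of Theorem~\ref{thm:simulfamily}. For hypothesis~(1): take $\gfG\in\Gamma$ to be the distinguished configuration of Lemma~\ref{lem:comb}; Proposition~\ref{prop:combsimul} gives $\gfG\rtimes\gfS\cong\Z^2$, which is \'etale (hence weakly \'etale) and weakly maps to $\Z^2$ via the identity. For hypothesis~(2): for \emph{every} $\gfG\in\Gamma$, Proposition~\ref{prop:combsimul} says $\gfG\rtimes\gfS$ contains a copy of $\Z^2$, so the plane weakly maps into $\gfG\rtimes\gfS$ by inclusion. Theorem~\ref{thm:simulfamily} then yields: if $\Gamma$ has solvable tiling problem, so does $\{\Z^2\}$. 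Since the tiling problem of $\Z^2$ is unsolvable~\cite{berger:undecidability}, the tiling problem of the family $\Gamma$ is unsolvable. (Alternatively one could use the seeded plane and cite Kahr--Moore--Wang, since the proof of Proposition~\ref{prop:combsimul} sends the identity of $L$ to the origin of the simulated $\Z^2$.)

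It remains to recognise the tiling problem of $\Gamma$ as a seeded tiling problem for $L$. An instance for $\Gamma$ is a finite $\gfF_0$-labelled graph $\gfF$, and $\Hom(\gfG,\gfF)\neq\emptyset$ for some $\gfG\in\Gamma$ exactly when there is a graph homomorphism $\psi\colon\gfL\to\gfF$ whose composite with $\gfF$'s labelling to $\gfF_0$ sends the identity to a vertex lying over $\tas$; indeed any such $\psi$ automatically determines a comb configuration, because $\Omega_c=\Hom(\gfL,\gfF_0)$ carries no constraint beyond being a graph morphism. By the dictionaries of Proposition~\ref{prop:sft=vsft} and Proposition~\ref{prop:ssft=svsft}, this is precisely the nonemptiness question for the SFT on $L$ obtained by combining the fixed comb tileset $\Pi_c$ with the finite data recorded by $\gfF$ and imposing the seed tile ``$\tas$ at the identity''. (The admissible seeds form a finite set of tiles rather than a single one; running a one-seed algorithm over this finite set shows the two formulations are equi-decidable, so this is a bona fide instance of the seeded tiling problem of $L$ in the sense of Theorem~\ref{thm:main}.) Hence an algorithm for the seeded tiling problem of $L$ would decide the tiling problem of $\Gamma$, contradicting the previous paragraph; so the seeded tiling problem of $L$ is unsolvable.

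The main obstacle --- showing that the lamplighter graph genuinely simulates the plane --- is Proposition~\ref{prop:combsimul}, which is already proven above, so the remaining steps are bookkeeping. The one point I would emphasise is why the \emph{family} reduction Theorem~\ref{thm:simulfamily} is needed rather than the single-graph Theorem~\ref{thm:simul}: seeding the identity with $\tas$ does not pin down the comb configuration, since Lemma~\ref{lem:comb} constrains it only on the set $Z$ of lamp configurations with a single run and leaves freedom outside $Z$, and this freedom is not carved out by any finite pattern. The clause ``the same simulator can be used for each configuration'' in Proposition~\ref{prop:combsimul} is exactly what makes the uniform application of Theorem~\ref{thm:simulfamily} possible. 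The only other subtlety is the translation between an abstract $\Hom$-instance for $\Gamma$ and an honest seeded Wang/SFT instance on $L$, which is routine given Propositions~\ref{prop:sft=vsft} and~\ref{prop:ssft=svsft}.
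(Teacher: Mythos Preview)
Your proof is correct and follows essentially the same approach as the paper: apply Theorem~\ref{thm:simulfamily} with $\Gamma$ the family of comb-labelled copies of $\gfL$ seeded by $\tas$ and $\Delta=\{\Z^2\}$, verify its two hypotheses via Proposition~\ref{prop:combsimul} and Lemma~\ref{lem:comb}, and then translate the tiling problem for $\Gamma$ into a seeded tiling problem for $L$. Your write-up is in fact more explicit than the paper's on the last step (the reduction from $\Gamma$'s tiling problem to the seeded problem on $L$) and on why the family version Theorem~\ref{thm:simulfamily} is needed rather than Theorem~\ref{thm:simul}; the paper leaves these points to the surrounding discussion.
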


\begin{proof}
  We apply Theorem~\ref{thm:simulfamily} with $\Gamma$ the family of
  all graphs obtained from $\gfL$ by labelling it by any configuration
  from the SFT $\Omega_c$ with tile $\tas$ at the origin, and
  $\Delta = \{\Z^2\}$.  Let $\gfS$ be the simulator constructed in the
  previous lemma.

  The first condition of Theorem~\ref{thm:simulfamily},
  \begin{enumerate}
  \item every graph in $\Delta$ may be simulated: for every $\gfH\in\Delta$
  there is $\gfG\in\Gamma$ such that $\gfG\rtimes\gfS$ is \'etale and weakly
  maps to $\gfH$,
  \end{enumerate}
  holds, because the unique element $\Z^2 \in \Delta$ is even
  isomorphic to $\gfG \rtimes \gfS$, for $\gfG$ the special
  configuration defined in Lemma~\ref{lem:comb}; since $\Z^2$ is
  \'etale, so is the simulating graph.
  
The second condition of Theorem~\ref{thm:simulfamily},
  \begin{enumerate}
  \setcounter{enumi}{1}
  \item simulated graphs are images of $\Delta$: for every
    $\gfG\in\Gamma$ there is $\gfH\in\Delta$ that weakly maps to
    $\gfG\rtimes\gfS$,
  \end{enumerate}
  holds, because every $\gfG \in \Gamma$ contains the configuration
  simulating $\Z^2$ as a subgraph, so we even have
  $\Z^2 \subset \gfG \rtimes \gfS$.
\end{proof}

The simulator given above is simple enough that it may be directly
translated to a tileset on $\gfL$, and we do so here, incorporating some
ad hoc simplifications.

We construct, from a seeded Wang tileset $T$ on a
half-plane, a seeded Wang tileset $\Pi_T$ on $L$ which tiles if and
only if $T$ tiles. The construction could easily be extended to the
whole plane, at the cost of extra clutter.  Let us consider the
half-plane
\[\mathbb H=\{(m,n)\in\Z^2: m \ge n\}.
\]
For a colour set $C$, a Wang tileset is a subset
$T\subseteq C^{\{S,E,N,W\}}=C^4$ interpreted as follows:
$(i,j,k,\ell)\in T$ is a square with colours $i,j,k,\ell$ respectively
on the south, east, north, west sides. A valid tiling of the
half-plane $\mathbb H$ is an assignment $\zeta\colon\mathbb H\to T$
with $(\zeta_{(m,n-1)})_N=(\zeta_{(m,n)})_S$ and
$(\zeta_{(m,n)})_E=(\zeta_{(m+1,n)})_W$ for all $(m,n)\in\mathbb H$.

It is easy to show that the (un)decidability of the seeded tiling
problem on this rotated half-plane is equivalent to that on the
standard half-plane. The use of this rotated half-plane $\mathbb H$
simplifies somewhat the construction; we use the embedding of
$\mathbb H$ into $\gfL$ given by $(m,n)\mapsto a^n b^{m-n}$.

\begin{prop}
  Let $C$ be a finite set of colours, let
  $T\subseteq C^{\{S,E,N,W\}}=C^4$ be a Wang tileset for $\mathbb H$,
  and let $t_0\in T$ be a seed tile. Then a tileset $\Pi_T$ on $L$ and
  a seed $\pi_0\in\Pi_T$ may be algorithmically constructed, such that
  there exists a valid tiling of $\mathbb H$ by $T$ with value $t_0$
  at $(0,0)$ if and only if there exists a valid tiling of $L$ by
  $\Pi_T$ with value $\pi_0$ at $1$.
\end{prop}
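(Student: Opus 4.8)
The plan is to realise $\Pi_T$ as a superposition of two layers. The first, the \emph{comb layer}, is just the six-tile Wang set $\Pi_c$ of Section~\ref{ss:comb}; for the seed I take $\pi_0$ to carry comb tile $\tas$, so that by Lemma~\ref{lem:comb} any valid configuration with $\pi_0$ at $1$ exhibits, on the set $Z$ of~\eqref{eq:Z}, the canonical comb: the spine $\langle a\rangle$ (tiles $\tas$), the teeth $\{a^nb^m:m\ge1\}$ (tiles $\tbs$), the antiteeth $\{a^nb^{-m}:m\ge1\}$ (tiles $\tds$), and the $s$-scaffolding between them (tiles $\tss$). I embed $\mathbb H$ into $\gfL$ by $\iota(m,n)=a^nb^{m-n}$, whose image is the spine together with the teeth; the East move $(m,n)\mapsto(m+1,n)$ of $\mathbb H$ is then realised by right multiplication by $b$, and the North move $(m,n)\mapsto(m,n+1)$ --- which is only needed when $M:=m-n\ge1$, i.e.\ exactly when $(m,n+1)\in\mathbb H$ --- is realised by the hexagonal path of Proposition~\ref{prop:combsimul}: from $a^nb^M$ descend the scaffolding by $M$ steps of $a^{-1}$ to $a^nb^Ma^{-M}=a^{n+M}b^{-M}$, cross one $b$-step of the antitooth to $a^{n+M}b^{-(M-1)}$, and climb back by $M-1$ steps of $a$ to $a^{n+1}b^{M-1}=\iota(m,n+1)$.

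The second, \emph{colour}, layer carries a $T$-tiling of $\mathbb H$ through this picture. On each vertex tiled $\tas$ or $\tbs$, i.e.\ on each $\iota(m,n)$, it stores a Wang tile $\zeta\in T$; on each $\tss$ or $\tds$ vertex it stores a pair of elements of $C$ (two signals, one flowing towards the antitooth and one away from it). The constraints are all imposed on single Wang neighbourhoods: across the $b$-edge that realises an East move, the stored Wang tiles agree in the East--West sense; along $a^{\pm1}$-edges of the scaffolding both signals are copied unchanged; at the bottom $b$-edge of a hexagon the ``towards'' signal of its descending branch is identified with the ``away'' signal of its ascending branch, so that the payload emitted at $\iota(m,n)$ is exactly the one arriving at $\iota(m,n+1)$; and at a tooth root $\iota(m,n)$ the departing payload equals the North colour of its $\zeta$-tile while the arriving payload equals its South colour. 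Vertices tiled $\trs$ or $\tts$, and all vertices off $Z$, carry no colour data and no colour constraint; finally $\pi_0$ stipulates $\zeta=t_0$. This is a finite, effectively listable set of allowed Wang patterns, so $\Pi_T$ and $\pi_0$ are constructed algorithmically. (Conceptually this colour layer is just the composite $\Hom(\mathbb H,\gfF)\cong\Hom(\text{combed }\gfL,\gfF^\gfS)$ of Lemma~\ref{lem:almostadjoint}, specialised to the simulator of Proposition~\ref{prop:combsimul} and made concrete.)

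For the equivalence: from a valid $T$-tiling $\zeta$ of $\mathbb H$ with $\zeta_{(0,0)}=t_0$ one builds a $\Pi_T$-tiling by putting the canonical comb on $Z$, $\tts$ off $Z$, transporting $\zeta$ to the spine and teeth via $\iota$, and setting every hexagon's payload equal to the North colour at its lower endpoint, which the North--South rule of $\zeta$ makes equal to the South colour at its upper endpoint; this is valid and has $\pi_0$ at $\iota(0,0)=1$. Conversely, a valid $\Pi_T$-tiling with $\pi_0$ at $1$ has the canonical comb on $Z$ by Lemma~\ref{lem:comb}, and the colour rules then force the Wang tiles on $\iota(\mathbb H)$ to satisfy both matching conditions --- East--West directly, North--South because the scaffolding carries the North colour of $\iota(m,n)$ unchanged to the South colour of $\iota(m,n+1)$ --- with $\zeta_{(0,0)}=t_0$ forced by $\pi_0$; any spurious extra combs elsewhere merely carry their own valid $T$-tilings and are harmless. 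The one place that needs genuine care --- and the step I expect to be the main obstacle --- is the scaffolding bookkeeping: one must check that each $\tss$ or $\tds$ vertex lies on precisely one descending and one ascending hexagon branch (so that two $C$-slots suffice and no unbounded memory is needed), and that tracing the turnaround $b$-step through the identity $a^nb^ma^{-m}=a^{n+m}b^{-m}$, tile by tile, really does deliver the payload to $\iota(m,n+1)$ with the right orientation. This is exactly the correctness proof of the hexagon simulator in Proposition~\ref{prop:combsimul}, rewritten at the level of individual tiles with a colour of $C$ riding along; everything else is routine verification.
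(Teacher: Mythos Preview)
Your approach is correct and matches the paper's: both superpose the comb tileset $\Pi_c$ with a colour layer that transports Wang-tile data along the scaffolding to realise the North move. The only notable implementation difference is that the paper, rather than running separate descending and ascending signals, sends the pair $(S,N)$ of the tooth's tile down to the antitooth as a single composite colour $ik$ and then enforces the North--South match directly as a Wang edge constraint along the antitooth's $b$-edges; this sidesteps the two-signal bookkeeping you flag as the main obstacle, but the two schemes encode the same data and are equivalent.
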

\begin{proof}
  The tileset $\Pi_T$ will be given by product tiles, with in the
  first layer a tile from $\Pi_c$ and in the second layer a word of
  length $\in\{0,1,2\}$ over $C$. We denote by $\varepsilon$ the empty
  word (which we of course assume disjoint from $C$).

  For all tiles $(i,j,k,\ell) \in T$ we take the following product
  tiles in $\Pi_T$:
  \begin{gather*}
    \Bigg(\tab,\bigllw{\varepsilon}{j}{\varepsilon}{i}\Bigg), \qquad
    \Bigg(\ttb,\bigllw{\varepsilon}{\varepsilon}{\varepsilon}{\varepsilon}\Bigg),\qquad
    \Bigg(\trb,\bigllw{\varepsilon}{\varepsilon}{\varepsilon}{\varepsilon}\Bigg),\\
    \Bigg(\tbb,\bigllw{\varepsilon}{j}{ik}{\ell}\Bigg),\qquad
    \Bigg(\tsb,\bigllw{ik}{\varepsilon}{ik}{\varepsilon}\Bigg),\qquad
    \Bigg(\tdb,\bigllw{ik}{k}{\varepsilon}{i}\Bigg).
  \end{gather*}
  The seed tile is
  $\pi_0 = (\tas, \smallllw{\varepsilon}{j}{\varepsilon}{i})$ where
  $t_0 = (i,j,k,\ell)$.

  First, we prove that if there exists a valid tiling
  $\eta \in (\Pi_T)^L$ with $\eta_1 = \pi_0$ then $T$ admits a valid
  tiling with seed $t_0$. To see this, observe that we must have a
  valid instance of $\Pi_c$ on the first layer, so we can define the
  sets $Z=Z_1\sqcup Z_2\sqcup Z_3\sqcup Z_4\sqcup Z_5\sqcup Z_6$ as in
  the proof of Lemma~\ref{lem:comb} and when $g\in Z_i$ we have
  $\eta_g=(X_i,t)$ for some $t$, with the correspondence
  \[X_1=\tas,\quad X_2=\tbs,\quad X_3=\tds,\quad X_4=\tss,\quad X_5=\tts,\quad X_6=\trs.
  \]
  From $\eta$ we deduce a configuration $\zeta\in T^{\mathbb
    H}$ as follows: if
  $\eta_{a^n b^m} = (\tbs, \smallllw{\varepsilon}{j}{i
    k}{\ell})$ for $n \in \Z, m \geq 1$ then define
  $\zeta_{(m+n,n)} = (i,j,k,\ell)$, and if
  $\eta_{a^n} = (\tas,
  \smallllw{\varepsilon}{j}{\varepsilon}{i})$ then define
  $\zeta_{(n,n)} = (i,j,k,\ell)$ for any
  $k,\ell$ such that $(i,j,k,\ell) \in T$, and set
  $\zeta_{(0,0)} = t_0$.

  By construction we have $\zeta_{(0,0)} = t_0$, and we need to check
  $(\zeta_{(m,n)})_E = (\zeta_{(m+1,n)})_W$ and
  $(\zeta_{(m,n-1)})_N = (\zeta_{(m,n)})_S$.

  We have $(\zeta_{(m,n)})_E = (\zeta_{(m+1,n)})_W$ directly from the
  colouring rules of $\Pi_T$, since $(\zeta_{(m,n)})_E$ is the
  $b$-colour on the second layer of the tile at $\eta_{a^n b^{m-n}}$
  and $(\zeta_{(m+1,n)})_W$ is the $b^{-1}$-colour on the second layer
  of the tile at $\eta_{a^n b^{m+1-n}}$.

  We now check the formula $(\zeta_{(m,n-1)})_N = (\zeta_{(m,n)})_S$.
  If $m>n$ and $\zeta_{(m,n)} = (i,j,k,\ell)$ then the second layer of
  $\eta_{a^n b^{m-n}}$ is $\smallllw{\varepsilon}{j}{i k}{\ell}$ so
  its $a^{-1}$-colour is $i k$. It follows that in all the tiles
  $\eta_{a^n b^{m-n} a^{-p}}$ with $1 \leq p < m-n$ (namely, at
  positions in $Z_4$, where the first layer is $\tss$) the second
  layer contains the tile
  $\smallllw{i k}{\varepsilon}{i k}{\varepsilon}$. Therefore at
  $\eta_{a^n b^{m-n} a^{n-m}} = \eta_{a^m b^{n-m}}$ (at a position in
  $Z_3$, where the first layer is $\tds$), by the choice of tiles
  overlayed on $\tds$, the tile on the second layer must precisely be
  $\smallllw{i k}{k}{\varepsilon}{i}$. In this manner, we have proven
  the following property $(\star_{m,n})$ for $m>n$:
  \begin{align*}
    &\text{The $b$-colour of the second layer of $\eta_{a^m b^{n-m}}$ is $(\zeta_{(m,n)})_N$.}\\
    &\text{The $b^{-1}$-colour of the second layer of $\eta_{a^m b^{n-m}}$ is $(\zeta_{(m,n)})_S$.}
  \end{align*}

  By $(\star_{m,n-1})$ the $b$-colour at $\eta_{a^m b^{n-1-m}}$ is
  $(\zeta_{(m,n-1)})_N$. If $m>n$ then by $(\star_{m,n})$ the
  $b^{-1}$-colour at $\eta_{a^m b^{n-m}}$ is $(\zeta_{(m,n)})_S$;
  since $a^m b^{n-m}$ is the $b$-neighbour of $a^m b^{n-1-m}$, we have
  $(\zeta_{(m,n-1)})_N = (\zeta_{(m,n)})_S$ as required. If $m=n$ then
  the $b^{-1}$-colour at $\eta_{a^m}$ is $(\zeta_{(n,n)})_S$ and again
  we have $(\zeta_{(n,n-1)})_N = (\zeta_{(n,n)})_S$.  We have proven
  that a valid $\pi_0$-seeded tiling for $\Pi_T$ yields a valid
  $t_0$-seeded tiling of $\mathbb H$.
  
  Conversely, if there is a valid tiling $\zeta\in T^{\mathbb H}$ with
  $\zeta_{(0,0)} = t_0$, then the above proof shows rather directly
  how to construct a valid configuration $\eta \in (\Pi_T)^L$ with
  $\eta_1=\pi_0$: set
  \begin{align*}
    \forall n \in \Z: \zeta_{(n,n)} = (i,j,k,\ell) \implies& \eta_{a^n} = \Big(\tas,\smallllw{\varepsilon}{j}{\varepsilon}{i}\Big),\\
    \forall m>n \in \Z: \zeta_{(m,n)} = (i,j,k,\ell) \implies& \eta_{a^n b^{m-n}} = \Big(\tbs,\smallllw{\varepsilon}{j}{i k}{\ell}\Big),\\
    \forall m>n\in \Z: \zeta_{(m,n)} = (i,j,k,\ell) \implies& \eta_{a^m b^{n-m}} = \Big(\tds,\smallllw{i k}{k}{\varepsilon}{i}\Big),\\
    \forall m>p>n \in \Z: \zeta_{(m,n)} = (i,j,k,\ell) \implies& \eta_{a^n b^{m-n} a^{p-m}} = \Big(\tss,\smallllw{i k}{\varepsilon}{i k}{\varepsilon}\Big),\\
    \forall p>m>n \in \Z: \quad  & \eta_{a^n b^{m-n} a^{p-m}} = \Big(\tts,\smallllw{\varepsilon}{\varepsilon}{\varepsilon}{\varepsilon}\Big),\\
    \forall m>n>p \in \Z: \quad & \eta_{a^n b^{m-n} a^{p-m}} = \Big(\trs,\smallllw{\varepsilon}{\varepsilon}{\varepsilon}{\varepsilon}\Big),\\
    \forall g \notin Z:\quad & \eta_g = \Big(\tts,\smallllw{\varepsilon}{\varepsilon}{\varepsilon}{\varepsilon}\Big).\qedhere
  \end{align*}
\end{proof}

\section{The sea level}\label{ss:sea}

In this section we show, again, that $\gfL$ can simulate the plane 
$\Z^2$. We do this in steps: (1) using an SFT, we can mark trees in
$\gfL$, or more precisely subgraphs of the form
$\mathscr T\times_\Z\Z$ and of the form $\Z\times_\Z\mathscr T$ in the
pullback description of $\gfL$. They correspond to limits of faces of
tetrahedra. (2) using another SFT, we can mark vertices at height $0$
in $\gfL$. A vertex at height $0$ is naturally identified with a point
in the plane: from $u\markinout v$ we read $u,v$ as binary expansions
of integers $x,y$ respectively, and identify $u\markinout v$ with
$(x,y)$. (3) using a simulation, we show how the arithmetic operations
$(x,y)\mapsto(x\pm1,y)$ and $(x,y)\mapsto(x,y\pm1)$ can be described
in $\gfL$.

In fact, we first explain how the construction lets us simulate the
quadrant $\N^2$, and then explain which changes let us simulate a
whole plane. This last step is in fact unnecessary, since the quadrant
simulates the plane (Example~\ref{ex:simquadrant}) and simulation is
transitive (Lemma~\ref{lem:simultrans}).

In some sense, this simulation is more efficient than the ``comb''
from \S\ref{ss:comb}: the square $[1,2^n]^2\subset\Z^2$ is simulated
within a tetrahedron of height $2n$, and therefore of size
$(2n+1)\cdot 2^{2n}$.

\newcommand\true{\top}
\newcommand\false{\bot}
\subsection{Marking a ray \boldmath $\langle a\rangle$}
The SFTs in this section will be given by patterns on tetrahedra. We
choose as alphabet the Boolean algebra $\mathbb B=\{\false,\true\}$
meaning false and true, and define $\Pi_\leftarrow$ as the following
set of patterns:
\[\Pi_\leftarrow=\left\{(\alpha,\beta,\gamma,\delta)\in \mathbb B^4=\mathbb B^{\{1,a b^{-1},a,b\}}:
    \begin{aligned}
      \alpha\vee\beta&\Longrightarrow\gamma\wedge\delta\\
      \gamma\vee\delta&\Longrightarrow\alpha\neq\beta
    \end{aligned}\right\}.
\]

In the geometric model of the Cayley graph, that means that above
every $\true$ node both neighbours are $\true$, while below a $\true$
node precisely one of the neighbours is $\true$. We shall see that
$\Pi_\leftarrow$ forces a ray in $\gfL$ to be marked $\true$. In passing, we
observe another property of the subshift generated by $\Pi_\leftarrow$.

\begin{defn}
  Let $X$ be a compact metric space. A topological dynamical system
  $G \curvearrowright X$ is \emph{almost minimal} if there is a unique
  $G$-fixed point $0 \in X$, and
  $\forall x \neq 0: \overline{G x} = X$.
\end{defn}

We briefly recall and extend our notation for lamplighter group
elements that we introduced in~\S\ref{ss:ll}. Recall that the lamps in the lamplighter group are
at positions in the half-integers $\ZO = \Z + \frac12$. Every $g\in L$ may be written as $g=(s,n)$ with
$s\colon\ZO\to\Z/2$ and $n\in\Z$. We write $s_{>n}$ for the right
subword $s(n+1/2)s(n+3/2)\dots$ and $s_{<n}$ for the left subword
$\dots s(n-3/2)s(n-1/2)$.

There is a natural action of $L$ on $(\Z/2)^\ZO$, if we interpret
$(\Z/2)^\ZO$ as bi-infinite strings over $\{0,1\}$ with a $\markout$
marker at position $0$. The differences between descriptions of $L$
and $(\Z/2)^\ZO$ is that elements of $L$ also have a $\markin$ marker,
but on the other hand are almost everywhere $0$.

We shall write, here and throughout this section, $\Omega_\leftarrow$
for the subshift of $\mathbb B^L$ defined by $\Pi_\leftarrow$. (As the
astute reader may have guessed, there will soon be a $\Pi_\rightarrow$
and $\Omega_\rightarrow$.)

\begin{lem}\label{lem:XLCharacterization}
  For $t\in (\Z/2)^\ZO$ define $\eta(t)\in\mathbb B^L$ as follows:
  \[\text{for }g=(s,n)\in L: \eta(t)_g \iff s_{>n}=t_{>n}.
  \]
  Then the correspondence $t\mapsto\eta(t)$ is an $L$-equivariant,
  surjective map $(\Z/2)^\ZO\twoheadrightarrow\Omega_\leftarrow$. It
  collapses $\{t\in(\Z/2)^\ZO:t_{>0}\text{ is infinitely supported}\}$
  to the point $\false^L$, and is injective on its complement, so
  $t\mapsto\eta(t)$ presents $\Omega_\leftarrow$ as
  \[\Omega_\leftarrow=\{\eta(t):t\in(\Z/2)^\ZO\}=(\Z/2)^\ZO/(t\sim t'\text{ if both $t_{>0}$ and $t'_{>0}$ are infinitely supported}).\]
    The subshift $\Omega_\leftarrow$ is almost minimal.
\end{lem}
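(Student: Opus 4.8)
The plan is to establish the five clauses of the lemma in order, with essentially all of the substance in surjectivity, and then to deduce almost minimality. First, using the global‑description calculus of \S\ref{ss:ll}, I record the moves $ga=(s,n+1)$, $gb=(s+\delta_{n+1/2},n+1)$, $gab^{-1}=(s+\delta_{n+1/2},n)$ for $g=(s,n)$, where $\delta_j$ is the indicator of the lamp position $j$ and addition is in $(\Z/2)^{\ZO}$. From the defining formula one reads off that $\eta(t)_{ga}=\eta(t)_{gb}$, that $\eta(t)_g$ is $\true$ exactly when $s(n+\tfrac12)=t(n+\tfrac12)$ and $\eta(t)_{ga}=\true$, and that $\eta(t)_{gab^{-1}}$ is $\true$ exactly when $s(n+\tfrac12)\neq t(n+\tfrac12)$ and $\eta(t)_{ga}=\true$; feeding these into the two implications defining $\Pi_\leftarrow$ makes every translate of the tetrahedron $\{1,ab^{-1},a,b\}$ admissible, so $\eta(t)\in\Omega_\leftarrow$. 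Equivariance is then a one‑line check on the generators $a,b$. Next, $\eta(t)_{(s,n)}$ can hold only when $t_{>n}$ is finitely supported, so if $t_{>0}$ is infinitely supported then so is every $t_{>n}$ and $\eta(t)=\false^L$; if $t_{>0}$ is finitely supported then every $t_{>n}$ is too, the $\true$-set of $\eta(t)$ at height $n$ is exactly $\{(s,n):s_{>n}=t_{>n}\}$, and stepping once down to the unique $\true$ down‑neighbour recovers the symbol $t(n-\tfrac12)$ --- whence the collapsing claim, injectivity on the complement, and (once surjectivity is known) the quotient description.

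For surjectivity, fix $\eta\in\Omega_\leftarrow$ with $\eta\neq\false^L$; since $\Pi_\leftarrow$ propagates truth upward and forces exactly one $\true$ down‑neighbour, $\eta$ has $\true$-nodes at every height. The heart is to show the $\true$-set at height $n$ is $\{(s,n):s_{>n}=t^{(n)}\}$ for a single right tail $t^{(n)}$. I would extract two flip lemmas, each by chasing the two clauses of $\Pi_\leftarrow$ around a single tetrahedron: (a) if $(s,n)$ is $\true$ then $(s+\delta_{n-1/2},n)$ is $\true$ --- use the translate of $\{1,ab^{-1},a,b\}$ by $(s,n)a^{-1}$, where the second clause gives one $\true$ bottom vertex and the first then forces both top vertices $\true$ --- and, iterating at lower levels, any lamp strictly left of the head may be toggled, so the $\true$-set at height $n$ is $\{(s,n):s_{>n}\in R_n\}$ for a nonempty set $R_n$ of tails; (b) if $(s,n)$ is $\true$ then $(s+\delta_{n+1/2},n)$ is $\false$ --- use the translate by $(s,n)$ itself, where the first clause makes both top vertices $\true$ and the second then forces the bottom vertices to differ. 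Now $|R_n|=1$: for distinct $u,v\in R_n$ let $P$ be the rightmost position at which they disagree and pass up to height $P-\tfrac12$, where the two tails agree except in their first symbol; by (a) one gets $\true$-nodes $(\sigma,P-\tfrac12)$ and $(\sigma+\delta_P,P-\tfrac12)$, contradicting (b). Writing $R_n=\{t^{(n)}\}$, the unique $\true$ down‑neighbour shows $t^{(n-1)}$ is $t^{(n)}$ with one symbol prepended, so the $t^{(n)}$ assemble into a single $t\in(\Z/2)^{\ZO}$ with $t_{>0}$ finitely supported and $t_{>n}=t^{(n)}$; then $\eta$ and $\eta(t)$ have equal $\true$-sets, so $\eta=\eta(t)$. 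As $\false^L=\eta(t)$ for any $t$ with $t_{>0}$ infinitely supported, $t\mapsto\eta(t)$ is onto, and the quotient presentation follows.

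For almost minimality: $\false^L\in\Omega_\leftarrow$ is $L$-fixed, and it is the only fixed point, since a fixed configuration is constant and $\true^L\notin\Omega_\leftarrow$ (the all‑$\true$ tetrahedron fails the second clause). Take any $\eta(t)\neq\false^L$, so $t_{>0}$ is finitely supported; the $L$-orbit of $t$ in $(\Z/2)^{\ZO}$ consists of the configurations agreeing with some translate of $t$ off a finite set. Given a target $\eta'\in\Omega_\leftarrow$ and a finite window $W\subseteq L$, put $N=\min\{n:(s,n)\in W\}$; the restriction of $\eta(\cdot)$ to $W$ depends only on coordinates at positions $>N$, so splicing $t$ on positions $\le N$ with $t'$ on positions $>N$ when $\eta'=\eta(t')$, or with a tail supported at a single far‑right position when $\eta'=\false^L$, produces $t''$ in the orbit of $t$ with $\eta(t'')$ agreeing with $\eta'$ on $W$; by equivariance a point of $L\cdot\eta(t)$ then lies in the corresponding basic neighbourhood of $\eta'$. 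Hence $\overline{L\cdot\eta(t)}=\Omega_\leftarrow$, so $\Omega_\leftarrow$ is almost minimal.

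The only step I expect to require genuine care is the uniqueness $|R_n|=1$: both flip lemmas are immediate, but turning them into a contradiction needs exactly the right reduction --- rising to the rightmost discrepancy so that two $\true$ tails at one height differ in a single symbol. Everything else is bookkeeping with the two clauses of $\Pi_\leftarrow$ and with the global‑description calculus for $L$.
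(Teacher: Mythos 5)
Your proposal is correct, and its overall skeleton (membership of $\eta(t)$ in $\Omega_\leftarrow$ via the tetrahedron computation, equivariance, the collapsing/injectivity dichotomy according to whether $t_{>0}$ is finitely supported, and almost minimality by splicing tails to match a target on a finite window, with a far-away lamp to approach $\false^L$) matches the paper's. Where you genuinely diverge is the surjectivity core. The paper picks one point in the support of $\eta$, follows the \emph{unique descending path of $\true$'s} to define $t$ as a limit, and then verifies $\eta=\eta(t)$ pointwise; its negative direction ($s_{>n}\neq t_{>n}\Rightarrow\neg\eta_{(s,n)}$) rests on an induction showing that from any $\true$ node there is a unique descending $\true$-path of each length. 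You instead characterize the $\true$-set level by level as a single tail-cylinder, via two one-tetrahedron flip lemmas: (a), closure under toggling lamps left of the head, and (b), falsity of the sibling obtained by toggling the lamp just right of the head, applied after ascending to the rightmost disagreement; the coherent per-level tails then assemble into $t$. Both negative arguments are correct consequences of the second clause of $\Pi_\leftarrow$, just packaged differently; your route isolates reusable local facts and gives an explicit description of the $\true$-sets, while the paper's avoids per-level uniqueness altogether and is a bit shorter. One small caveat: your phrase ``iterating at lower levels'' for extending (a) to lamps far left of the head does need the descend-then-reascend detour (use the unique $\true$ down-neighbour to drop $k$ levels, apply (a) there, and propagate back up by the first clause), since (a) by itself only toggles the lamp adjacent to the head; this is exactly the mechanism the paper uses in its positive direction, and with it your argument is complete.
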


Note that the mapping $t\mapsto\eta(t)$ is not continuous for the
Cantor topology on $(\Z/2)^\ZO$, but it \emph{is} continuous when
$(\Z/2)^\ZO$ has the topology with basis
\[ \{y \in (\Z/2)^\ZO  : y_{>n} = x_{>n}\}\text{ for $n\in\Z$ and $x$ ranging over }(\Z/2)^\ZO.
\]
We may thus compute $\eta(t)$ as $\lim\eta(t_i)$ as long as the $t_i$ agree with $t$ on ever larger right-infinite intervals.

\begin{proof}
  We first show that indeed $\eta(t)$ belongs to $\Omega_\leftarrow$
  for every $t\in (\Z/2)^\ZO$. If $\eta_g$ then $g = (s,n)$ with
  $s_{>n} = t_{>n}$. There is then a unique $h \in \{a^{-1}, b^{-1}\}$
  such that $\eta_{g h}$, namely $h = a^{-1}$ if
  $s_{n-1/2} = t_{n-1/2}$ and $h=b^{-1}$ otherwise. On the other hand,
  $\eta_g$ implies $\eta_{g a}$ and $\eta_{g b}$ because
  $g a = (s,n+1)$ and $g b = (s',n+1)$ with
  $s'_{>n+1} = s_{>n+1} = t_{>n+1}$; so both equations defining
  $\Omega_\leftarrow$ are satisfied.

  We next prove that $t\mapsto\eta(t)$ has image
  $\Omega_\leftarrow$. Consider $\eta\in\Omega_\leftarrow$; if
  $\eta = \false^L$ choose any $t \in (\Z/2)^\ZO$ with infinitely many
  $1$s in its right tail. Otherwise, let $g\in L$ be in the support of
  $\eta$. By the rule `$\gamma\vee\delta\implies\alpha\neq\beta$',
  precisely one of $g a^{-1}$ and $gb^{-1}$ is in the support of
  $\eta$. Following this path from $g$, we get a sequence of symbols
  $w_1,w_2,\dots\in \{a^{-1}, b^{-1}\}$ such that $g w_1\cdots w_n$ is
  in the support of $\eta$ for all $n$. The sequence $g w_1 w_2\cdots$
  converges to a configuration $t\in(\Z/2)^\ZO$. Observe that the head
  only moves to the left, so $t_i=0$ for all $i$ large enough.

  We show that the $t$ we just constructed is indeed a preimage of
  $\eta$. Consider first some $h=(s,n)\in L$ such that
  $s_{>n} = t_{>n}$; we prove $\eta_h$. By definition of $t$ we have
  $\eta_{g w_1\cdots w_m}$ for some $w_i\in\{a^{-1},b^{-1}\}^*$ and
  $m$ arbitrarily large, with $g w_1\cdots w_m=(t',n')$ and
  $t'_{>n'} = t_{>n'}$; so $t'_{>n} = t_{>n}$ as soon as $n'\le n$. By
  the rule `$\alpha\vee\beta\implies\gamma\wedge\delta$' we have
  $\eta_{g w_1\cdots w_m u}$ for all $u \in \{a,b\}^*$. If furthermore
  $m$ is large enough that $n'$ is smaller than all elements in the
  support of $s$, then $h\in g w_1\cdots w_m\{a, b\}^*$ implying
  $\eta_h$.

  Consider next $h=(s,n)\in L$ such that $s_{>n} \neq t_{>n}$, so
  $s_i \neq t_i$ for some $i > n$. Define $s'\in(\Z/2)^\ZO$ by
  $s'_{<n}\coloneqq s_{<n}$ and $s'_{>n}\coloneqq t_{>n}$; so
  $s'_i\neq t_i$ as before. By the rule
  `$\alpha\vee\beta\implies \gamma\wedge\delta$' we have
  $\eta_{c(s',m)}$ for all $m \geq n$. Furthermore,
  $h\in(s',m)\{a^{-1}, b^{-1}\}^{m-n}$ for all $m$ large enough. Now
  by induction, using the rule
  `$\gamma\vee\delta\implies\alpha\neq\beta$', whenever $\eta_f$ there
  is for all $m \geq 0$ a unique $w\in\{a^{-1}, b^{-1}\}^m$ such that
  $\eta_{f w}$. Thus we cannot have simultaneously $\eta_{(s'n)}$ and
  $\eta_{(s,n)}$, so $\neg\eta_h$.

  Consider $t\in(\Z/2)^\ZO$. If $t_{>0}$ is infinitely supported, then
  $\eta(t)=\false^L$ since $t_{>n}$ can never agree with $s_{>n}$ for
  $(s,n)\in L$. Assume then that $t_{>0}$ is finitely
  supported. Defining $s\in(\Z/2)^\ZO$ by $s_{<0}=0$ and
  $s_{>0}=t_{>0}$ we get $\eta(t)_{s,0}$ so
  $\eta(t)\neq\false^L$. Consider next $t'\neq t$ such that $t'_{>0}$
  is also finitely supported, and let $n$ be such that
  $t'_{n+1/2}\neq t_{n+1/2}$. Define $(s,n)\in L$ by $s_{<n}=0$ and
  $s_{>n}=t_{>n}$; then $\eta(t)\neq\eta(t')$ because
  \[\eta(t)_{(s,n)}\implies s_{>n}=t_{>n}\implies s_{>n}\neq t'_{>n} \implies \neg\eta(t')_{(s,n)}.\]

The $L$-equivariance of $\eta$ is easily checked; it amounts to
checking $a\eta(t) = \eta(t')$ with $t'(i)=t(i-1)$ and
$a b^{-1}\eta(t) = \eta(t'')$ with $t''_i = t_i$ for $i \neq 1/2$ and
$t''_{1/2} = 1 - t_{1/2}$.

We finally prove that $\Omega_\leftarrow$ is almost minimal. If
$\eta(t), \eta(u)$ are different from $\false^L$, then for every
$m \in \N$ we can find $g \in L$ such that $g\eta(t) = \eta(t')$ with
$t'_{>-m}=u_{>-m}$. In this manner we can make an arbitrarily large
central portion of $\eta(t')$ equal to that of $\eta(u)$, so
$L\eta(t)$ approaches $\eta(u)$ arbitrarily closely. The fixed point
$0=\false^L$ is approached as a limit of $\eta(t)$ with $t_{>0}$
having support of size $\to\infty$.
\end{proof}

We symmetrically define the SFT $\Omega_\rightarrow$ by switching
the roles of left and right; so $\Omega_\rightarrow$ is also almost
minimal, and we have
\[\Omega_\rightarrow=\left\{\eta\in\mathbb B^L: \exists t \in (\Z/2)^\ZO:\;\forall g=(s,n) \in L: \eta_g \iff s_{<n} = t_{<n}\right\}.
\]
We next combine these two SFTs by a product construction:
\[\Omega_\leftrightarrow\subset\Omega_\leftarrow\times\Omega_\rightarrow\]
consists in configurations $\eta$ such that, writing
$(\eta_g,\eta_{g a b^{-1}},\eta_{g a},\eta_{g
  b})=(\alpha,\beta,\gamma,\delta)$, we have
$\alpha=(\true,\true)\iff\gamma=(\true,\true)$.

\begin{lem}\label{lem:aUnique}
  The SFT $\Omega_\leftrightarrow$ is the orbit closure of
  $\eta \in(\mathbb B\times\mathbb B)^L$ defined by
  \[\eta_{(s,n)} = (s_{>n}\equiv 0^{>n}, s_{<n}\equiv 0^{<n}).\]
  The configuration $\eta$ is the only configuration in
  $\Omega_\leftrightarrow$ satisfying $\eta_1 = (\true,\true)$.
\end{lem}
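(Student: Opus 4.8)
The plan is to determine $\Omega_\leftrightarrow$ exactly by combining Lemma~\ref{lem:XLCharacterization} with its left--right mirror for $\Omega_\rightarrow$, and then to match the outcome against the orbit of the displayed configuration. Write $\eta_\rightarrow$ for the mirrored map, so $\Omega_\rightarrow=\{\eta_\rightarrow(t):t\in(\Z/2)^\ZO\}$ with $\eta_\rightarrow(t)_{(s,n)}\iff s_{<n}=t_{<n}$, and $\eta_\rightarrow(t)=\false^L$ precisely when $t_{<0}$ is infinitely supported; as in Lemma~\ref{lem:XLCharacterization}, $\eta(\cdot)$ and $\eta_\rightarrow(\cdot)$ are $L$-equivariant and injective off their respective ``bad'' sets. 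A point of $\Omega_\leftrightarrow$ is a pair $(\eta^-,\eta^+)\in\Omega_\leftarrow\times\Omega_\rightarrow$, and unwinding the tetrahedral condition ``$\alpha=(\true,\true)\iff\gamma=(\true,\true)$'' over every $(g,gab^{-1},ga,gb)$ shows it says exactly that $\operatorname{supp}(\eta^-)\cap\operatorname{supp}(\eta^+)$ is closed under right multiplication by $a$, hence is a union of right cosets $g\langle a\rangle=\{(s,m):m\in\Z\}$.

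First I treat pairs with both coordinates nontrivial. Then $\eta^-=\eta(t)$ and $\eta^+=\eta_\rightarrow(u)$ with $t_{>0}$ and $u_{<0}$ finitely supported (and $t,u$ then unique). For each $n$ the element $s$ with $s_i=t_i$ for $i>n$ and $s_i=u_i$ for $i<n$ is finitely supported (since $t_{>n}$ and $u_{<n}$ are), and $(s,n)$ lies in both supports; so the intersection is nonempty, hence contains a full coset $\{(s,m):m\in\Z\}$. Thus $s_{>m}=t_{>m}$ and $s_{<m}=u_{<m}$ for \emph{all} $m\in\Z$; letting $m\to-\infty$ in the former and $m\to+\infty$ in the latter, and recalling $s$ is finitely supported, forces $s=t=u$, all finitely supported. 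Conversely, for any finitely supported $t$ the pair $(\eta(t),\eta_\rightarrow(t))$ does lie in $\Omega_\leftrightarrow$, since on the tetrahedron at $(s,n)$ both $\alpha$ and $\gamma$ equal $(\true,\true)$ iff $s=t$. If one coordinate is $\false^L$ the tetrahedral condition is vacuous, so $\{\false^L\}\times\Omega_\rightarrow$ and $\Omega_\leftarrow\times\{\false^L\}$ lie entirely in $\Omega_\leftrightarrow$. Altogether
\[\Omega_\leftrightarrow=\{(\eta(t),\eta_\rightarrow(t)):t\text{ finitely supported}\}\ \cup\ \{\false^L\}\times\Omega_\rightarrow\ \cup\ \Omega_\leftarrow\times\{\false^L\}.\]

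Next I match this with the orbit closure of the displayed configuration, which equals $\eta:=(\eta(0^\ZO),\eta_\rightarrow(0^\ZO))$. By equivariance the $L$-orbit of $\eta$ is $\{(\eta(t),\eta_\rightarrow(t)):t\in L\cdot0^\ZO\}$, and under the shift-and-flip action of $L=\Z/2\wr\Z$ on $(\Z/2)^\ZO$ the orbit of $0^\ZO$ is exactly the set of finitely supported configurations; so $L\eta$ is the first piece above. Being itself a tetrahedral SFT, $\Omega_\leftrightarrow$ is closed and $L$-invariant, whence $\overline{L\eta}\subseteq\Omega_\leftrightarrow$. For the reverse inclusion I approximate the degenerate pieces: to reach $(\false^L,\eta_\rightarrow(u))$, take $t_k$ agreeing with $u$ on $[-k,k]$, carrying a run of $k$ ones just to the right of position $k$, and vanishing elsewhere; then $t_k$ is finitely supported, $\eta_\rightarrow(t_k)\to\eta_\rightarrow(u)$ (the left half at a fixed window eventually agrees with that of $u$, using $u_{<0}$ finitely supported, or else $\eta_\rightarrow(u)=\false^L$ is forced), and $\eta(t_k)\to\false^L$ pointwise because the run of ones makes $\operatorname{supp}(t_{k,>n})$ exceed the fixed finite set $\operatorname{supp}(s_{>n})$ once $k$ is large, for every $g=(s,n)$. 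The mirror construction reaches all of $\Omega_\leftarrow\times\{\false^L\}$, and $t_k=1$ on $[-k,k]$ reaches $(\false^L,\false^L)$. Hence $\overline{L\eta}=\Omega_\leftrightarrow$.

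Finally, for uniqueness, suppose $\eta'\in\Omega_\leftrightarrow$ has $\eta'_1=(\true,\true)$; writing $\eta'=(\eta^-,\eta^+)$, both coordinates are true at the identity, so neither is $\false^L$, and the description above gives $\eta'=(\eta(t),\eta_\rightarrow(t))$ for some finitely supported $t$. Evaluating at $1=(0^\ZO,0)$, the first coordinate forces $t_{>0}=0^{>0}$ and the second forces $t_{<0}=0^{<0}$, so $t=0^\ZO$ and $\eta'=\eta$. I expect the genuine work to be the inclusion $\Omega_\leftrightarrow\subseteq\overline{L\eta}$: the ``degenerate'' configurations with one coordinate equal to $\false^L$ really are present, and exhibiting each as an orbit limit forces one to use that $t\mapsto\eta(t)$ is continuous only in the coarse one-sided-tail topology, and to keep careful track of which factor is continuous in which side's topology.
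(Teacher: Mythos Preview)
Your proof is correct, and takes a somewhat different route from the paper's. You first determine $\Omega_\leftrightarrow$ explicitly as the three-piece union
\[\{(\eta(t),\eta_\rightarrow(t)):t\text{ finitely supported}\}\ \cup\ \{\false^L\}\times\Omega_\rightarrow\ \cup\ \Omega_\leftarrow\times\{\false^L\},\]
then identify the $L$-orbit of $\eta$ with the first piece and reach the degenerate pieces by explicit approximating sequences $t_k$. The paper instead never writes down this decomposition; it argues case by case that each $\zeta\in\Omega_\leftrightarrow$ lies in $\overline{L\eta}$. In the key case where both projections are nontrivial, rather than parametrising by $t,u$ and showing $t=u$, the paper shows directly that $\zeta_g=(\true,\true)$ for some $g$ via a path-meeting argument (follow the unique $\{a,b\}$-path of $\true$'s upward from one coordinate and the unique $\{a^{-1},b^{-1}\}$-path downward from the other until they coincide), then uses the iff-rule to pin down the full $(\true,\true)$-set and conclude $\zeta$ is a translate of $\eta$. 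For the degenerate pieces the paper invokes the almost-minimality of $\Omega_\leftarrow$ proved in Lemma~\ref{lem:XLCharacterization}, reducing the problem to exhibiting a single point of $\Omega_\leftarrow\times\{\false^L\}$ in the orbit closure, rather than constructing approximants to every point. Your approach is more constructive and makes the global structure of $\Omega_\leftrightarrow$ visible; the paper's is a bit shorter because it recycles almost-minimality.

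One terminological slip: the sets $g\langle a\rangle=\{(s,m):m\in\Z\}$ are \emph{left} cosets of $\langle a\rangle$, not right cosets; the substance of your argument is unaffected.
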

\begin{proof}
  The configuration $\eta$ belongs to $\Omega_\leftrightarrow$: its
  first projection is in $\Omega_\leftarrow$ by
  Lemma~\ref{lem:XLCharacterization} with $t = 0^\ZO$, and
  symmetrically its right projection is in $\Omega_\rightarrow$. On
  the other hand $\eta_g = (\true,\true)$ happens precisely when
  $g=(0^\ZO,n)$ for some $N\in\Z$, so $g\in\langle a\rangle$.

  Consider now an arbitrary configuration
  $\zeta\in\Omega_\leftrightarrow$. Suppose first
  $\zeta = (\false,\false)^L$; then $\zeta$ is in the orbit closure of
  $\eta$, since arbitrarily large $(\false,\false)$-balls are seen
  around elements of the form $(s,n)$ in which $s$ has many $1$s in
  its left and right tails. The set of configurations
  $\zeta \in \Omega_\leftrightarrow$ with second projection $\false^L$
  is precisely $\Omega_\leftarrow \times\{\false^L\}$. To reach these
  configurations in the orbit closure of $\eta$ it suffices to find
  only one of them, by the almost minimality of
  $\Omega_\leftarrow$. Now clearly if $s_{>0} = 0^{>0}$ and
  $s_{-m-1/2} \neq 0$ for some $m \geq 0$, then
  $((s,0)\cdot\eta)_1 = (\true,\false)$ and the second projection of
  $(s,0)\cdot\eta$ tends to $\false^L$ as $m\to\infty$. Thus indeed
  $\Omega_\leftarrow \times \{\false^L\}$ is contained in the orbit
  closure of $\eta$. Similarly, $\{\false^L\}\times\Omega_\rightarrow$
  is contained in the orbit closure of $\eta$.

  Consider then $\zeta \in \Omega_\leftrightarrow$ whose projections
  are both $\neq\false^L$. First, we have $\zeta_g = (\true,\true)$
  for some $g\in L$: indeed, suppose $\zeta_{g_1} = (\true,\false)$
  and $\zeta_{g_2} = (\false,\true)$. Then the rules force
  $\zeta_{g_1 v_1} = (\true,*)$ for all $v_1 \in \{a, b\}^*$ and
  $\zeta_{g_2 v_2}=(*,\true)$ for some $v_2 \in \{a, b\}^m$ and any
  $m \geq 0$; and symmetrically $\zeta_{g_1 v_1 w_1} = (\true,*)$ for
  some $w_1\in\{a^{-1},b^{-1}\}^n$ and all $n\ge0$ while
  $\zeta_{g_2 v_2 w_2}=(*,\true)$ for all
  $w_2\in\{a^{-1},b^{-1}\}^*$. Now every element of $L$, in particular
  $g_1^{-1}g_2$, may be written in the form $v_1 w_1 w_2^{-1}v_2^{-1}$
  with $v_2,w_1$ fixed, as soon as they are long enough (depending on
  the support of $g_1,g_2$); so we may set $g=g_1v_1w_1=g_2v_2w_2$ and
  note $\zeta_g=(\true,\true)$. By replacing $\zeta$ by a translate,
  we may assume $\zeta_1 = (\true,\true)$ and it now enough to prove
  $\zeta = \eta$. By the rule
  `$\alpha = (\true,\true) \iff \gamma = (\true,\true)$', both $\eta$
  and $\zeta$ contain $(\true,\true)$ only on the subgroup
  $\langle a \rangle$. Now any configuration in $\Omega_\leftarrow$
  (respectively $\Omega_\rightarrow$) is determined by a bi-infinite
  $\{a,b\}$-labelled path of $\true$s, by
  Lemma~\ref{lem:XLCharacterization}, so $\zeta = \eta$.

  The last claim holds because $a \cdot \eta = \eta$ and $\eta$
  contains $(\true,\true)$ only on the coset $\langle a \rangle$.
\end{proof}

Generalizing the ``sunny-side-up'' shift, it would be interesting to
understand \emph{which subgroups can be marked by a sofic shift}. By
this we mean the following: consider $G$ a countable group and
$H\le G$ a subgroup; then $G$ acts on the space $G/H=\{g H:g\in G\}$
of left cosets of $H$ by left translation, and this action extends to
the one-point compactification $G/H \cup \{\infty\}$ giving it the
structure of an expansive zero-dimensional topological dynamical
system.  Thus it is abstractly a subshift, which we call the
\emph{$H$-coset subshift}.  When $H$ is the trivial group, the
$H$-coset subshift is the sunny-side-up; it is still not understood
for which groups $G$ it is sofic. Now mapping $(\true,\true)$ to $1$
and everything else to $0$ produces a subshift of $\{0,1\}^L$, for
which we have the
\begin{prop}\label{prop:aSofic}
  The $\langle a \rangle$-coset subshift on the lamplighter group is
  sofic.\qed
\end{prop}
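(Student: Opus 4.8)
The plan is to realize the $\langle a\rangle$-coset subshift directly as a factor of the SFT $\Omega_\leftrightarrow$. First I would consider the block map $\phi\colon(\mathbb B\times\mathbb B)^L\to\{0,1\}^L$ induced cellwise by sending $(\true,\true)\mapsto 1$ and every other symbol to $0$; concretely $\phi(\zeta)_g=1$ iff $\zeta_g=(\true,\true)$. This is continuous and $L$-equivariant, so $\Sigma:=\phi(\Omega_\leftrightarrow)$ is a subshift of $\{0,1\}^L$, sofic by definition as a shift-commuting continuous image of an SFT. The whole content of the proof is then to identify $\Sigma$ with the $\langle a\rangle$-coset subshift.

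For this I would invoke Lemma~\ref{lem:aUnique}, which presents $\Omega_\leftrightarrow$ as the orbit closure $\overline{L\cdot\eta}$ of the single configuration $\eta$ having $\eta_g=(\true,\true)$ precisely on $g\in\langle a\rangle$. Since $\phi$ is continuous and $\Omega_\leftrightarrow$ is compact, $\Sigma=\phi(\overline{L\cdot\eta})=\overline{\phi(L\cdot\eta)}=\overline{L\cdot\phi(\eta)}$, and $\phi(\eta)$ is exactly the indicator function $\mathbf 1_{\langle a\rangle}$. A short pointwise-limit computation identifies the orbit closure of $\mathbf 1_{\langle a\rangle}$: its points are the indicators $\mathbf 1_{C}$ of the cosets $C$ of $\langle a\rangle$ together with the all-$0$ configuration. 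Indeed, if a limit point has a coordinate $g$ in its support, then the approximating cosets eventually equal $g\langle a\rangle$, forcing the limit to be $\mathbf 1_{g\langle a\rangle}$; otherwise the limit is $0^L$, which is genuinely attained because $\langle a\rangle$ has infinite index in $L$, so cosets disjoint from arbitrarily large balls exist. Equipped with the $L$-action, this set is precisely the one-point compactification $L/\langle a\rangle\cup\{\infty\}$, i.e. the $\langle a\rangle$-coset subshift; since soficity is a topological-conjugacy invariant, this finishes the proof.

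I do not expect a genuine obstacle: Lemma~\ref{lem:aUnique} carries the weight, and what remains is the routine identification above. The only delicate point is matching the abstract one-point-compactification description of the coset subshift with the concrete orbit closure of $\mathbf 1_{\langle a\rangle}$, in particular confirming that $0^L$ (the image of $\infty$) is the only limit point outside the coset indicators, which is exactly the infinite-index property. As an alternative to passing through $\overline{L\cdot\eta}$, one could instead note directly that the defining tetrahedron rule $\alpha=(\true,\true)\iff\gamma=(\true,\true)$ propagates the value $(\true,\true)$ along each $\langle a\rangle$-coset, and that the uniqueness clause of Lemma~\ref{lem:aUnique} forbids two distinct cosets from carrying $(\true,\true)$ at once, so that $\phi(\zeta)$ for $\zeta\in\Omega_\leftrightarrow$ is always a single coset indicator or $0^L$; the reverse inclusion is again the orbit-closure computation.
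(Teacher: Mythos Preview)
Your proposal is correct and is precisely the argument the paper intends: the text immediately preceding the proposition says ``mapping $(\true,\true)$ to $1$ and everything else to $0$ produces a subshift of $\{0,1\}^L$'', and the \qed indicates this is to be read as an immediate consequence of Lemma~\ref{lem:aUnique}. You have simply spelled out the orbit-closure identification that the paper leaves implicit.
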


It is straightforward to superpose any sofic $\Z$-shift on the
$\langle a \rangle$-coset, giving also the following corollary:
\begin{prop}\label{prop:SSUSofic}
  The sunny-side-up subshift on the lamplighter group is sofic.\qed
\end{prop}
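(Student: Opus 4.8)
The plan is to realize the sunny-side-up subshift on $L$ as the block-map image of a subshift of finite type by superposing a sofic presentation of the classical sunny-side-up $\Z$-subshift on top of the coset marking of Proposition~\ref{prop:aSofic}. Recall that the sunny-side-up subshift on $L$ is the orbit closure in $\{0,1\}^L$ of the indicator of $1$, i.e.\ the set of configurations with at most one $1$; writing any element $h\in L$ in the form $h=ga^k$ (for $g\langle a\rangle$ the coset through $h$ and $k=0$), such a configuration is either all-zero or is supported on a single coset $g\langle a\rangle$ and equals there, read along $k\mapsto ga^k$, the point of the sunny-side-up $\Z$-subshift with a $1$ at the origin. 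So it suffices to prove the following superposition lemma and apply it to $Z=$ sunny-side-up on $\Z$: if $Z\subseteq\{0,1\}^\Z$ is sofic, then the subshift $\widehat Z\subseteq\{0,1\}^L$ consisting of the all-zero configuration together with all configurations supported on a single coset $g\langle a\rangle$ and equal there (along $k\mapsto ga^k$) to a point of $Z$, is sofic.

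To prove the lemma, take a radius-$1$ SFT cover $Y\subseteq B^\Z$ of $Z$, given by a set $T\subseteq B\times B$ of allowed consecutive pairs and a block map $\rho\colon B\to\{0,1\}$ with $\rho(Y)=Z$ (pass to a higher-block recoding if necessary). Build an SFT $\Sigma$ on $L$ over the alphabet $(\mathbb B\times\mathbb B)\times(B\cup\{\ast\})$ by the following local rules: the first coordinate ranges over $\Omega_\leftrightarrow$; the second coordinate at a vertex $v$ equals $\ast$ exactly when the first-coordinate symbol at $v$ is not $(\true,\true)$; and whenever the first-coordinate symbol at $v$ is $(\true,\true)$ — hence, by Lemma~\ref{lem:aUnique}, so is the first-coordinate symbol at $va$ — the pair of second-coordinate symbols at $(v,va)$ lies in $T$. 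These are finitely many constraints on neighbourhoods of bounded radius in $\gfL$, so $\Sigma$ is an SFT. Let $\pi$ be the (radius-$0$) block map that applies $\rho$ to the second coordinate, with $\rho(\ast):=0$.

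It remains to identify $\pi(\Sigma)$. By Lemma~\ref{lem:aUnique}, a point of $\Omega_\leftrightarrow$ is either a translate of the distinguished configuration $\eta$, whose $(\true,\true)$-cells form exactly one coset $g\langle a\rangle$ (a bi-infinite $a$-line, and every coset occurs this way), or contains no $(\true,\true)$ at all. In the latter case the second coordinate of any point of $\Sigma$ above it is everywhere $\ast$, so $\pi$ outputs the all-zero configuration. In the former case the second coordinate restricted to the marked line is, by construction, an arbitrary point of $Y$ read along $k\mapsto ga^k$ and is $\ast$ off the line, so $\pi$ outputs the corresponding point of $\widehat Z$; conversely every point of $\widehat Z$ with nonzero support is obtained by choosing the translate of $\eta$ marking the relevant coset and a suitable point of $Y$. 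Hence $\pi(\Sigma)=\widehat Z$, which proves the lemma. Finally, the sunny-side-up $\Z$-subshift is sofic: it is $\rho(Y)$ for $Y\subseteq\{\mathsf L,\mathsf M,\mathsf R,\mathsf\emptyset\}^\Z$ the SFT allowing only the consecutive pairs $\mathsf{LL},\mathsf{LM},\mathsf{MR},\mathsf{RR},\mathsf{\emptyset\emptyset}$ and $\rho(\mathsf M)=1$, $\rho(\mathsf L)=\rho(\mathsf R)=\rho(\mathsf\emptyset)=0$; and $\widehat Z$ for this $Z$ is precisely the sunny-side-up on $L$ by the description in the first paragraph. The one step that requires more than bookkeeping is the surjectivity onto $\widehat Z$ — in particular that translates of $\eta$ realise all $\langle a\rangle$-cosets, so the single marked vertex can sit anywhere in $L$, and that a coset determines its translate of $\eta$ uniquely — and this rests entirely on the rigidity furnished by Lemma~\ref{lem:aUnique}; the rest is routine. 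The same argument with an arbitrary sofic $Z$ gives the general superposition statement mentioned before the proposition.
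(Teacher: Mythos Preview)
Your proof is correct and follows exactly the route the paper indicates: the paper dispatches this proposition in one line (``It is straightforward to superpose any sofic $\Z$-shift on the $\langle a\rangle$-coset''), and you have simply carried out that superposition in detail, building the SFT $\Sigma$ over $\Omega_\leftrightarrow$ and reading off the sunny-side-up via a block map. The only blemish is the garbled parenthetical ``(for $g\langle a\rangle$ the coset through $h$ and $k=0$)'', where you presumably meant to fix a coset representative $g$ and let $k$ vary; this does not affect the argument.
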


\begin{figure}
  \centerline{\begin{tikzpicture}[x={(5cm,0cm)},y={(-2cm,-1cm)},z={(0cm,6cm)}]
    \pgfmathsetmacro\h{4}
    \pgfmathsetlengthmacro\rad{3mm*pow(2,-\h)}
    \pgfmathsetmacro\hmo{\h-1}
    \foreach\k in {0,...,\h} {
      \pgfmathsetmacro\z{\k/\h}
      \pgfmathsetmacro\cz{1-\z}
      \pgfmathsetmacro\maxl{pow(2,\k)-1}
      \pgfmathsetmacro\maxm{pow(2,\h-\k)-1}
      \ifnum\k=2
        \foreach\x in {-0.5,-0.166,0.166,0.5} {
          \draw[blue,very thick] (\x,-0.6,\z) -- (\x,0.6,\z);
          \draw[blue,very thick] (-0.6,\x,\z) -- (0.6,\x,\z);
        }
      \fi
      \foreach\m in {0,...,\maxm} {
        \pgfmathsetmacro\y{-1+\m*2/(pow(2,\h-\k)-0.9999)}
        \foreach\l in {0,...,\maxl} {
          \pgfmathsetmacro\x{1-\l*2/(pow(2,\k)-0.9999)}
          \ifnum\k=\h\else
          \foreach\color/\pos in {green/1,red/0} {
            \pgfmathsetmacro\nx{1-(2*\l+(\pos ? mod(\m,2) : 1-mod(\m,2)))*2/(pow(2,\k+1)-1)}
            \pgfmathsetmacro\ny{-1+floor(\m/2)*2/(pow(2,\h-(\k+1))-0.9999)}
            \pgfmathsetmacro\nz{(\k+1)/\h}
            \pgfmathsetmacro\ncz{1-\nz}
            \pgfmathsetmacro\shade{30+20*\y}\def\shadecolor{\color!\shade}
            \pgfmathsetmacro\nshade{30+20*\ny}\def\nshadecolor{\color!\nshade}
            \ifnum\m=\maxm\ifnum\l=\maxl\ifnum\pos=1
            \def\shadecolor{green!80!black}
            \def\nshadecolor{green!80!black}
            \fi\fi\fi
            \shadepath{\shadecolor}{\nshadecolor}{\z*\x,\cz*\y,\z}{\nz*\nx,\ncz*\ny,\nz}
          }
          \fi
          \filldraw[fill=white] (\z*\x,\cz*\y,\z) circle (1.5pt);
        }
      }
    }
  \end{tikzpicture}}
  \caption{A tetrahedron in $\gfL$, with in blue the ``sea level'' grid and in heavy green the marked ray $\langle a\rangle$}\label{fig:sealevel}
\end{figure}
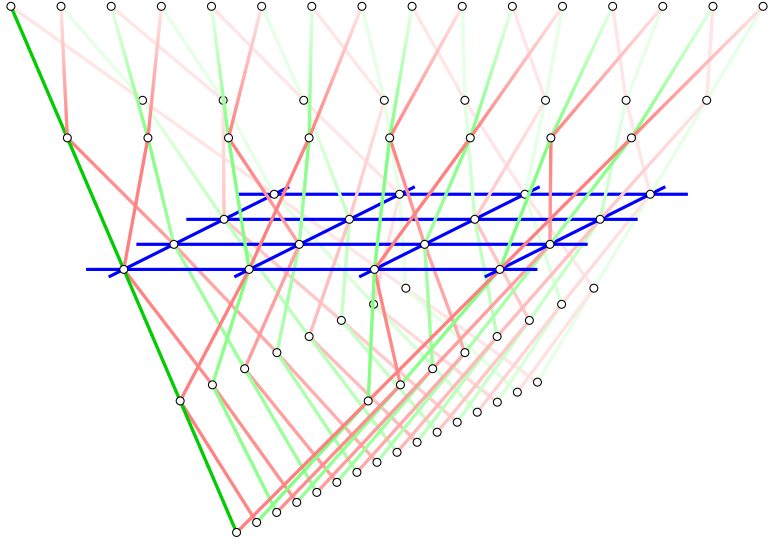

\newcommand{\LLr}[3]{{#1\markin#2\markout#3}}
\newcommand{\LLc}[2]{{#1\markinout#2}}
\newcommand{\LLl}[3]{{#1\markout#2\markin#3}}
\subsection{Unsynchronized binary trees over the sea surface}
We define in this section a ``sea level'' SFT: it will mark one level
of $\gfL$, namely all $(s,n)\in L$ for some fixed $n\in\Z$, by a
``sea level'' symbol $\aquarius$, and  mark by ``above (respectively below) sea level'' symbols all elements $(s,n')$ with $n'>n$ (respectively $n'<n$).
The ``sea level'' appears as a grid in
Figure~\ref{fig:sealevel}.

Additionally, the SFT will mark some binary
trees in the ``above sea level'' portion, that connect columns of the
grid together, as well as binary trees in the ``below sea level''
portion connecting rows of the grid. The SFT will be defined by
allowed tetrahedra; we introduce
\[U = \{\nwarrow, \uparrow, \nearrow\},\qquad D = \{\swarrow,
  \downarrow, \searrow\},\qquad A = U \cup \{\aquarius\} \cup D,\]
define $\phi\colon A \to\{1,0,-1\}$ by $\phi(U)=\{1\}$,
$\phi(\aquarius) = 0$, $\phi(D) = \{-1\}$, and define
\[\Omega_{\aquarius}\subset A^L\]
as those $\eta\in A^L$ such that, for all $g\in L$, the tetrahedron
$(\eta_g,\eta_{g a b^{-1}},\eta_{g a},\eta_{g
  b})=(\alpha,\beta,\gamma,\delta)$ satisfies
\begin{gather}
  \phi(\alpha) = \phi(\beta)\text{ and }\phi(\gamma) = \phi(\delta)\tag{\aquarius.1}\\
  \phi(\gamma) - \phi(\alpha)\in\{0,1\}\tag{\aquarius.2}\\
  \alpha = \aquarius \implies \{\gamma, \delta\} = \{\nwarrow, \nearrow\}\tag{\aquarius.3}\\
  \gamma = \aquarius \implies \{\alpha, \beta\} = \{\swarrow, \searrow\}\tag{\aquarius.4}\\
  \alpha\in U\implies \beta=\alpha\wedge\{\gamma, \delta\} = \{\uparrow,\alpha\}\tag{\aquarius.5}\\
  \gamma\in D\implies \delta=\gamma\wedge\{\alpha, \beta\} = \{\downarrow,\gamma\}\tag{\aquarius.6}
\end{gather}

For a finite word $v$ and an infinite (right or left) word $u$, we
write $v\sqsubset u$ if $v$ is at the extremity (prefix, suffix) of
$u$. We recall our notation `$u\markin v\markout w$' for elements of
the lamplighter group, introduced in~\S\ref{ss:ll}; in particular the
identity is written `$\markinout$', and `$u\markinout v$' corresponds
to elements $(s,0)$. This will be the most convenient notation to
describe the structure of $\Omega_{\aquarius}$.

The following result shows that, in every tiling
respecting~(\aquarius.1--\aquarius.6), and containing the symbol
$\aquarius$, there is a ``sea level'', namely an infinite grid of
$\aquarius$'s; and some binary trees attached above and below it, in
directions specified by some rays $s_{u,\nwarrow}$, $s_{u,\nearrow}$,
$s_{u,\swarrow}$, $s_{u,\searrow}$:
\begin{lem}\label{lem:StarGivesBinaryTrees}
  Consider $\eta \in A^L$ and suppose $\eta_{\LLc{}{}} = \aquarius$. Then
  $\eta \in \Omega_{\aquarius}$ if and only if the following holds:
  \begin{itemize}
  \item $\eta_{\LLc{u}{v}} = \aquarius$ for all $u, v \in (\Z/2)^*$;
  \item for all $u \in (\Z/2)^*$ there exist $s_{u,\nwarrow}$ and
    $s_{u,\nearrow}$ in $(\Z/2)^\N$ with
    $(s_{u,\nwarrow})_0 \neq (s_{u,\nearrow})_0$, and $s_{u,\swarrow}$
    and $s_{u,\searrow}$ in $(\Z/2)^{-\N}$ with
    $(s_{u,\swarrow})_0 \neq (s_{u,\searrow})_0$, such that
    \begin{align*}
      \eta_{\LLr{u}{v}{w}} = {\nwarrow} &\text{ if $|v| > 0$ and }v\sqsubset s_{u,\nwarrow},\\
      \eta_{\LLr{u}{v}{w}} = {\nearrow} &\text{ if $|v| > 0$ and }v\sqsubset s_{u,\nearrow},\\
      \eta_{\LLr{u}{v}{w}} = {\uparrow} &\text{ if $|v| > 0$ and the previous two cases do not apply,}\\
      \eta_{\LLl{w}{v}{u}} = {\swarrow} &\text{ if $|v| > 0$ and }v\sqsubset s_{u,\swarrow},\\
      \eta_{\LLl{w}{v}{u}} = {\searrow} &\text{ if $|v| > 0$ and }v\sqsubset s_{u,\searrow},\\
      \eta_{\LLl{w}{v}{u}} = {\downarrow} &\text{ if $|v| > 0$ and the previous two cases do not apply.}
    \end{align*}
  \end{itemize}
  A configuration $\eta$ with $\eta_{\LLc{}{}} = \aquarius$ is fully
  determined by the collection, for all $u\in(\Z/2)^*$, of the words
  $s_{u,\nwarrow},s_{u,\nearrow} \in (\Z/2)^\N$ and
  $s_{u,\swarrow},s_{u,\searrow} \in (\Z/2)^{-\N}$.
\end{lem}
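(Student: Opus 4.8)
The plan is to prove the two implications of the ``if and only if'' separately.

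For the ``if'' direction I would take an $\eta$ of the stated explicit form and simply verify that every height‑$1$ tetrahedron $(\eta_g,\eta_{gab^{-1}},\eta_{ga},\eta_{gb})$ satisfies (\aquarius.1)--(\aquarius.6). Since $ab^{-1}$ fixes the head while $a,b$ move it by one step, each tetrahedron straddles two consecutive levels, and the check reduces to a short list of cases: both levels at the sea surface is impossible; one level at the surface and one just off it is governed by (\aquarius.3)/(\aquarius.4), which is exactly the branching of the two rays $s_{u,\nwarrow},s_{u,\nearrow}$ (resp. $s_{u,\swarrow},s_{u,\searrow}$) out of a cell $\LLc uv$; and two levels strictly off the surface are governed by (\aquarius.5)/(\aquarius.6), which say that a ray‑ or bulk‑symbol has exactly one child continuing it while its sibling carries $\uparrow$ (resp. $\downarrow$). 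Conditions (\aquarius.1)--(\aquarius.2) are just the bookkeeping of $\phi$ and are automatic.

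For the ``only if'' direction, assume $\eta\in\Omega_{\aquarius}$ with $\eta_{\LLc{}{}}=\aquarius$. The heart of the matter is to show that the whole level $0$ --- the set of elements $\LLc uv$, i.e.\ the subgroup of $L$ keeping the head at the origin --- carries the symbol $\aquarius$. Granting this, (\aquarius.3) applied at each $\LLc uv$ produces two children with symbols $\nwarrow$ and $\nearrow$; iterating (\aquarius.5) forces, above the cells sharing a fixed left part $u$, a binary tree all of whose vertices carry $\uparrow$ except along two geodesic rays carrying $\nwarrow$ and $\nearrow$, the branching giving $(s_{u,\nwarrow})_0\ne(s_{u,\nearrow})_0$; and the independence of $\eta_{\LLr uvw}$ from $w$ is exactly the observation that the upward cones above two level‑$0$ cells with the same left part $u$ coincide. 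The downward trees and $s_{u,\swarrow},s_{u,\searrow}$ come out symmetrically from (\aquarius.4) and (\aquarius.6), and the ``fully determined'' clause is then immediate, since reading off the four ray‑words for all $u$ rebuilds $\eta$ cell by cell.

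It remains to establish the level‑$0$ claim, which I expect to be the main obstacle. First, (\aquarius.1) makes $\phi\circ\eta$ constant on $\langle ab^{-1}\rangle$‑cosets and equal on the two children of any cell, while (\aquarius.2) makes it non‑decreasing along $a$ and along $b$; hence the upward cone $\{\LLc{}{}\,w:w\in\{a,b\}^{+}\}$ of the origin lies in $\eta^{-1}(\{\nwarrow,\uparrow,\nearrow\})$ and its downward cone in $\eta^{-1}(\{\swarrow,\downarrow,\searrow\})$, and by (\aquarius.3)--(\aquarius.6) these cones already carry the full tree pattern, in particular a $\nwarrow$‑ and a $\nearrow$‑ray above and a $\swarrow$‑ and a $\searrow$‑ray below. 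Now fix a level‑$0$ cell $g$ with left part $u$ and right part $w$: for all sufficiently large heights the upward cone above $g$ depends only on $u$ --- it coincides with the upward cone above the ``left‑only'' cell with left part $u$, and with that of the origin when $u$ is trivial --- and dually its downward cone depends only on $w$. One first disposes of the left‑only and right‑only cells: a left‑only cell has a downward cone eventually coinciding with the origin's, hence containing both downward rays, whereas a cell whose symbol lies in $\{\nwarrow,\uparrow,\nearrow\}$ or $\{\swarrow,\downarrow,\searrow\}$ spawns, by (\aquarius.5)--(\aquarius.6), a cone containing at most one of the two ray‑symbols on the corresponding side; tracking the $\phi$‑plateaus along the vertical geodesic through the cell then rules out the opposite side as well, leaving only $\aquarius$; right‑only cells are symmetric. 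Finally, for general $g$: its upward cone agrees (at large height) with that of the now‑known‑$\aquarius$ left‑only cell with left part $u$, which carries both a $\nwarrow$‑ and a $\nearrow$‑ray, so $\eta_g\notin\{\nwarrow,\uparrow,\nearrow\}$; and its downward cone agrees with that of the $\aquarius$ right‑only cell with right part $w$, so $\eta_g\notin\{\swarrow,\downarrow,\searrow\}$; hence $\eta_g=\aquarius$. The one genuinely delicate step, where I expect to spend the most care, is the ``$\phi$‑plateau'' argument, namely excluding an off‑surface $\aquarius$‑cell forced along a vertical geodesic inside a cone whose ray pattern has already been pinned down.
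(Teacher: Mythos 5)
Your architecture is genuinely different from the paper's, and it has a real gap at exactly the place you flag: the exclusion of $U$-symbols at the left-only cells $\LLc u{}$ (symmetrically, of $D$-symbols at right-only cells) is not a routine ``$\phi$-plateau'' check but the crux, and nothing in your sketch supplies it. If $\eta_{\LLc u{}}\in U$, then any downward path of length $|u|$ ends in the origin's downward cone, whose cells you know lie in $D$; by (\aquarius.2) the value of $\phi$ drops by at most one per step, so some cell $\LLl{u'}{v}{}$ at depth $0<d<|u|$ carries $\aquarius$. This off-surface $\aquarius$ is \emph{not} excluded by monotonicity of $\phi$ or by anything you have established about the origin's cones: its existence is locally consistent, and ruling it out needs a new argument. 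One option is to follow its two forced downward rays: once they reach cells of the form $\LLl{}{y}{}$ they must coincide with the origin's unique $\swarrow$- and $\searrow$-cells at that depth, which forces the frozen middle segment $v$ of the off-surface cell to equal the last $d$ bits of \emph{both} of the origin's ray words, and these differ in their first bit --- contradiction. The other option is the paper's rigidity argument (see below). As written, ``leaving only $\aquarius$'' is unproven.

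A second, related gap is your wrap-up claim that ``the independence of $\eta_{\LLr uvw}$ from $w$ is exactly the observation that the upward cones above two level-$0$ cells with the same left part coincide''. They coincide only from height $\max(|w|,|w'|)$ onwards; at lower heights the two cones are disjoint sets of cells, so a priori the trees above $\LLc u w$ and $\LLc u{w'}$ could branch differently near the surface. To transfer the eventual agreement down to the cells themselves you need the observation that, for $\alpha\in U$, rule (\aquarius.5) makes $\eta_g$ a function of the pair $\{\eta_{ga},\eta_{gb}\}$ (a $\nwarrow$ above forces $\nwarrow$, a $\nearrow$ forces $\nearrow$, two $\uparrow$'s force $\uparrow$), hence a function of the symbols on the cone at any fixed higher level. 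This ``determined by what lies above'' rigidity is precisely the engine of the paper's proof: there one first establishes the pattern on the slab $\{\LLr uv{}\}$ above a known $\aquarius$ cell $\LLc u{}$, then uses this counting argument (after excluding $\aquarius$ and $D$ from the wedge) to fix all $\eta_{\LLr uvw}$ and to force $\aquarius$ at every $\LLc uw$, then argues symmetrically downwards and chains; the exclusions you are struggling with never arise in that order of deduction. Your route looks completable, but both missing steps require exactly this rigidity (or the ray-merging trick), neither of which appears in the proposal.
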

\begin{proof}
  It is straightforward to verify that a configuration $\eta$
  satisfying the above for some choices
  $s_{u,\nwarrow},s_{u,\nearrow} \in (\Z/2)^\N$ and
  $s_{u,\swarrow},s_{u,\searrow} \in (\Z/2)^{-\N}$ belongs to
  $\Omega_{\aquarius}$.

  Suppose now $\eta_{\LLc{u}{}} = \aquarius$ for some
  $u \in (\Z/2)^*$. Rules~(\aquarius.3) and~(\aquarius.5) imply, by
  induction on $n$, that for some choice $c \in\Z/2$ the pattern
  $\eta\restriction_{\LLr{u}{c(\Z/2)^n}{}}$ contains exactly one
  $\nwarrow$ and otherwise only $\uparrow$'s, and
  $\eta\restriction_{\LLr{u}{(1-c)(\Z/2)^n}{}}$ contains exactly one
  $\nearrow$ and otherwise only $\uparrow$'s.

  We then show by induction on $k$ that
  $\eta\restriction_{\LLr{u}{c(\Z/2)^{n-k}}{w}}$ also contains exactly
  one $\nwarrow$ and otherwise only $\uparrow$ for all
  $w \in (\Z/2)^k$. First, observe that $\aquarius$ cannot appear in
  this set, as it would imply a $\nearrow$ in
  $\eta\restriction_{\LLr{u}{c(\Z/2)^n}{}}$. Thus, by~(\aquarius.2),
  no elements of $D$ can appear either. But as long as all symbols in
  $\eta\restriction_{\LLr{u}{c(\Z/2)^{n-k}}{w}}$ are in $U$, it is
  clear from~(\aquarius.5) that in fact the symbol at
  $\eta_{\LLr{u}{c v}{w}}$ for $v \in (\Z/2)^{n-k}$ is uniquely
  determined by the counts of symbols $\nwarrow, \uparrow, \nearrow$
  in
  $\eta\restriction_{\LLr{u}{c v}{w} \cdot \{a, b\}^k} =
  \eta\restriction_{\LLr{u}{c v(\Z/2)^k}{}}$. We conclude that the
  symbol $\eta_{\LLr{u}{c v}{w}}$ is independent of $w$, and a
  symmetric claim holds for $\eta\restriction_{\LLr{u}{(1-c)v}{w}}$.

  As we observed two paragraphs above, $\eta_{\LLr{u}{c(\Z/2)^n}{}}$
  contains for all $n\in\N$ exactly one $\nwarrow$ and otherwise only
  $\uparrow$'s. More precisely, (\aquarius.5) implies that there is a
  unique path $s_{u,\nwarrow}$ such that
  $\eta_{\LLr{u}{c v}{}} = {\nwarrow}$ precisely when
  $v\sqsubset s_{u,\nwarrow}$. The previous paragraph then shows for
  all $w\in(\Z/2)^*$ that $\eta_{\LLr{u}{cv}{w}} = {\nwarrow}$ holds
  precisely when $v\sqsubset s_{u,\nwarrow}$, with
  $\eta_{\LLr{u}{cv}{w}} = {\uparrow}$ otherwise. Symmetrically, there
  is a unique path $s_{u,\nearrow}$ such that
  $\eta_{\LLr{u}{(1-c)v}{w}} = {\nearrow}$ if and only if
  $v \sqsubset s_{u,\nearrow}$, with
  $\eta_{\LLr{u}{cv}{w}} = {\uparrow}$ otherwise. Clearly
  only~(\aquarius.3) can apply at $\LLc{u}{w}$, and we get
  $\eta_{\LLc{u}{w}} = \aquarius$ for all $w \in (\Z/2)^*$.

  The rules for $\swarrow, \downarrow, \searrow$ are symmetric to
  those for $\nwarrow, \uparrow, \nearrow$, so if
  $\eta_{\LLc{}{u}} = \aquarius$ then there exist
  $s_{u,\swarrow}, s_{u,\searrow}$ differing at $0$ such that
  $\eta_{\LLl{w}{v}{u}} = \searrow$ if and only if
  $v \sqsubset s_{u,\searrow}$ and $\eta_{\LLl{w}{v}{u}} = {\swarrow}$
  if and only if $v \sqsubset s_{u,\swarrow}$, with
  $\eta_{\LLl{w}{v}{u}} = {\downarrow}$ for all other non-empty $v$,
  and thus $\eta_{\LLl{w}{}{u}} = \aquarius$ for all $w \in (\Z/2)^*$.

  Now suppose $\eta_{\LLc{}{}} = \aquarius$. From the above we obtain
  $\eta_{\LLc{u}{v}} = \aquarius$ for all $u,v \in (\Z/2)^*$; and the
  previous analysis applied to $\eta_{\LLc{u}{}} = \aquarius$ and
  $\eta_{\LLc{}{v}} = \aquarius$ proves that $\eta$ has the claimed
  form.

  Finally, the claim that $\eta$ is fully determined by
  $s_{u,\nwarrow}, s_{u,\nearrow} \in (\Z/2)^\N$ and
  $s_{u,\swarrow}, s_{u,\searrow} \in (\Z/2)^{-\N}$ directly follows
  from the given construction of $\eta$'s values.
\end{proof}

Just as for Lemma~\ref{lem:aUnique}, the previous lemma can be stated
in terms of coset subshifts. Indeed, mapping $\aquarius$ to $1$ and
everything else to $0$, we obtain the following
\begin{prop}\label{prop:sealevelSofic}
  Let $\phi\colon L \to \Z$ be the homomorphism given by
  $\phi(a) = \phi(b) = 1$.  Then the $\ker \phi$-coset subshift on the
  lamplighter group is sofic.\qed
\end{prop}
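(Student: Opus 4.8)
The plan is to realise the $\ker\phi$-coset subshift as a one-block factor of the SFT $\Omega_{\aquarius}$ constructed above, so that soficity is immediate (a sofic shift being precisely a shift-commuting continuous image of an SFT). First I would spell out what the target looks like concretely. Since $\phi$ is a homomorphism, $\ker\phi$ is a normal subgroup and $L/\ker\phi\cong\Z$, so the cosets of $\ker\phi$ are exactly the level sets $\phi^{-1}(n)=\{(s,n):s\in(\Z/2)^\ZO\}$ for $n\in\Z$; left-translating the indicator of $\ker\phi$ by $g\in L$ yields the indicator $\mathbf 1_{\phi^{-1}(\phi(g))}$, and these escape every finite ball as $|\phi(g)|\to\infty$. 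Hence, as a subshift of $\{0,1\}^L$, the coset subshift is $\Sigma:=\{\mathbf 1_{\phi^{-1}(n)}:n\in\Z\}\cup\{0^L\}$ (the base point of $L/\ker\phi$ being isolated, this is expansive, hence genuinely a subshift, and the closure point $0^L$ must be added).

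Next I would introduce the letter map $c\colon A\to\{0,1\}$ with $c(\aquarius)=1$ and $c(x)=0$ for $x\in U\cup D$, with induced $L$-equivariant continuous map $\widehat c\colon A^L\to\{0,1\}^L$, $\widehat c(\eta)_g=c(\eta_g)$, and prove $\widehat c(\Omega_{\aquarius})=\Sigma$. The inclusion $\widehat c(\Omega_{\aquarius})\subseteq\Sigma$ is where Lemma~\ref{lem:StarGivesBinaryTrees} does all the work: given $\eta\in\Omega_{\aquarius}$, either $\eta$ never takes the value $\aquarius$, in which case $\widehat c(\eta)=0^L\in\Sigma$, or one picks $g$ with $\eta_g=\aquarius$; by shift-invariance of $\Omega_{\aquarius}$ the configuration $g^{-1}\cdot\eta$ lies in $\Omega_{\aquarius}$ with value $\aquarius$ at the identity $\LLc{}{}$, so the lemma applies and shows that $g^{-1}\cdot\eta$ equals $\aquarius$ exactly on the level-$0$ elements $\LLc{u}{v}$, $u,v\in(\Z/2)^*$ — its values at strictly positive, resp.\ negative, levels lying in $U$, resp.\ $D$ — i.e.\ exactly on $\ker\phi$. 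Thus $\widehat c(g^{-1}\cdot\eta)=\mathbf 1_{\ker\phi}$ and $\widehat c(\eta)=\mathbf 1_{\phi^{-1}(\phi(g))}\in\Sigma$. For the reverse inclusion, I would feed any admissible choice of rays $s_{u,\nwarrow},s_{u,\nearrow}\in(\Z/2)^\N$, $s_{u,\swarrow},s_{u,\searrow}\in(\Z/2)^{-\N}$ (respecting the prescribed first-coordinate conditions) into Lemma~\ref{lem:StarGivesBinaryTrees} to obtain an element of $\Omega_{\aquarius}$ mapping to $\mathbf 1_{\phi^{-1}(0)}$, then translate to get all $\mathbf 1_{\phi^{-1}(n)}$; the point $0^L$ enters $\widehat c(\Omega_{\aquarius})$ either by compactness (the image is closed and $\mathbf 1_{\phi^{-1}(n)}\to 0^L$) or by exhibiting the uniform configuration $\uparrow^L$, which one checks directly satisfies rules (\aquarius.1)--(\aquarius.6) and maps to $0^L$. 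Since $\Omega_{\aquarius}$ is an SFT and $\widehat c$ a block map, $\Sigma=\widehat c(\Omega_{\aquarius})$ is sofic.

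I do not anticipate a real obstacle: the one structural fact needed — that a single occurrence of $\aquarius$ propagates to a full sea level while barring $\aquarius$ from every other level — is exactly Lemma~\ref{lem:StarGivesBinaryTrees} together with rule (\aquarius.2), and everything else is coset bookkeeping plus one compactness remark. The only points deserving care, and hence the mildest ``hard part'', are (i) getting the conventions right so that the abstract coset subshift is identified with the explicit $\Sigma\subseteq\{0,1\}^L$ above, and (ii) checking that the configurations of $\Omega_{\aquarius}$ containing no $\aquarius$ all collapse to $0^L$, so that they contribute nothing beyond the closure point.
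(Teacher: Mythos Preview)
Your proof is correct and is exactly the argument the paper has in mind: the paper states the proposition with a \qed immediately after Lemma~\ref{lem:StarGivesBinaryTrees}, saying only ``mapping $\aquarius$ to $1$ and everything else to $0$, we obtain the following''. You have simply written out the details of that one-block factor map, using the lemma for both containments just as intended.
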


More generally, let $G$ be a group, let $\phi\colon G \to H$ be a
homomorphism, and let $X\subset A^H$ be an $H$-subshift. The
\emph{pullback} of $X$ along $\phi$ is the subshift
\[\phi^*(X) = \{ y \in A^G : \exists x \in X:\; \forall g \in G: y_g = x_{\phi(g)} \}.\]
Rephrasing Proposition~\ref{prop:sealevelSofic} in these terms, we get
the following:
\begin{prop}\label{prop:sealevelSofic2}
  Let $\phi\colon L \to \Z$ be the homomorphism given by
  $\phi(a) = \phi(b) = 1$.  Then the pullback of the sunny-side-up
  subshift from $\Z$ along $\phi$ is sofic.\qed
\end{prop}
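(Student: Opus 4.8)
The plan is to reduce the statement to Proposition~\ref{prop:sealevelSofic} by observing that the pullback $\phi^*(\mathrm{SSU})$ of the sunny-side-up $\mathrm{SSU}$ along $\phi$ is literally the $\ker\phi$-coset subshift of $L$. To see this I would unwind both definitions and match them point by point. The subshift $\mathrm{SSU}\subseteq\{0,1\}^\Z$ is the orbit closure of the configuration $\delta_0$ carrying a single $1$ at the origin, so its points are the $\delta_n$ for $n\in\Z$ together with the constant configuration $0^\Z$; by definition the pullback map $x\mapsto(g\mapsto x_{\phi(g)})$ sends $\delta_n$ to the indicator function of the fibre $\phi^{-1}(n)\subseteq L$ and sends $0^\Z$ to $0^L$. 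On the other side, since $\phi$ is surjective the left cosets of $\ker\phi$ are exactly these fibres, and the embedding of $L/\ker\phi\cup\{\infty\}$ into $\{0,1\}^L$ that defines the coset subshift sends the coset $\phi^{-1}(n)$ to its indicator and $\infty$ to $0^L$. Thus both subshifts have the same ``generating'' configurations, and they agree on the nose once one checks the closures match: the pullback map is continuous with compact domain, so $\phi^*(\mathrm{SSU})$ is closed and equals the closure of $\{\phi^*\delta_n:n\in\Z\}$, which automatically contains $0^L=\lim_{|n|\to\infty}\phi^*\delta_n$, exactly the image of the point at infinity.

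Once this identification is made, the proposition is Proposition~\ref{prop:sealevelSofic}, whose short proof I would recall: the radius-$0$ block map $\pi$ sending $\aquarius$ to $1$ and every other letter to $0$ carries the tetrahedron SFT $\Omega_{\aquarius}$ -- which is an SFT in the sense of Definition~\ref{def:SFT}, hence equivalent to a weakly resolving DHS by Proposition~\ref{prop:sft=vsft} -- onto the coset subshift. Indeed, by Lemma~\ref{lem:StarGivesBinaryTrees} any configuration of $\Omega_{\aquarius}$ containing an $\aquarius$ has its $\aquarius$'s exactly along a single fibre (the ``sea level''), so $\pi$ maps it to some $\phi^*\delta_n$, while configurations without an $\aquarius$ are mapped to $0^L$; compactness makes this image closed, so it is precisely $\phi^*(\mathrm{SSU})$, and a block-map image of an SFT is sofic by definition.

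I do not expect a real obstacle here: the argument is essentially a rephrasing, and the only point requiring a little care -- a minor one -- is the topological bookkeeping in the identification, namely checking that the ``$\exists x\in X$'' form of the pullback yields a closed set and that the constant-$0$ configuration (the image of $\infty$) is correctly accounted for on both sides, which the compactness and continuity observations above supply. It is worth stressing, however, where the real content lies: one cannot deduce Proposition~\ref{prop:sealevelSofic2} from any soft ``pullback preserves soficity'' principle, because the fibres of $\phi$ are totally disconnected in the Cayley graph of $L$, so ``constant on $\phi$-fibres'' is not a local condition; the entire work is done by Lemma~\ref{lem:StarGivesBinaryTrees}, whose ray-and-binary-tree mechanism is exactly what propagates the marking of the sea level across a fibre despite this disconnection.
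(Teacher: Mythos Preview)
Your proposal is correct and follows exactly the paper's approach: the paper states Proposition~\ref{prop:sealevelSofic2} as a direct rephrasing of Proposition~\ref{prop:sealevelSofic} (hence the \qed), and Proposition~\ref{prop:sealevelSofic} is itself obtained from Lemma~\ref{lem:StarGivesBinaryTrees} via the radius-$0$ block map $\aquarius\mapsto1$, everything else $\mapsto0$. Your unwinding of the identification $\phi^*(\mathrm{SSU})=\text{($\ker\phi$-coset subshift)}$ and of the surjectivity argument is precisely what the paper leaves implicit, and your closing remark about why no soft ``pullbacks preserve soficity'' principle applies is a correct and useful observation.
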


We do not know whether the pullback of a full shift on $\Z$ is sofic.

\subsection{Synchronizing the trees}
We now impose some extra conditions on $\Omega_{\aquarius}$ to
synchronize the marked directions, namely to force the binary trees
above and below the sea level to lie in specific directions. This is
done by combining $\Omega_\leftrightarrow$ with $\Omega_{\aquarius}$.
Define thus
\[\Omega\subset\Omega_\leftrightarrow \times \Omega_{\aquarius}\]
as those $\eta\in\Omega_\leftrightarrow \times \Omega_{\aquarius}$
such that, for all $g \in L$, the tetrahedron
$(\eta_g, \eta_{g a b^{-1}}, \eta_{g a}, \eta_{g b}) =
(\alpha,\beta,\gamma,\delta)$ satisfies
\begin{align*}
  \alpha=((\true,*),\aquarius) &\implies \gamma=(*,\nwarrow)\wedge\delta=(*,\nearrow),\\
  \alpha=((\true,*),\nwarrow) &\implies \gamma=(*,\nwarrow),\\
  \alpha=((\true,*),\nearrow) &\implies \delta=(*,\nearrow),\\
  \gamma=((*,\true),\aquarius) &\implies \alpha=(*,\swarrow)\wedge\beta=(*,\searrow),\\
  \gamma=((*,\true),\swarrow) &\implies \alpha=(*,\swarrow),\\
  \gamma=((*,\true),\searrow) &\implies \beta=(*,\searrow).
\end{align*}

This combination of $\Omega_\leftrightarrow$ and $\Omega_{\aquarius}$
vastly reduces the size of the SFT $\Omega$: more precisely,
\begin{lem}\label{lem:omega}
  There is a unique configuration $\eta\in\Omega$ satisfying
  $\eta_{\LLc{}{}} = ((\true,\true), \aquarius)$. The projection to
  $\Omega_\leftrightarrow$ of $\eta$ is the one described by
  Lemma~\ref{lem:aUnique}, and its projection to $\Omega_{\aquarius}$
  is given by the choices $s_{u,\nwarrow}=0^\N$,
  $s_{u,\nearrow}=1^\N$, $s_{u,\swarrow}=0^{-\N}$ and
  $s_{u,\searrow}=1^{-\N}$, namely we have
  \begin{align*}
    \eta_{\LLr uvw} &= \begin{cases}
      (*,\nwarrow) & \text{if }v\in0^+,\\
      (*,\nearrow) & \text{if }v\in1^+,\\
      (*,\uparrow) & \text{in all remaining cases},
    \end{cases}\\
    \eta_{\LLc uv} &= ((\true,\true),\aquarius),\\
    \eta_{\LLl wvu} &= \begin{cases}
      (*,\swarrow) & \text{if }v\in0^+,\\
      (*,\searrow) & \text{if }v\in1^+,\\
      (*,\downarrow) & \text{in all remaining cases}.
    \end{cases}
  \end{align*}
\end{lem}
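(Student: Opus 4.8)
The plan is to combine the two rigidity statements already proved --- Lemma~\ref{lem:aUnique} for the $\Omega_\leftrightarrow$-factor and Lemma~\ref{lem:StarGivesBinaryTrees} for the $\Omega_{\aquarius}$-factor --- and then check that the six synchronizing rules defining $\Omega$ leave no freedom in the only remaining parameters, namely the rays $s_{u,\bullet}$. First I would fix $\eta=(\eta^\leftrightarrow,\eta^{\aquarius})\in\Omega$ with $\eta_{\LLc{}{}}=((\true,\true),\aquarius)$, note that $\LLc{}{}$ is the identity of $L$, and conclude from Lemma~\ref{lem:aUnique} that $\eta^\leftrightarrow$ is the configuration $\eta^\leftrightarrow_{(s,n)}=(s_{>n}\equiv 0^{>n},\,s_{<n}\equiv 0^{<n})$. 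In particular the set on which the first Boolean component is $\true$ is exactly $\{(s,n):s_{>n}\equiv 0\}$, which contains every ``upward stem'' $\LLr uv{}$ (no lamp to the right of the head), and the set on which the second component is $\true$ is exactly $\{(s,n):s_{<n}\equiv 0\}$, which contains every ``downward stem'' $\LLl{}{v}{u}$. On the other factor, $\eta^{\aquarius}\in\Omega_{\aquarius}$ satisfies $\eta^{\aquarius}_{\LLc{}{}}=\aquarius$, so by Lemma~\ref{lem:StarGivesBinaryTrees} it equals $\aquarius$ on all of $\{\LLc uv\}$ and is otherwise completely determined by a quadruple of rays $s_{u,\nwarrow},s_{u,\nearrow}\in(\Z/2)^\N$, $s_{u,\swarrow},s_{u,\searrow}\in(\Z/2)^{-\N}$ for each $u\in(\Z/2)^*$. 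Thus $\eta$ is entirely encoded by this family, and existence and uniqueness reduce to showing that the synchronizing rules force $s_{u,\nwarrow}=0^\N$, $s_{u,\nearrow}=1^\N$, $s_{u,\swarrow}=0^{-\N}$, $s_{u,\searrow}=1^{-\N}$ for every $u$, and conversely are satisfied by that choice.

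The forcing is a short induction along each stem. For the upper stems: for every $u$ the vertex $\LLc u{}$ is at sea level and lies in the first-component-$\true$ set, so $\eta_{\LLc u{}}=((\true,*),\aquarius)$; since $\LLc u{}\cdot a=\LLr u0{}$ and $\LLc u{}\cdot b=\LLr u1{}$, the rule $\alpha=((\true,*),\aquarius)\Rightarrow\gamma=(*,\nwarrow)\wedge\delta=(*,\nearrow)$ yields $(s_{u,\nwarrow})_0=0$ and $(s_{u,\nearrow})_0=1$. Inductively, if $\LLr u{0^k}{}$ lies on the $\nwarrow$-ray then, being again in the first-component-$\true$ set, the rule $\alpha=((\true,*),\nwarrow)\Rightarrow\gamma=(*,\nwarrow)$ together with $\LLr u{0^k}{}\cdot a=\LLr u{0^{k+1}}{}$ pushes the ray one step further, so $s_{u,\nwarrow}=0^\N$; symmetrically $s_{u,\nearrow}=1^\N$ using $\cdot b$ and $\LLr u{1^k}{}\cdot b=\LLr u{1^{k+1}}{}$. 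For the lower stems the mirrored rules apply: $\LLc{}{u}$ lies in the second-component-$\true$ set, so $\gamma=((*,\true),\aquarius)\Rightarrow\alpha=(*,\swarrow)\wedge\beta=(*,\searrow)$ applies at $g=\LLc{}{u}\cdot a^{-1}=\LLl{}{0}{u}$ with $gab^{-1}=\LLl{}{1}{u}$, giving the first symbols, and then $\gamma=((*,\true),\swarrow)\Rightarrow\alpha=(*,\swarrow)$ and $\gamma=((*,\true),\searrow)\Rightarrow\beta=(*,\searrow)$ extend the rays via $\cdot a^{-1}$ (which appends a $0$: $\LLl{}{0^k}{u}\cdot a^{-1}=\LLl{}{0^{k+1}}{u}$) and $\cdot b^{-1}$ (which appends a $1$: $\LLl{}{1^k}{u}\cdot b^{-1}=\LLl{}{1^{k+1}}{u}$), so $s_{u,\swarrow}=0^{-\N}$ and $s_{u,\searrow}=1^{-\N}$. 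This establishes uniqueness.

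For existence I would take the configuration $\eta$ displayed in the statement and verify the six synchronizing rules directly. A rule of the form ``$\alpha=\dots\Rightarrow\dots$'' (respectively ``$\gamma=\dots\Rightarrow\dots$'') has content at a tetrahedron only when its base vertex (respectively its $a$-neighbour) lies in the relevant $\true$-set and carries the relevant $\Omega_{\aquarius}$-symbol; by the explicit description these are precisely the stem vertices $\LLr u{0^k}{}$, $\LLr u{1^k}{}$, $\LLc u{}$, $\LLc{}{u}$, $\LLl{}{0^k}{u}$, $\LLl{}{1^k}{u}$ examined above, where the required conclusions hold by construction, and nowhere does a rule impose a conflicting demand. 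Since $\eta^\leftrightarrow$ satisfies the $\Omega_\leftrightarrow$-constraints by Lemma~\ref{lem:aUnique} and $\eta^{\aquarius}$ the $\Omega_{\aquarius}$-constraints by Lemma~\ref{lem:StarGivesBinaryTrees}, this shows $\eta\in\Omega$, completing the proof.

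The main obstacle is not conceptual but bookkeeping: one must translate each generator multiplication $\cdot a,\cdot b,\cdot a^{-1},\cdot b^{-1}$ into the corresponding edit of the words $u,v,w$ in the $\LLr uvw$ and $\LLl wvu$ notation, and keep track of which of the two Boolean components of $\eta^\leftrightarrow$ is $\true$ at each vertex, so as to be sure that each synchronizing rule fires exactly on the intended stem and never elsewhere. Once the dictionary between multiplication in $L$ and word edits is set up, as in~\S\ref{ss:ll}, each individual verification is immediate.
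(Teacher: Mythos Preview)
Your proposal is correct and follows essentially the same strategy as the paper: invoke Lemma~\ref{lem:aUnique} to pin down the $\Omega_\leftrightarrow$-component, invoke Lemma~\ref{lem:StarGivesBinaryTrees} to reduce the $\Omega_{\aquarius}$-component to the choice of rays $s_{u,\bullet}$, and then use the six synchronizing rules along the stems $\LLr u{0^k}{}$, $\LLr u{1^k}{}$, $\LLl{}{0^k}{u}$, $\LLl{}{1^k}{u}$ to force those rays. The only organisational difference is that you establish uniqueness by a direct induction (the rule at $\LLr u{0^k}{}$ pushes the $\nwarrow$ one step further, hence $s_{u,\nwarrow}=0^\N$ via Lemma~\ref{lem:StarGivesBinaryTrees}), whereas the paper argues by contradiction (if $s_{u,\nwarrow}$ first leaves $0^\N$ at step $k$, the synchronizing rule still forces a $\nwarrow$ at $\LLr u{0^{k+1}}{}$, giving two $\nwarrow$'s among $\gamma,\delta$ in violation of~(\aquarius.3) or~(\aquarius.5)); the two arguments are interchangeable.
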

\begin{proof}
  We first show that the given $\eta$ belongs to $\Omega$. Our choices
  $s_{u,\nwarrow} = 0^\N, s_{u,\nearrow} = 1^\N, s_{u,\swarrow} =
  0^{-\N}, s_{u,\searrow} = 1^{-\N}$ for all applicable $u$ show that
  the second projection of $\eta$ satisfies the characterization of
  Lemma~\ref{lem:StarGivesBinaryTrees}. With
  $(\eta_g, \eta_{g a b^{-1}}, \eta_{g a}, \eta_{g b}) =
  (\alpha,\beta,\gamma,\delta)$, we have $\alpha=((\true,*),*)$ if and
  only if $g = \LLr{u}{v}{}$ or $g = \LLl{u}{0^n}{}$ for some $u, v, n$,
  since the first projection of $\eta$ is the configuration described
  by Lemma~\ref{lem:aUnique}.

  We now show that the rules joining $\Omega_\leftrightarrow$ and
  $\Omega_{\aquarius}$ are satisfied by $\eta$. If
  $\alpha=(*,\aquarius)$, then $g = \LLc{u}{v}$ for some $u, v$; force
  $|v| > 0$ by appending $0$ if needed. If furthermore
  $\alpha=((\true,*),\aquarius)$, then by Lemma~\ref{lem:aUnique} we
  may write $v=0v'$ since $v = 0^{|v|}$, and then indeed we have a
  `$\nwarrow$' at $(\LLc{u}{v}) \cdot a = \LLr{u}{0}{v'}$ and a
  `$\nearrow$' at $(\LLc{u}{v}) \cdot b = \LLr{u}{1}{v'}$, as
  required. The verifications when $\alpha=(*,\nwarrow)$ or
  $(*,\nearrow)$ are similar: by Lemma~\ref{lem:aUnique} if
  $\alpha=((\true,*),*)$ then the bit after the head in $g$ must be
  $0$, and therefore a `$\nearrow$' (respectively a `$\nwarrow$')
  appear in the correct neighbour. The verifications for $\searrow$
  and $\swarrow$ are symmetric, because $\alpha=((*,\true),*)$ implies
  that the bit to the left of the head is $0$.

  Next, we show that $\eta$ is the unique configuration in $\Omega$
  seeded at $\LLc{}{}$. Let $\zeta \in \Omega$ be any configuration
  with $\zeta_{\LLc{}{}} = ((\true,\true),\aquarius)$. By definition
  of $\Omega$, the first projection of $\zeta$ is the unique
  configuration of $\Omega_\leftrightarrow$ described in
  Lemma~\ref{lem:aUnique}, and its second projection is one of the
  configurations of $\Omega_{\aquarius}$ described by
  Lemma~\ref{lem:StarGivesBinaryTrees}, say by words
  $s_{u,\nwarrow},s_{u,\nearrow} \in (\Z/2)^\N$ and
  $s_{u,\swarrow}, s_{u,\searrow} \in (\Z/2)^{-\N}$. We claim that
  these words must be the ones definining $\eta$, which will imply
  $\zeta = \eta$ because by Lemma~\ref{lem:StarGivesBinaryTrees} these
  words fully determine the configuration.

  Suppose for example $s_{u,\nwarrow} \neq 0^\N$ for some
  $u \in (\Z/2)^*$. This means $\zeta_{\LLr{u}{v}{}} = {\nwarrow}$ for
  some $|v| > 0$ and $v \not\sqsubset 0^{\N}$. Let $v$ be of minimal
  length with this property, so $\zeta_{\LLr{u}{0^k1}{}} = {\nwarrow}$
  for some $k \geq 0$. We then have
  $\zeta_{\LLr{u}{0^k}{}} \in \{\aquarius, \nwarrow\}$, so in its
  first projection $\zeta_{\LLr{u}{0^k}{}} = ((\true,*),*)$; then the
  additional rules of $\Omega$ require
  $\zeta_{\LLr{u}{0^k0}{}} = (*,\nwarrow)$. Now this contradicts the
  defining rules of $\Omega_{\aquarius}$ at $\LLr{u}{0^k}{}$ since
  neither $\aquarius$ nor $\nwarrow$ can have $\nwarrow$ at both its
  $a$- and the $b$-neighbour. Thus $s_{u,\nwarrow} = 0^\N$ is the only
  possibility. The verifications for $s_{u,\nearrow} = 1^\N$,
  $s_{u,\swarrow} = 0^{-\N}$ and $s_{u,\searrow} = 1^{-\N}$ are
  symmetric.
\end{proof}

\subsection{Simulating \boldmath $\N\times\N$ on $\gfL$ marked by $\Omega$}
\label{ss:N2Simu}
Consider the graph $\gfL$ marked by the SFT $\Omega$ defined in the
previous sections; more precisely, imposing the seed constraint that
the origin in $\gfL$ is labelled $((\true,\true),\aquarius)$, we have
by Lemma~\ref{lem:omega} uniquely specified a vertex labelling of
$\gfL$. Its edges retain the Cayley graph labelling by
$\{a^{\pm1},b^{\pm1}\}$. We claim:
\begin{prop}\label{prop:seasimul}
  The graph $\gfL$ labelled by the configuration $\eta\in\Omega$ from
  Lemma~\ref{lem:omega} simulates $\N\times\N$.
\end{prop}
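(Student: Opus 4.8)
The plan is to build an explicit finite $(\gfA,\gfB)$-simulator $\gfS$, where $\gfA$ is the label graph of $\Omega$ (whose vertices are the admissible symbols $((\cdot,\cdot),\cdot)\in(\mathbb B\times\mathbb B)\times A$ together with the Cayley edges $a^{\pm1},b^{\pm1}$) and $\gfB$ is the edge-label graph of $\N\times\N$, i.e.\ one vertex and the four generators $\rightarrow,\leftarrow,\uparrow,\downarrow$, with the appropriate vertex labels $\NESW,\NES,\NEW,\NE$ remembering which axes are present. First I would pin down the bijection between $V(\N^2)$ and the simulated vertices: by Lemma~\ref{lem:omega} the sea level $\{\LLc uv\}$ is exactly the set of vertices labelled $\aquarius$, and I send $(x,y)\in\N^2$ to the element $\LLc uv$ where $u$ is the reverse binary expansion of $x$ and $v$ the binary expansion of $y$ (with a fixed convention for trailing zeros / the empty word, so that $(0,0)\mapsto\LLc{}{}$, the origin; the vertices with $u=\emptyset$ form the axis $\{x=0\}$ and dually for $y$). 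The simulated vertex set is then carved out by setting $\lambda(\aquarius)$ to the single vertex of $\gfB$ and $\lambda(\text{everything else})=\bot$, with the $\N^2$-vertex-labels computed from whether the configuration locally looks like an axis — this is where the rays $s_{u,\nwarrow},s_{u,\swarrow}=0^{\pm\N}$ of Lemma~\ref{lem:omega} do the work.

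Next I would describe, as graph-walking automata (Lemma~\ref{lem:gwa}), the four edge-moves. The move $x\mapsto x\pm1$ at fixed $y$ is ``carry propagation on the left word $u$'': incrementing $x$ means flipping a suffix of $u$; but $u$ is read along the direction away from the head, so concretely, starting at $\LLc uv$, one walks up into the tree marked by the $U$-symbols — the rules (\aquarius.3),(\aquarius.5) say the $a$- and $b$-neighbours above a sea-level vertex are $\nwarrow$ and $\nearrow$, and the synchronization rules of $\Omega$ force $s_{u,\nwarrow}=0^\N$ — so the automaton climbs along the $\nwarrow$-ray (following $0$'s, i.e.\ the generator whose neighbour is $\nwarrow$) until it hits the first $\nearrow$-branch, crosses it once (performing the carry: $0\cdots01\mapsto 1\cdots00$ or its inverse), and descends back along the symmetric ray to land on the unique sea-level vertex whose left word is $u\pm1$. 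The words $v$ (the right word) is untouched because descending through the binary tree over $u$ reconstructs the same $v$ on the way down — this is precisely the content of the ``independent of $w$'' part of Lemma~\ref{lem:StarGivesBinaryTrees}. The move $y\mapsto y\pm1$ is the mirror image, using the below-sea-level trees (the $D$-symbols, rules (\aquarius.4),(\aquarius.6) and $s_{u,\swarrow}=0^{-\N}$, $s_{u,\searrow}=1^{-\N}$), propagating a carry on the right word $v$. One checks that at $x=0$ (respectively $y=0$), i.e.\ $u=\emptyset$, the decrement automaton simply has no run, so the quadrant boundary is produced correctly and the simulated graph is exactly $\N\times\N$, not $\Z\times\N$ or a larger graph.

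The main obstacle — and the bulk of the writing — is the verification that the two families of automata actually implement addition by $\pm1$ and that the intermediate steps never collide or leak: one must argue carefully that climbing the $\nwarrow$-ray, flipping one $\nearrow$-branch, and descending lands on a \emph{sea-level} vertex (not another tree vertex) whose coordinates are $(x\pm1,y)$, using that the carry terminates because $x$ is finite, and that the whole run stays inside the tetrahedron / inside the marked trees so that the $\gfB^*$-labels $(0,e,1),(1,e,1),\dots,(1,e,0)$ of the simulator line up to a coherent path. I also need to check the $\gfB$-labelling of the simulated graph matches $\N^2$'s: every vertex with $u\neq\emptyset,v\neq\emptyset$ should get all four moves, hence label $\NESW$; $u=\emptyset$ kills the left-decrement, giving $\NES$ or $\NEW$ or $\NE$ according to $v$. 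Finally, since $\N^2$ is \'etale (distinct edges at a vertex have distinct labels) the simulated graph $\gfL\rtimes\gfS$ is weakly \'etale automatically, so no simplification subtlety arises; transitivity of simulation (Lemma~\ref{lem:simultrans}) together with Example~\ref{ex:simquadrant} then immediately upgrades this to a simulation of the full plane, as remarked before the statement.
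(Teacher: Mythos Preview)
Your overall strategy matches the paper's: identify the sea level with $\N^2$ via binary expansions, and realize $\pm1$ as carry propagation along the synchronized $\nwarrow$/$\swarrow$ trees using a finite simulator. However, two concrete points in your description are wrong and would derail the verification.

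First, you have the up/down directions crossed with the left/right words. From $\LLc uv$, applying $a$ or $b$ moves the marker \emph{right}, so going \emph{up} (into the $U$-region) reads and modifies the \emph{right} word $v$, not $u$; conversely, going down into the $D$-region modifies $u$. In the paper's convention, $(m,n)\mapsto n_\ell\cdots n_0\markinout m_0\cdots m_k$, so `$\rightarrow$' (increment $m$, the right word) is implemented by the \emph{upward} regular expression $b^*ab^{-1}(a^{-1})^*$, and `$\uparrow$' (increment $n$, the left word) by the downward $(b^{-1})^*a^{-1}ba^*$. Your assignment ``$x\leftrightarrow u$, walk up'' is inconsistent; either swap which coordinate you call $x$, or swap which direction you walk.

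Second, and more substantively, the carry mechanism is not ``climb the $\nwarrow$-ray until you hit a $\nearrow$-branch and cross it''. Above the first level, a $\nwarrow$ vertex has upward neighbours $\{\nwarrow,\uparrow\}$ by rule~(\aquarius.5); you never see $\nearrow$ again. The actual increment on the right word is: follow $b$ while the target is $\nwarrow$ (this happens exactly while the current bit is $1$, and the $b$ flips it to $0$), then one $a$ (current bit is $0$, still lands on $\nwarrow$), then one $b^{-1}$ (flipping that $0$ to $1$), then $(a^{-1})^*$ back to sea level. The whole run stays on $\nwarrow$ vertices; the $\nearrow$ tree plays no role in the `$\rightarrow$' automaton. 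Your ``$0\cdots01\mapsto1\cdots00$'' is also not the right bit pattern for $+1$; in LSB-first it is $1\cdots10\mapsto0\cdots01$.

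Once these are corrected, the remaining checks you list (that the other coordinate is preserved, that the $\leftarrow$/$\downarrow$ automata have no run at the axes so the $\NE,\NES,\NEW$ labels come out right) are exactly what the paper does, and your invocation of Lemma~\ref{lem:StarGivesBinaryTrees} for ``independence of $w$'' is apt.
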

\begin{proof}
  It suffices to produce a simulator. Here it is, in simplified form:
  \begin{equation}\label{eq:seasim}
    \begin{tikzpicture}[scale=1.2,auto,every state/.style={inner sep=0mm},baseline=0]
      \node[state] (sea) at (0,0) {\aquarius};
      \node[state] (up0) at (2,2) {$\nwarrow$};
      \node[state] (up1) at (4,0) {$\nwarrow$};
      \node[state] (up2) at (2,-2) {$\nwarrow$};
      \node[state] (down0) at (-2,-2) {$\swarrow$};
      \node[state] (down1) at (-4,0) {$\swarrow$};
      \node[state] (down2) at (-2,2) {$\swarrow$};
      \path[->] (sea) edge[bend left=15] node[above] {$a/(0,\rightarrow,1)$} (up1)
      (up1) edge[bend left=15] node[below] {$b^{-1}/(1,\rightarrow,0)$} (sea)
      (sea) edge node {$\frac{b}{(0,\rightarrow,1)}$} (up0)
      (up0) edge[loop above] node {$b/(1,\rightarrow,1)$} ()
      (up0) edge node {$\frac{a}{(1,\rightarrow,1)}$} (up1)
      (up1) edge node {$\frac{b^{-1}}{(1,\rightarrow,1)}$} (up2)
      (up2) edge[loop below] node {$a^{-1}/(1,\rightarrow,1)$} ()
      (up2) edge node {$\frac{a^{-1}}{(1,\rightarrow,0)}$} (sea);
      \path[->] (sea) edge[bend left=15] node[below] {$a^{-1}/(0,\uparrow,1)$} (down1)
      (down1) edge[bend left=15] node[above] {$b/(1,\uparrow,0)$} (sea)
      (sea) edge node {$\frac{b^{-1}}{(0,\uparrow,1)}$} (down0)
      (down0) edge[loop below] node {$b^{-1}/(1,\uparrow,1)$} ()
      (down0) edge node {$\frac{a^{-1}}{(1,\uparrow,1)}$} (down1)
      (down1) edge node {$\frac{b}{(1,\uparrow,1)}$} (down2)
      (down2) edge[loop above] node {$a/(1,\uparrow,1)$} ()
      (down2) edge node {$\frac{a}{(1,\uparrow,0)}$} (sea);
    \end{tikzpicture}
  \end{equation}
  The grid $\N^2$ is represented by elements of $\gfL$ at sea level:
  given natural numbers $m,n$, write $m=m_k\ldots m_0$ and
  $n=n_\ell\ldots n_0$ in base $2$; then the grid point $(m,n)\in\N^2$
  is represented by the element
  $n_\ell\ldots n_0\markinout m_0\ldots m_k\in L$.

  The transformation $(m,n)\mapsto(m+1,n)$, corresponding to the
  generator `$\rightarrow$' of $\N^2$, is thus realized by adding $1$
  with carry to the word on the right of the `$\markinout$' mark, and
  similarly $(m,n)\mapsto(m,n+1)$ corresponds to the generator
  `$\uparrow$' and to adding one with carry to the word on the left of
  the `$\markinout$' mark.

  The markings imposed by $\Omega$ on $\gfL$, see
  Lemma~\ref{lem:omega}, force every row of the grid to have a
  distinguished binary tree marked by the symbol `$\nwarrow$' in the
  region above it, and force every column of the grid to have a binary
  tree marked `$\swarrow$' in the region below it. Furthermore, the
  trees are ``synchronized'' by $\Omega_\leftrightarrow$ in such a way
  that the path starting from a vertex $(m,n)$ in the grid and going
  upwards while remaining in the `$\nwarrow$' marked region follows a
  sequence in $\{a,b\}$ reading the binary expansion of $m$, and
  similarly the path going downwards while following `$\swarrow$'
  follows a sequence in $\{a^{-1},b^{-1}\}$ reading the binary
  expansion of $n$.

  The operation of adding one with carry on the word to the right of
  the mark is therefore realized by following the regular expression
  $b^* a b^{-1}(a^{-1})^*$, which is precisely the loop followed on
  the right half of~\eqref{eq:seasim}; and symmetrically the operation
  of adding one to the left of the mark is realized by the regular
  expression $(b^{-1})^*a^{-1}b a^*$, which is the loop followed on
  the left half of~\eqref{eq:seasim}.
  
  To obtain a \emph{bona fide} simulator for $\N^2$, three
  modifications are necessary: firstly, we have only written the
  operations $\uparrow$ and $\rightarrow$; adding reverse edges gives
  the operations $\downarrow$ and $\leftarrow$.

  Secondly, \eqref{eq:seasim} ignores the first two symbols (in
  $\{\true,\false\}$) given to $\gfL$ by $\Omega$; so we should take
  four copies of the diagram, for all possibilities of elements in
  $\{\true,\false\}^2$, and connect them by complete bipartite graphs
  (namely replace every edge $s\to t$ by sixteen edges for all choices
  of $\{\true,\false\}^2$ at source and range).

  Thirdly, \eqref{eq:seasim} does not recognize the vertex markings of
  $\N^2$; so again we should take four copies of the simulator, one
  for each of $\{\NE,\NES,\NEW,\NESW\}$, and connect them
  appropriately: four copies of the `$\uparrow$' circuit on the right
  of~\eqref{eq:seasim} go from $\NE$ to $\NES$, from $\NEW$ to
  $\NESW$, and loop at $\NES$ and $\NESW$, while four copies of the
  `$\rightarrow$' circuit on the left of~\eqref{eq:seasim} go from
  $\NE$ to $\NEW$, from $\NES$ to $\NESW$, and loop at $\NEW$ and
  $\NESW$. This has the effect that the `$\downarrow$' and/or
  `$\leftarrow$' operations are unavailable at $\NES,\NEW,\NE$; and
  indeed the regular expression $a^*ba^{-1}(b^{-1})^*$ does not match
  at $(0,n)$, since it remains stuck on the $a^*$ loop which it reads
  infinitely. See the next section for another remedy.

  It would be tedious to draw the complete simulator, but we have made
  it available in ancillary computer files, in the language
  \textsf{Julia}, using which SFTs on the lamplighter group can be
  explored; see~\S\ref{ss:computer}.
\end{proof}

\subsection{From the quadrant to the plane}
\label{ss:QuadToPlane}
The simulation above implements $\N^2$ inside $\gfL$; negative
coordinates cannot be reached, because the operation $\leftarrow$,
implemented by the regular expression $a^* b a^{-1}(a^{-1})^*$, reads
an infinite string of $a$'s without coming to completion. The cause of
this is that we represented integers in binary, with automata
implementing addition with carry, and in this notation passing from
$0$ to $-1$ causes an infinite sequence of carries.

It is of course possible to simulate $\Z^2$ in $\gfL$ using the fact
that $\N^2$ simulates $\Z^2$ if given suitable markings, see
Example~\ref{ex:simquadrant}. However, a simple change lets us
directly simulate $\Z^2$ in $\gfL$.

It suffices indeed to represent $\Z$ differently than in usual binary:
consider all infinite sequences over $\{0,1\}$ that are confinal to
$(01)^\infty$, namely all sequences $t_0t_1\cdots$ with $t_n=n\bmod 2$
for all $n$ large enough. Then the operations $+1$ and $-1$ may be
performed on such expressions, with the usual rules for carrying and
borrowing bits; and one never encounters an infinite sequence of
carries or borrows. (Abstractly, we are working on the coset $\Z+1/3$
in $\Z_2$.)

This may be realized by making the sequencs $s_{u,\nwarrow}$ etc.\ slightly more complicated: we choose
\[s_{u,\nwarrow}=(01)^\N,\quad s_{u,\nearrow}=(10)^\N,\quad s_{u,\swarrow}=(01)^{-\N},\quad s_{u,\searrow}=(10)^{-\N}.
\]
We omit the details of the construction of corresponding
$\Omega_\leftrightarrow'$ and
$\Omega'\subset\Omega_\leftrightarrow'\times\Omega_{\aquarius}$, which
is only slightly more complicated.

\section{Diestel-Leader graphs}\label{ss:DL}
The lamplighter group can be seen as a special case of a \emph{Diestel-Leader graph}.
As in~\S\ref{ss:cayleyLL}, consider $p,q\ge2$ and two trees
$\mathscr T_p$ and $\mathscr T_q$, respectively $(p+1)$-regular and
$(q+1)$-regular, and endow each with a Busemann function. Let $\DL$
denote their horocyclic product. This is a $(p+q)$-regular graph,
endowed with a graph morphism $h\colon\DL\to\Z$; each vertex
$v=(v_1,v_2)$ at height $n$ has $p$ neighbours $(v'_1,v'_2)$ at height
$n+1$ with $v'_1$ one of the $p$ successors of $v_1$ in $\mathscr T_p$
and $v'_2$ the unique ancestor of $v_2$ in $\mathscr T_q$; and
symmetrically $q$ neighbours at height $n-1$. The remarkable discovery
of Diestel and Leader is that when $p \neq q$, this is a vertex-transitive
graph but is not a Cayley graph: the automorphism group of $\DL$ acts
transitively on vertices, but does not contain a subgroup acting simply
transitively (= with trivial stabilizers).

Vertices of $\DL$ may also be described by sequences as
in~\S\ref{ss:ll}: these are sequences with a marker at an integer
position $n$, elements of $\{0,\dots,q-1\}$ at all half-integer
positions $<n$, almost all $0$, and elements of $\{0,\dots,p-1\}$ at
all half-integer positions $>n$, also almost all $0$.

There is also a notion of tetrahedron for these graphs:
for choices of sequences $u\in\{0,\dots,q-1\}^*$ and
$v\in\{0,\dots,p-1\}^*$, the associated tetrahedron has $p+q$ vertices
`$u\markout i v$' and `$u j\markout v$' for all $i\in\{0,\dots,p-1\}$
and all $j\in\{0,\dots,q-1\}$, and has $p q$ edges between them in a
complete bipartite graph.

We consider $\DL$ with a natural labelling: Every edge in $\DL$
is labelled by $\{0,\dots,p-1\}\times\{0,\dots,q-1\}$: it joins two sequences in
which the marker positions differ by $1$, say `$\cdots\markout i\cdots$'
and `$\cdots j\markout\cdots$', and has label $(i,j)$. There are $p q$
different kinds of vertices, depending on the symbols in the sequence
immediately left and right of the marker; so there are $p q$ different
kinds of immediate neighbourhoods that should be specified in a vertex
SFT.

We note that while this labelling is natural, this is not the
labelling used for the lamplighter group, even when $p = q = 2$.
Indeed, the labelling is not vertex-transitive.



All the constructions from this paper work \emph{mutatis mutandis}
for subshifts of finite type on these labelled Diestel-Leader graphs.

The comb, ray, and sea level SFTs adapt easily to labelled $\DL$
graphs: for instance, since $\DL$ contains $\DL[2,2]$ as the subgraph
spanned by the edges $\{0,1\}\times\{0,1\}$: the tiles can be extended
from $\\DL[2,2]$ to $\DL$ by simply ignoring the colours on the extra
edges. On the other hand, from tiling systems on the lamplighter group
we obtain ones on labelled $\\DL[2,2]$ by observing that mapping edge
labels by $(0,0),(1,1) \mapsto a$ and $(0,1),(1,0) \mapsto b$ maps
$\\DL[2,2]$ onto the lamplighter group.

In the case of the sea level, there is also a natural direct
construction on the Diestel Leader graph $\DL$: There is no
`$\langle a\rangle$' subgroup, but it may be represented by the ray
marked $(0,0)$ through any given seed vertex and corresponds to the
all-off lamp configurations with arbitrary marker position. There is
no `$\ker(\phi)$' sea level subgroup, but it corresponds to the lamp
configurations with marker at $0$. For example, the ``ray subshift''
$\Pi_\leftarrow$ becomes
\begin{equation}
\Pi_\leftarrow=\left\{(\alpha_1,\dots,\alpha_p,\beta_1,\dots,\beta_q)\in \mathbb B^{p+q}:
    \begin{aligned}
      \bigvee\alpha_i&\Longrightarrow\bigwedge\beta_j\\
      \bigvee\beta_j&\Longrightarrow\exists!i:\alpha_i
    \end{aligned}\right\}.
    \label{eq:DLRay}
\end{equation}

\begin{thm}\label{thm:DL}
  For all $p,q \geq 2$, labelled $\DL$ has undecidable seeded tiling
  problem.
\end{thm}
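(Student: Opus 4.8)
The plan is to transport the ``sea level'' construction of Section~\ref{ss:sea} wholesale to the labelled Diestel--Leader graph $\DL$, with base-$2$ arithmetic replaced by base-$p$ arithmetic on one side of the marker and base-$q$ arithmetic on the other. Concretely: (1) build a finite-type SFT $\Omega$ on $\DL$ that, once the identity vertex is constrained, admits a unique configuration $\eta$; (2) exhibit a finite $(\gfA,\gfB)$-simulator $\gfS$ realising $\DL$-marked-by-$\eta$ as a simulation of $\N\times\N$ (or, after the modification of~\S\ref{ss:QuadToPlane}, of $\Z^2$); (3) conclude as in Corollary~\ref{cor:firstproof}, applying Theorem~\ref{thm:simulfamily} with $\Gamma$ the (essentially singleton) family of $\DL$-graphs labelled by $\Omega$-configurations seeded at the identity and $\Delta=\{\Z^2\}$ (using Example~\ref{ex:simquadrant} and Lemma~\ref{lem:simultrans} to pass from $\N^2$ to $\Z^2$), together with the unsolvability of the seeded tiling problem of $\Z^2$. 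Since $\Omega$ is itself of finite type and $\eta$ is its only seeded configuration, deciding the seeded tiling problem of $\DL$ would decide that of $\DL$-marked-by-$\eta$ (fold the rules of $\Omega$ and the seed into the instance), so undecidability of the latter yields undecidability of the former, exactly as in the summary proof of Theorem~\ref{thm:main}.

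For step~(1), the ray SFT $\Pi_\leftarrow$ on $\DL$ is already written down in~\eqref{eq:DLRay}; together with its mirror $\Pi_\rightarrow$ it satisfies the evident analogue of Lemma~\ref{lem:XLCharacterization}, where the ``$\langle a\rangle$-ray'' is replaced by the ray of all-off lamp configurations through the seed, and hence the product statement Lemma~\ref{lem:aUnique}, characterising $\Omega_\leftrightarrow$ on $\DL$ as the orbit closure of one configuration with a unique seeded point. The sea-level SFT $\Omega_{\aquarius}$ adapts with an enlarged direction alphabet: $p$ upward branch symbols plus a neutral $\uparrow$, and $q$ downward branch symbols plus a neutral $\downarrow$; the rules~(\aquarius.1)--(\aquarius.6) are replaced by their $K_{p,q}$-tetrahedron analogues ($\aquarius$ propagates horizontally; above an $\aquarius$ exactly one of the $p$ upward directions is singled out, forcing a $p$-ary subtree whose branches follow boundary words over $\{0,\dots,p-1\}$; dually below, with words over $\{0,\dots,q-1\}$). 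The analogue of Lemma~\ref{lem:StarGivesBinaryTrees} is then proved by the same Sudoku-style induction, now keeping track of $p$ (resp.\ $q$) distinct branch symbols, and the analogue of Lemma~\ref{lem:omega} shows that superposing $\Omega_\leftrightarrow$ kills all remaining freedom, forcing the trees into the canonical digit-reading direction and leaving a single seeded configuration $\eta$.

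For step~(2): a height-$0$ vertex of $\DL$ is a word $u\markinout v$ with $u$ over $\{0,\dots,q-1\}$ and $v$ over $\{0,\dots,p-1\}$ (each almost all $0$); reading $v$ as a base-$p$ integer $x$ and $u$ as a base-$q$ integer $y$ identifies it with $(x,y)\in\N^2$. The canonical $p$-ary tree attached above the grid row of $y$, synchronised by $\Omega$, reads off the base-$p$ digits of $x$, so the map $x\mapsto x+1$ (addition with carry in base $p$) is performed by a graph-walking automaton that ascends this tree skipping maximal digits $p{-}1$, increments the first smaller digit, and descends restoring those digits to $0$ --- the round trip appending and then consuming a fixed left-digit so as to leave $y$ untouched; symmetrically $y\mapsto y+1$ in base $q$ below, and the reversed automata give $x\mapsto x-1$, $y\mapsto y-1$. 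By Lemma~\ref{lem:gwa} this is a finite simulator of $\N^2$ inside $\DL$; as in Proposition~\ref{prop:seasimul} one takes finitely many copies to absorb the $\{\true,\false\}^2$ component of $\Omega$ and the quadrant vertex-labels of Example~\ref{ex:quadrant}, and --- for a direct $\Z^2$ rather than $\N^2$ simulation --- one replaces the boundary words $0^{\N},1^{\N},\dots$ by digit sequences confinal to an eventually periodic pattern avoiding stuck carries, as in~\S\ref{ss:QuadToPlane}.

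I expect the main difficulty to be purely combinatorial bookkeeping, not conceptual: verifying that the enlarged direction/digit alphabet of $\Omega_{\aquarius}$ still rigidly forces the $p$-ary (resp.\ $q$-ary) tree structure, that the synchronisation rules pin down a single seeded configuration (the delicate point being to rule out two mismatched branch symbols at adjacent positions, as in the proof of Lemma~\ref{lem:omega}), and that the simulated graph $\DL\rtimes\gfS$ is weakly \'etale so that Theorem~\ref{thm:simulfamily} applies --- this last point holds, as for $\Z^2$, because $\DL\rtimes\gfS$ is isomorphic to the \'etale graph $\N^2$ (or $\Z^2$), the digit-reading walk between any two grid vertices being unique.
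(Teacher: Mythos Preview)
Your proposal is correct and is precisely one of the two approaches the paper sketches for Theorem~\ref{thm:DL}: the direct adaptation of the sea-level construction to $\DL$ with base-$p$ and base-$q$ digit trees. The paper does not spell this out in more detail than you do; it even records the $\DL$-version of $\Pi_\leftarrow$ in~\eqref{eq:DLRay} just as you use it.

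The paper, however, foregrounds a shorter argument that sidesteps all the ``combinatorial bookkeeping'' you anticipate. The labelled graph $\DL$ contains labelled $\DL[2,2]$ as the full subgraph spanned by the edges with labels in $\{0,1\}\times\{0,1\}$, so any Wang tileset on $\DL[2,2]$ extends to $\DL$ by ignoring the colours on the extra edges. In turn, labelled $\DL[2,2]$ maps onto the Cayley graph of the lamplighter group by relabelling edges $(0,0),(1,1)\mapsto a$ and $(0,1),(1,0)\mapsto b$, so any tileset on $L$ pulls back to one on $\DL[2,2]$. Composing these two reductions transports the comb or sea-level construction on $L$ verbatim to $\DL$, with no need to redo Lemmas~\ref{lem:XLCharacterization}--\ref{lem:omega} or the simulator for general $p,q$. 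Your route buys a more ``native'' and efficient embedding of the grid (using all $p\cdot q$ edge colours rather than just four), at the cost of the bookkeeping you describe; the paper's reduction buys a one-line proof.
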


One may wonder if the rigidity of the labelling of $\DL$ makes its
tiling problems easy (to prove undecidable), but this does not seem to
be the case: the labelling is highly recurrent, so one cannot use the
non-rigidity of the tiling to force a (unique) seed to appear with
local rules, and we have not been able to solve the decidability of
unseeded tiling problem on these graphs either.

Note that, if $\DL$ is considered with the trivial labelling (we write
it $\gfD$ from now on to avoid confusion), then its seeded
tiling problem is decidable for uninteresting reasons in the
$\Hom$-formalism. To see this, let $\gfB=\{1,2\}\sqcup\{1,2\}^2$ be
the complete graph with self-loops on two vertices $1,2$, and let
$\beta\colon\gfD\to \gfB$ be the sunny-side-up labelling marking the
origin with $1$. We claim that, given a finite graph
$\gfF \in \Graph_{/\gfB}$, it is decidable whether
$\Graph_{/\gfB}(\gfD, \gfF) = \emptyset$.  Indeed, let $\gfH$ be the
graph with vertex set $\{1,2,3\}$, edges both ways between $i$ and
$i+1$, and labelling induced by $\beta(1) = 1$,
$\beta(2) = \beta(3) = 2$. Then
$\Graph_{/\gfB}(\gfD, \gfF) = \emptyset$ if and only if
$\Graph_{/\gfB}(\gfH, \gfF) = \emptyset$: given
$\phi\colon\gfH\to\gfF$, lift it to $\gfD$ by mapping the origin to
$1$, all other vertices at even height to $3$, and vertices at odd
height to $2$. Conversely, $\gfH$ is a subgraph of $\gfD$.

A better question is the following. Let $G=\operatorname{Aut}(\gfD)$
denote the automorphism group of the unlabelled graph $\gfD =
\DL$. For a finite alphabet $A$, consider the set $A^\gfD$ of maps
$V(\gfD) \sqcup E(\gfD) \to A$, with the natural action of $G$ by
precomposition. A subset of $A^\gfD$ is called an \emph{SFT} if it is
of the form
$\{\eta \in A^\gfD:\; \forall g \in G: \eta\circ g \in C\}$ for some
clopen subset $C \subset A^\gfD$. One defines seeded SFTs as in
Definition~\ref{defn:SeededVertexSFT}, by conditioning on a sunny-side-up.

\begin{conj}\label{conj:DLSeeded}
  The unlabelled graph $\DL$ has undecidable seeded tiling problem.
\end{conj}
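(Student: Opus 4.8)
The plan is to \emph{reduce the seeded tiling problem of the labelled graph $\DL$}, which is undecidable by Theorem~\ref{thm:DL}, \emph{to the seeded tiling problem of the unlabelled graph $\gfD=\DL$}. The difficulty is that an SFT on $\gfD$ (with rules invariant under $G=\operatorname{Aut}(\gfD)$) is not handed the coordinate labelling of $\DL$ for free; so the core of the argument is to \emph{reconstruct} that labelling from the bare graph and the seed, using finitely many local (i.e.\ $G$-invariant) rules, after which any SFT written for labelled $\DL$ can be run on top of the reconstruction.

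First I would build an \emph{orientation layer}: an SFT on $\gfD$ whose configurations label each edge ``up'' or ``down'' consistently with a height function $\gfD\to\Z$ (defined up to translation). When $p\neq q$ this layer is vacuous, since at every vertex the $p$ up-edges and the $q$ down-edges are distinguished by a degree count; when $p=q$ one writes local rules forcing every vertex to look like a height-function vertex, and checks that $\gfD$ admits no locally-consistent but globally-inconsistent orientation --- the tree factors leave no room for one. For $p=q$ the two height functions related by the Busemann flip give isomorphic \emph{labelled} graphs (the flip exchanges the two tree factors and hence the two label coordinates), so the orientation may be left unpinned. On top of this I would build the \emph{labelling layer}: an SFT forcing, at each vertex $v$, that the $p$ up-edges carry labels $(0,j),(1,j),\dots,(p{-}1,j)$ for a common $j\in\{0,\dots,q{-}1\}$, the $q$ down-edges carry $(i,0),(i,1),\dots,(i,q{-}1)$ for a common $i\in\{0,\dots,p{-}1\}$, together with the obvious edge-compatibility (all down-neighbours of $w$ agree on which up-edge of theirs hits $w$, and dually). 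The natural labelling of $\DL$ satisfies these rules, so the layer is non-empty.

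\textbf{The crux of the proof --- and the step I expect to be the main obstacle --- is a rigidity lemma: every configuration of the labelling layer is the image under some element of $G$ (possibly composed with a Busemann shift, and with the flip when $p=q$) of the natural labelling of $\DL$.} Informally: on one regular tree a ``labelling'' is nothing more than a choice of an ordering of the children at each vertex, subject to no global constraint, and the automorphism group of the tree is transitive on such orderings; the work is to show that the local rules on $\gfD$ --- which only see bounded neighbourhoods and cannot mention the \emph{global} partition of $\gfD$ into $\mathscr T_p$- and $\mathscr T_q$-fibres --- nonetheless force the per-vertex data to assemble into a product of two such tree-labellings. This should follow from a careful analysis of the horocyclic-product geometry, propagating the constraints along geodesics and across the $4$-cycles (height-$1$ tetrahedra) of $\gfD$; but it is conceivable that an exotic locally-valid labelling of non-product type exists, in which case one would have to fall back on adapting the comb (\S\ref{ss:comb}) or sea-level (\S\ref{ss:sea}) constructions directly to $\gfD$ --- delicate precisely because those used the fixed labelling to pin down $\langle a\rangle$-cosets and the sea level, which the bare graph does not provide.

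Granting the rigidity lemma, the reduction is routine. Given a finite seeded instance $(\gfF',a_0)$ for labelled $\DL$ over its natural alphabet $\gfA$, form the product SFT $\gfF$ on $\gfD$ whose configurations are triples: orientation data, labelling-layer data, and an $\gfF'$-colour at every vertex matching the reconstructed $\gfA$-labels; the seed constraint at the vitellus forces the colour there to be $a_0$ and a bounded labelled neighbourhood of the vitellus to agree with that of $\DL$'s origin. By the rigidity lemma, a valid seeded $\gfF$-configuration on $\gfD$ exists if and only if an $a_0$-seeded $\gfF'$-tiling of labelled $\DL$ exists (one checks that this correspondence respects the origin, using that the undecidability in Theorem~\ref{thm:DL} is insensitive to the precise origin vertex among those of a given local type). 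Hence $\gfD$ has undecidable seeded tiling problem, which is the content of Conjecture~\ref{conj:DLSeeded}. A final sanity check: all the rules above refer only to the configuration on unlabelled balls of bounded radius, hence are $G$-invariant, and conditioning on the sunny-side-up at one vertex transmits no constraint beyond the stated seed.
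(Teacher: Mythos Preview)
This statement is presented in the paper as a \emph{conjecture}, not a theorem; there is no proof in the paper to compare against. (The \LaTeX\ source does contain a proof sketch, but it is wrapped in a \texttt{comment} environment --- the authors evidently did not regard it as complete and left the question open.) You are therefore attempting to settle something the paper explicitly does not claim.

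You have correctly located the gap in your own plan. The ``rigidity lemma'' --- that every configuration of your labelling layer is an automorphic image of the natural labelling --- is essentially the whole difficulty, and the rules you write down do not obviously force it. Your edge-compatibility condition (``all down-neighbours of $w$ agree on which up-edge of theirs hits $w$'') only synchronizes the first-coordinate label among the $q$ vertices sharing a common up-neighbour; two vertices $(v_1,v_2)$ and $(v_1,v_2')$ with the same $\mathscr T_p$-component but far-apart $\mathscr T_q$-components are not linked by any single tetrahedron, and showing that the first-coordinate label nonetheless depends only on $v_1$ is exactly the global propagation problem you yourself flag as possibly failing. Without that lemma the reduction to Theorem~\ref{thm:DL} does not go through.

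For what it is worth, the authors' suppressed sketch takes your \emph{fallback} route rather than your main one: it does not try to reconstruct the full $\{0,\dots,p-1\}\times\{0,\dots,q-1\}$ labelling at all, only the up/down orientation (easy, as you note), and then runs the sea-level construction of~\S\ref{ss:sea} directly on that. The observation is that the full edge labels enter~\S\ref{ss:sea} only through the rule $\alpha=(\top,\top)\Leftrightarrow\gamma=(\top,\top)$ in $\Omega_\leftrightarrow$, which pins the specific ray $\langle a\rangle$; dropping that rule, a seeded configuration still marks a unique bi-infinite ray (with no control over which one, necessarily so since all up-going rays are $G$-conjugate), and the $\N^2$-simulator of~\S\ref{ss:N2Simu} works on every such configuration. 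That line avoids your rigidity lemma entirely and looks more tractable --- though the authors' decision to comment it out suggests there are details to check there too.
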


\section{Electronic resources to manipulate SFTs on the lamplighter group}\label{ss:computer}
It is quite entertaining to experiment with SFTs on the lamplighter
group; we have written some simple code to help in such experiments.

The Julia module \texttt{LL.jl} should be loaded with
`\verb+include("LL.jl")+' in a recent Julia distribution, including
the packages \texttt{Makie} (for 3D visualization) and
\texttt{CryptoMiniSat} or \texttt{PicoSAT} (to compute tilings of
tetrahedra using a SAT solver). Elements of $L$ are displayed as
sequences over $\{0,1\}$, with an underline or overline between the
origin and the marker position: `$u\markin v\markout w$' is
represented as \texttt{u\underline v\relax w} and `$u\markout v\markin w$'
is represented as \makeatletter\texttt{u$\overline{\hbox{\texttt
      v}}\m@th$w}\makeatother. A sample run could be
\begin{Verbatim}[commandchars=\\\{\}]
julia> include("LL.jl")
julia> root = LL.Element(0,0,3)
\underline{000}
julia> seadict = LL.solve(LL.graph(6),sea,seed=[root=>1]);
julia> LL.walk(root,seaeast,seadict)
\underline{000}1
julia> LL.walk(ans,seanorth,seadict)
\underline{001}1
julia> LL.walk(ans,seawest,seadict)
\underline{001}
julia> LL.walk(ans,seasouth,seadict)
\underline{000}
\end{Verbatim}

\begin{bibsection}
  \begin{biblist}
    \bibselect{math}
  \end{biblist}
\end{bibsection}

\end{document}